\def\draft{n}
\documentclass[12pt]{amsart}
\usepackage[headings]{fullpage}
\usepackage{amssymb,epic,eepic,epsfig,amsbsy,amsmath,amscd,color}
\usepackage[all]{xy}
\usepackage{graphicx}
\usepackage{texdraw}
\usepackage{url}
\usepackage{bbm}
\usepackage{mathrsfs}

\def\printname#1{
        \if\draft y
                \smash{\makebox[0pt]{\hspace{-0.5in}
                        \raisebox{8pt}{\tt\tiny #1}}}
        \fi
}

\def\printname#1{
        \if\draft y
                \smash{\makebox[0pt]{\hspace{-0.5in}
                        \raisebox{8pt}{\tt\tiny #1}}}
        \fi
}

\newlength{\standardunitlength}
\setlength{\standardunitlength}{0.0125in}

\catcode`\@=11
\long\def\@makecaption#1#2{%
     \vskip 10pt

\setbox\@tempboxa\hbox{%\ifvoid\tinybox\else\box\tinybox\fi
       \small\sf{\bfcaptionfont #1. }\ignorespaces #2}%
     \ifdim \wd\@tempboxa >\captionwidth {%
         \rightskip=\@captionmargin\leftskip=\@captionmargin
         \unhbox\@tempboxa\par}%
       \else
         \hbox to\hsize{\hfil\box\@tempboxa\hfil}%
     \fi}
\font\bfcaptionfont=cmssbx10 scaled \magstephalf
\newdimen\@captionmargin\@captionmargin=2\parindent
\newdimen\captionwidth\captionwidth=\hsize
\catcode`\@=12

\def\lbl#1{\label{#1}\printname{#1}}

                        \theoremstyle{plain}

\newtheorem{theorem}{Theorem}[section]
\newtheorem{thm}{Theorem}
\newtheorem{lemma}[theorem]{Lemma}
\newtheorem{corollary}[theorem]{Corollary}
\newtheorem{proposition}[theorem]{Proposition}

\theoremstyle{definition}

\newtheorem{remark}[theorem]{Remark}

\def\BC{\mathbb C}
\def\BN{\mathbb N}
\def\BZ{\mathbb Z}

\def\la{\langle}
\def\ra{\rangle}

\def\tS{\tilde {\fS}}

\def\cS{\mathscr S}
\def\ot{\otimes}

\def\cE{\mathcal E}
\def\cF{\mathcal F}

\def\bk{\mathbf k}

\def\oD{{\mathring\Delta}}

\def\cP{\mathcal P}
\def\tD{\tilde D}

\def\fS{\mathfrak S}
\def\bfS{\overline{\fS}}

\def\D{\Delta}

\def\cY{\mathcal Y}
\def\ev{{\mathrm{ev}}}

\def\embed{\hookrightarrow}

\def\bbS{\overline \Sigma}

\def\cY{\mathcal Y}

\def\ev{{\mathrm{ev}}}

\def\embed{\hookrightarrow}

\def\pS{\partial \Sigma}
\def\btau{\bar \tau}
\def\ooS{\mathring {\cS}}
\def\Teich{Teichm\"uller }

\def\Tr{\mathrm{Tr}}

\DeclareMathOperator{\tr}{\mathrm tr}

\def\al{\alpha}
\def\ve{\varepsilon}
\def\be { \begin{equation} }
\def\ee { \end{equation} }

\newcommand\no[1]{}

      \def\nc{\newcommand}

                  \nc\FI[2]{\begin{figure}
    \begin{center}\input{#1.pstex_t}\end{center}
    \caption{#2}
    \lbl{#1}
  \end{figure}}
\nc\FIG[3]{\begin{figure}
    \includegraphics[#3]{#1.eps}
    \caption{#2}
    \lbl{fig:#1}
    \end{figure}}
\nc\FF[3]{\begin{figure}
    \includegraphics[#3]{#1.eps}
    \caption{#2}
    \lbl{#1}
    \end{figure}}
    \nc\FIGc[3]{\begin{figure}[htpb]
    \includegraphics[height=#3]{#1.eps}
    \caption{#2}
    \lbl{fig:#1}
    \end{figure}}

    \nc\FIGh[3]{\begin{figure}[htpb]
    \includegraphics[height=#3]{draws/#1.eps}
    \caption{#2}
    \lbl{fig:#1}
    \end{figure}}

\newcommand\incl[2]{{\includegraphics[height=#1]{#2.eps}}}

\def\leftve{\raisebox{-8pt}{\incl{.8 cm}{leftve}}}
\def\leftup{\raisebox{-8pt}{\incl{.8 cm}{left_up}}}
\def\loopup{\raisebox{-8pt}{\incl{.8 cm}{loop1}}}
\def\leftupp{\raisebox{-8pt}{\incl{.8 cm}{left_up_n}}}
\def\leftupPP{\raisebox{-8pt}{\incl{.8 cm}{left_upPP}}}
\def\leftupNN{\raisebox{-8pt}{\incl{.8 cm}{left_upNN}}}

\def\leftved{\raisebox{-8pt}{\incl{.8 cm}{left_ve_d}}}
\def\leftvetwist{\raisebox{-8pt}{\incl{.8 cm}{leftvetwist}}}
\def\leftvedl{\raisebox{-8pt}{\incl{.8 cm}{left_ve_dl}}}
\def\emptyr{\raisebox{-8pt}{\incl{.8 cm}{empty}}}
\def\emptys{\raisebox{-8pt}{\incl{.8 cm}{emptys}}}

\def\RthreeD{\left(  \raisebox{-10pt}{\incl{1 cm}{R3D}} \right) }
\def\RthreeDpositive{\left(  \raisebox{-10pt}{\incl{1 cm}{R3Dpositive}} \right) }
\def\RthreeDnegative{\left(  \raisebox{-10pt}{\incl{1 cm}{R3Dnegative}} \right) }
\def\RthreeDp{\left( \raisebox{-10pt}{\incl{1 cm}{R3Dp}}\right)}
\def\RthreeDpnegative{\left(  \raisebox{-10pt}{\incl{1 cm}{R3Dpnegative}} \right) }
\def\Rfoura{\left(  \raisebox{-10pt}{\incl{1 cm}{R4a}} \right) }
\def\Rfourb{\left(  \raisebox{-10pt}{\incl{1 cm}{R4b}} \right) }
\def\pfRtta{\left(  \raisebox{-10pt}{\incl{1 cm}{pfR23a}} \right) }
\def\pfRttb{\left(  \raisebox{-10pt}{\incl{1 cm}{pfR23b}} \right) }

\def\pfRttone{\left(  \raisebox{-10pt}{\incl{1 cm}{pfR231}} \right) }
\def\pfRtttwo{\left(  \raisebox{-10pt}{\incl{1 cm}{pfR232}} \right) }
\def\pfRttthree{\left(  \raisebox{-10pt}{\incl{1 cm}{pfR233}} \right) }
\def\pfRttfour{\left(  \raisebox{-10pt}{\incl{1 cm}{pfR234}} \right) }
\def\pfRttd{\left(  \raisebox{-10pt}{\incl{1 cm}{pfR23d}} \right) }
\def\pfRttdn{\left(  \raisebox{-10pt}{\incl{1 cm}{pfR23d1}} \right) }
\def\pfRtte{\left(  \raisebox{-10pt}{\incl{1 cm}{pfR23e}} \right) }
\def\ti{\tilde \rho}
\def\cross{  \raisebox{-8pt}{\incl{.8 cm}{cross}} }
\def\resoP{  \raisebox{-8pt}{\incl{.8 cm}{resoP}} }
\def\resoN{  \raisebox{-8pt}{\incl{.8 cm}{resoN}} }
\def\kinkp{  \raisebox{-8pt}{\incl{.8 cm}{kinkp}} }
\def\kinkn{  \raisebox{-8pt}{\incl{.8 cm}{kinkn}} }
\def\kinkzero{  \raisebox{-8pt}{\incl{.8 cm}{kinkzero}} }
\def\reordoneall{  \raisebox{-8pt}{\incl{.8 cm}{reord1all}} }
\def\reordonepn{  \raisebox{-8pt}{\incl{.8 cm}{reord1pn}} }
\def\reordtwopn{  \raisebox{-8pt}{\incl{.8 cm}{reord2pn}} }
\def\reordoneallp{  \raisebox{-8pt}{\incl{.8 cm}{reord1allp}} }
\def\reordnp{  \raisebox{-8pt}{\incl{.8 cm}{reordpn}} }
\def\reordpn{  \raisebox{-8pt}{\incl{.8 cm}{reord1}} }
\def\reordone{  \raisebox{-8pt}{\incl{.8 cm}{reord1}} }
\def\reordtwo{  \raisebox{-8pt}{\incl{.8 cm}{reord2}} }
\def\reordthree{  \raisebox{-8pt}{\incl{.8 cm}{reord3}} }
\def\trivloop{  \raisebox{-8pt}{\incl{.8 cm}{trivloop}} }
\def\caseonea{  \raisebox{-10pt}{\incl{1 cm}{case1a}} }
\def\caseoneb{  \raisebox{-10pt}{\incl{1 cm}{case1b}} }
\def\casetwoa{  \raisebox{-10pt}{\incl{1 cm}{case2a}} }
\def\casetwob{  \raisebox{-10pt}{\incl{1 cm}{case2b}} }
\def\casethreea{  \raisebox{-10pt}{\incl{1 cm}{case3a}} }
\def\casethreeb{  \raisebox{-10pt}{\incl{1 cm}{case3b}} }
\def\reordonez{  \raisebox{-8pt}{\incl{.8 cm}{reord1z}} }
\def\reordonea{  \raisebox{-8pt}{\incl{.8 cm}{reord1a}} }
\def\reordoneaa{  \raisebox{-8pt}{\incl{.8 cm}{reord1aa}} }
\def\reordoneb{  \raisebox{-8pt}{\incl{.8 cm}{reord1b}} }
\def\reordonec{  \raisebox{-8pt}{\incl{.8 cm}{reord1c}} }
\def\reordsixz{  \raisebox{-8pt}{\incl{.8 cm}{reord6z}} }
\def\reordsixa{  \raisebox{-8pt}{\incl{.8 cm}{reord6a}} }

\begin{document}

\title{Triangular decomposition of skein algebras}

\author[Thang  T. Q. L\^e]{Thang  T. Q. L\^e}
\address{School of Mathematics, 686 Cherry Street,
 Georgia Tech, Atlanta, GA 30332, USA}
\email{letu@math.gatech.edu}

\date{\today}

\thanks{Supported in part by National Science Foundation. \\
2010 {\em Mathematics Classification:} Primary 57N10. Secondary 57M25.\\
{\em Key words and phrases: Kauffman bracket skein module, Chebyshev homomorphism.}}

\begin{abstract}
By introducing a finer version of the Kauffman bracket skein algebra, we show how to decompose the Kauffman bracket skein algebra of a surface into elementary blocks corresponding to the triangles in an ideal triangulation of the surface. The new skein algebra of an ideal triangle has a simple presentation. This gives an  easy proof of the existence of the quantum trace map of Bonahon and Wong. We also explain the relation between our skein algebra and the one defined by Muller, and use it to show that the quantum trace map can be extended to the Muller skein algebra.
\end{abstract}

\maketitle

\def\pbbS{\partial \bbS}
\def\bSP{(\bbS,\cP)}
\def\cA{\mathcal A}
\def\cB{\mathcal B}
\def\Si{\Sigma}
\def\fB{\mathfrak B}
\def\pr{\mathrm{pr}}
\def\cO{\mathcal O}

\def\poS{\partial_0\Sigma}
\def\cSs{\cS_{\mathrm s}}
\def\cR{\mathcal R}
\def\basics{basic skein}
\def\YD{\cY(\D)}
\def\tYD{\tilde \cY(\D)}
\def\YtD{\cY^{(2)}(\D)}
\def\tYtD{\tilde \cY^{(2)}(\D)}
\def\bve{{\boldsymbol{\ve}}}
\def\bm{{\mathbf m}}
\def\hYeD{\tilde \cY^\ev(\D)}
\def\YeD{\cY^\ev(\D)}
\def\pfS{\partial \fS}
\def\pbfS{\partial \bfS}
\def\bove{\boldsymbol{\ve}}
\def\bomu{{\boldsymbol \mu}}
\def\bonu{{\boldsymbol \nu}}
\def\Gr{\mathrm{Gr}}
\def\bl{\mathbf{l}}
\def\onto{\twoheadrightarrow}
\def\Stink{\mathrm{St}^\uparrow(\bk)}
\def\Stinkp{\mathrm{St}^\uparrow(\bk')}
\def\sincr{s^\uparrow}
\def\St{\mathrm{St}}

\def\tF{\tilde {\cF}}
\def\tE{\tilde {\cE}}
\def\tfT{\tilde {\fT}}
\def\bcSs{\overline{\cS}_s}
\def\cN{\mathcal N}
\def\MN{(\bfS,\cP)}
\def\SMuller{\cS^{\mathrm{Muller}}}
 \def\cSsp{\cS_{s,+}}
   \def\TrD{\Tr_\D}
   \def\hTrD{\widehat{\Tr}_\D}
   \def\hcSs{\widehat{\cS}_s}

\section{Introduction}

\subsection{Kauffman bracket skein algebra of  surface
}

 Throughout this paper $\cR$ is a commutative ring with unit $1$ and a distinguished invertible element $q^{1/2} \in \cR$.

 Suppose $\fS= \bfS\setminus \cP$, where $\bfS$ is a compact oriented 2-dimensional manifold with (possibly empty) boundary $\pbfS$ and $\cP$ is a finite set.
The {\em Kauffman bracket skein algebra} $\ooS(\fS)$, introduced by Przytycki~\cite{Prz} and Turaev~\cite{Turaev}, is defined as the $\cR$-module spanned by isotopy classes of framed unoriented links in $\fS \times (0,1)$ modulo the skein relation \eqref{eq.skein0} and the trivial loop relation \eqref{eq.loop0}:
\begin{align}
\lbl{eq.skein0} \cross \ &= \ q\resoP + q^{-1} \resoN\\
\lbl{eq.loop0}  \trivloop\  &=\  (-q^2 -q^{-2})\emptyr
 \end{align}
 For a detailed explanation of these formulas,
 as well as other formulas and notions in the introduction,
 see Section \ref{sec.def}. There  is a natural product making $\ooS(\fS)$  an $\cR$-algebra, which has played an important role in low-dimensional topology and quantum topology. In particular, it is known that $\ooS(\fS)$ is a  quantization of the $SL_2(\BC)$-character variety of the fundamental group of $\fS$ along the Weil-Petersson-Goldman bracket \cite{Turaev,Bullock,PS1,BFK}. The algebra $\ooS(\fS)$ and its cousin defined for 3-manifolds have helped to establish the AJ conjecture, relating the Jones polynomial and the A-polynomial of  a knot, for a certain class of knots \cite{Le:AJ,LZ}. A construction of Topological Quantum Field Theory is based on $\ooS(\fS)$~\cite{BHMV}. Recently, $\ooS(\fS)$ is found to have  relations with quantum cluster algebras and quantum \Teich spaces \cite{BW1}, and we also discuss these relations in this paper.

 The skein algebra $\ooS(\fS)$ is defined using geometric objects in a 3-manifold, and we want to understand its algebraic aspects.

 \def\tal{\tilde \al}
 \def\fT{\mathfrak T}
\subsection{Decomposition}

%A connected component of the boundary $\pfS$ diffeomorphic to the open interval $(0,1)$ is called a {\em boundary edge} of $\fS$.
Assume that each connected component of the boundary $\pfS$ is diffeomorphic to the open interval $(0,1)$. Such a $\fS$ is called a {\em punctured bordered surface} in this paper.
Every connected component of $\pfS$ is called a {\em boundary edge} of $\fS$.

Very often $\fS$  has an {\em ideal triangulation}. This means,
$\fS$ can be obtained from a finite collection  of disjoint {\em ideal triangles} by gluing together  some pairs of edges of these triangles. Here an {\em ideal triangle} is a triangle without vertices. We want to know if one can build, or understand, the skein algebra of $\fS$ from those of the ideal triangles and the way they are glued together.  This is reduced to the question how the skein algebra behaves under gluing of boundary edges.

Suppose $a,b$ are distinct boundary edges of $\fS$. Let $\fS'$ be the result of gluing $a$ and $b$ together in such a way that the  orientation is compatible, ie  $\fS'= \fS/(a=b)$. We don't assume that $\fS$ is connected. It is clear that if we want to relate $\ooS(\fS')$ to $\ooS(\fS)$, we  have to enlarge the skein algebra to involve the boundary $\pfS$.

 By a {\em $\pfS$-tangle} we mean a compact, framed, one-dimensional proper submanifold $\al$ of $\fS \times (0,1)$ such that
 \begin{itemize}
 \item at every boundary point of $\al$ the framing is vertical, and
 \item for every boundary edge $e$, the points in
$\al \cap (e \times (0,1))$  have distinct heights,
 \end{itemize}
(see details in Section \ref{sec.def}). A {\em stated $\pfS$-tangle}  is a $\pfS$-tangle equipped with a map $s: \partial \al \to \{\pm \}$, called a state of $\al$.

We define the {\em stated skein algebra} $\cSs(\fS)$ to be the free $\cR$-module spanned by the isotopy classes of stated $\pfS$-tangles modulo the usual skein relation \eqref{eq.skein0}, the trivial loop relation \eqref{eq.loop0}, and the new boundary relations \eqref{eq.arcs0} and \eqref{eq.order0}, again see Section \ref{sec.def} for details.
\begin{align}
\lbl{eq.arcs0} \leftup\  & =\  q^{-1/2} \emptys\ , \qquad \leftupPP\ =0, \quad \  \leftupNN \ = 0   \\
 \lbl{eq.order0}   \reordone\ &=\  q^2 \reordtwo \ +\  q^{-1/2} \reordthree
 \end{align}

Let $\pr: \fS \onto \fS'$ be the natural projection, and $c = \pr(a)=\pr(b)$.
 Suppose $\al\subset (\fS' \times (0,1))$ is a stated $\pfS'$-tangle such that
 \begin{align}
 \lbl{eq.55}  &\text  {$\al$ is transversal to $c\times (0,1)$,}\\
\lbl{eq.55a}  &\text {the points in $\al \cap (c \times (0,1))$ have distinct heights and have vertical framing.}
 \end{align}
 Then  $\tal:=\pr^{-1}(\al) \subset \fS \times (0,1)$ is a $\pfS$-tangle and inherits states from $\al$ at all boundary points, except for those in $(a\cup b) \times (0,1)$.
For every $\bove: \al \cap (c \times (0,1) ) \to \{\pm\}$ let $\tal(\bove)$ be the stated $\pfS$-tangle whose states on $(a\cup b) \times (0,1)$ are the lift of $\bove$.

\begin{thm}\lbl{thm.I} Assume $\fS$ is punctured bordered surface, and  $\fS'= \fS/(a=b)$, where $a,b$ are boundary edges of  $\fS$. Let $\pr: \fS \onto \fS'$ be  the natural projection and $c=\pr(a)=\pr(b)$.

(a) There exists a unique $\cR$-algebra homomorphism $\rho :\cSs(\fS') \to \cSs(\fS)$ such that if  $\al$ is a
 $\pfS'$-tangle  satisfying \eqref{eq.55} and \eqref{eq.55a}, then
 $$ \rho(\al) = \sum_{\bove} \tal(\bove).$$
 Here the sum is over all maps $\bove: \al \cap (c \times (0,1))\to \{ \pm \}$.

 (b) In addition, $\rho$ is injective.

 (c) For any 4 distinct boundary edges $a_1,a_2,b_1,b_2$ of $\fS$, the following diagram is commutative:
\be
\lbl{eq.dia20}
 \begin{CD}   \cSs(\fS/(a_1=b_1,a_2=b_2))  @> \rho  >>  \cSs(\fS/(a_1=b_1)) \\
 @V
 \rho    VV  @V V \rho   V    \\
  \cSs(\fS/(a_2=b_2) )   @> \rho  >>    \cSs(\fS) .
\end{CD}
\ee
\end{thm}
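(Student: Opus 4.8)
The plan is to prove the three parts roughly in the order stated, with part (a) being the technical heart of the matter. For part (a), the strategy is the standard one for defining maps out of a skein algebra by a formula on diagrams: define $\rho$ first on the free $\cR$-module spanned by $\pfS'$-tangles in ``good position'' (i.e.\ satisfying \eqref{eq.55} and \eqref{eq.55a}) by the stated formula $\rho(\al)=\sum_{\bove}\tal(\bove)$, then check that (i) every isotopy class has a representative in good position, (ii) any two good-position representatives of the same isotopy class give the same element of $\cSs(\fS)$, and (iii) the skein relation \eqref{eq.skein0}, the loop relation \eqref{eq.loop0}, and the boundary relations \eqref{eq.arcs0}, \eqref{eq.order0} are all respected. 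For (iii) one observes that each defining relation of $\cSs(\fS')$ takes place in a small ball (or near a small arc of a boundary edge) that can be taken disjoint from $c\times(0,1)$; then $\pr^{-1}$ carries the relation to the same relation in $\fS$, and the summation over $\bove$ is applied termwise to both sides. Multiplicativity of $\rho$ follows because the product in $\cSs$ is given by vertical stacking, which commutes with both $\pr^{-1}$ and the summation over states on $c$; one only needs to check that the stacked tangle can still be isotoped to good position, which is the same height-ordering argument as for a single tangle. The factor $q^{-1/2}$ hidden in relations \eqref{eq.arcs0} and \eqref{eq.order0} is where I expect the only genuine subtlety: one must verify that summing over the two states $\pm$ at each point of $\al\cap(c\times(0,1))$ is exactly what makes the ``positive/negative height-exchange'' relation \eqref{eq.order0} and the bigon relation \eqref{eq.arcs0} consistent after pullback, and this is precisely the computation that the boundary relations were designed to make work — I would lean on the presentation/normalization established in Section~\ref{sec.def} rather than redo it.

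For part (b), injectivity, the natural approach is to construct a one-sided inverse, or at least a ``splitting'' on a spanning set. Concretely, $\cSs(\fS)$ has a basis given by stated $\pfS$-tangles in a suitable normal form (ordered heights on each boundary edge, no trivial or returning components), and $\rho$ sends a good-position basis element of $\cSs(\fS')$ to a signed/weighted sum of such; one shows this assignment is ``upper triangular'' with invertible diagonal with respect to a filtration by the number of intersection points with $c\times(0,1)$, hence injective. Alternatively — and this is likely the cleaner route the author has in mind — one defines a left inverse $\tau:\cSs(\fS)\to\cSs(\fS')$ on diagrams already transverse to $c$ by simply ``cutting along $c$'' and keeping only the summand whose two states at each cut point agree with a fixed convention, and checks $\tau\circ\rho=\Id$ by inspecting what happens to $\sum_{\bove}\tal(\bove)$. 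Establishing that $\tau$ is well-defined (respects all relations) is itself work of the same flavor as part (a), so I would set it up as a lemma.

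Part (c) is then essentially formal. Both composite maps in \eqref{eq.dia20} are $\cR$-algebra homomorphisms, so by the uniqueness clause in part (a) it suffices to check they agree on the spanning set of good-position $\pfS/(a_1=b_1,a_2=b_2)$-tangles $\al$ — i.e.\ tangles transverse to both $c_1\times(0,1)$ and $c_2\times(0,1)$ with distinct heights and vertical framing there. For such an $\al$, chasing either way around the square yields $\sum_{\bove_1,\bove_2}\tilde\al(\bove_1,\bove_2)$, where $\bove_i$ ranges over states on $\al\cap(c_i\times(0,1))$; the two iterated sums are equal because $a_1,a_2,b_1,b_2$ are four \emph{distinct} edges, so the two gluings are performed on disjoint parts of the boundary and the pullbacks and summations commute. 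The only thing to verify is that a common good-position representative for both gluings exists, which follows from transversality being a generic condition one can arrange for the two disjoint edges independently. I expect the main obstacle of the whole theorem to be concentrated in the well-definedness arguments of parts (a) and (b) — specifically reconciling the state-sum with relations \eqref{eq.arcs0}--\eqref{eq.order0} — while part (c) is a short diagram chase built on the uniqueness in (a).
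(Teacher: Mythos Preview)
Your plan for parts (a) and (c) matches the paper's approach closely. For (a), the paper organizes your step (ii) --- showing that isotopic good-position representatives give the same element --- by passing to diagrams and listing the extra Reidemeister-type moves that involve the curve $c$: a bigon move IIa (two strands of the diagram cross $c$ and come back), a triangle move IIIa (a crossing passes through $c$), and a height-reordering move IV on $c$. The only nontrivial check is IIa, carried out directly using relations \eqref{eq.arcs}, \eqref{eq.order}, and their consequence Lemma~\ref{r.arcs}; moves IIIa and IV are then reduced to IIa via the skein relation. Your intuition that this is ``precisely the computation that the boundary relations were designed to make work'' is right, but the paper performs this computation in the proof itself rather than citing Section~\ref{sec.def}. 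Part (c) is, as you say, a formal diagram chase.

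For part (b) the paper takes your first route, not the second, and with a sharper tool than the filtration you name. No left inverse $\tau$ is constructed; in fact your sketch of $\tau$ does not yield a well-defined map on $\cSs(\fS)$ (keeping only a fixed state pattern on $a$ will not respect relation~\eqref{eq.order}, and keeping all matching-state summands returns $2^k\al$ rather than $\al$). Instead the paper combines the explicit basis $B(\ori;\fS)$ of Theorem~\ref{thm.basis1a} with the $\BZ$-grading $\{G_{a,k}\}$ of Section~\ref{sec.grading} by the state-sum on the single edge $a$. For a $c$-normal simple diagram $D\in B(\ori';\fS')$, the $a$-leading term of $\rho(D)$ is exactly the all-$+$ lift $\tilde D(+)\in B(\ori;\fS)$, and the map $D\mapsto \tilde D(+)$ is injective. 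One then stratifies a given $0\neq x=\sum c_jD_j$ by the geometric intersection number $\mu(D_j,c)$ and reads off $\ld_a(\rho(x))=\sum_{\mu(D_j,c)=k}c_j\tilde D_j(+)\neq 0$. Your ``upper-triangular with respect to intersection number'' idea is the coarse half of this argument; the missing fine ingredient is the $a$-grading, which is what singles out a \emph{unique} surviving basis element among the $2^k$ lifts of each $D_j$.
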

Theorem \ref{thm.I} is proved in Section \ref{sec.decomposing}.

If we use only the skein relation \eqref{eq.skein0} and the trivial loop relation \eqref{eq.loop0} in the definition of $\cSs(\fS)$, then we get a bigger algebra $\hcSs(\fS)$, which was first introduced by Bonahon and Wong in their work \cite{BW1} on the quantum trace map. Relation \eqref{eq.arcs0} was also implicitly given in \cite{BW1}, but Relation \eqref{eq.order0} is new. It is this new relation  \eqref{eq.order0} which is responsible  for the existence of the decomposition map $\rho: \cSs(\fS') \to \cSs(\fS)$ of Theorem \ref{thm.I}.

We will show $\ooS(\fS)$ embeds naturally into $\cSs(\fS)$, which follows from the consistency of the defining relations  (see Theorem \ref{thm.basis} and Section \ref{sec.def}).
If we want the consistency and the well-definedness of decomposition map, then the coefficients on the right hand side of relations \eqref{eq.arcs0} and \eqref{eq.order0} are uniquely determined up to certain symmetries, see Section \ref{sec.uniq}. The uniqueness makes the the definition of our skein algebra more or less canonical.

Concerning the structure of $\cSs(\fS)$, we also have the following, whose proof is given in Section \ref{sec.zero}.
\begin{thm}
\lbl{thm.zero}
Suppose the ground ring $\cR$ is a domain, and $\fS$ is a punctured bordered surface.
Then $\cSs(\fS)$ is a domain, i.e. if  $xy=0$ and $x,y\in \cSs(\fS)$, then $x=0$ or $y=0$.
\end{thm}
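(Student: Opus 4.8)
The plan is to exhibit a filtration of $\cSs(\fS)$ whose associated graded is a (twisted) monomial algebra, and then invoke the standard fact that if the associated graded of a filtered algebra is a domain, so is the algebra itself. The key input is the basis of $\cSs(\fS)$ produced by Theorem~\ref{thm.basis}: stated $\pfS$-tangles in a suitable ``increasingly ordered'' form give an $\cR$-basis, and one needs a total order on this basis that is (weakly) multiplicative, i.e.\ such that the product of two basis elements is a linear combination of basis elements, with a unique leading term that multiplies correctly. A natural candidate is to grade by a combination of the geometric intersection data with the boundary edges together with an ordering of the heights at each boundary edge; relation \eqref{eq.order0} is precisely a ``height-reordering at cost $q^2$'' relation, so on the associated graded the reordering becomes free (the correction term $q^{-1/2}\reordthree$ drops to lower order), and relations \eqref{eq.arcs0} either kill a generator or rescale it. I expect this to identify $\Gr\,\cSs(\fS)$ with a quantum-torus-like / skew monomial algebra over $\cR$, which is a domain because $\cR$ is a domain and the leading coefficients are powers of $q^{1/2}$, hence units.

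First I would fix an ideal triangulation (or at least a system of boundary-parallel arcs and a decomposition of $\fS$) and set up the degree function: to a stated tangle diagram assign, say, the multiset of geometric intersection numbers with the edges of the triangulation, refined lexicographically by the ordered heights of endpoints along each boundary edge. Second, I would check that this degree is (sub)additive under the algebra product — stacking two tangles and pushing to a reduced form can only decrease complexity via \eqref{eq.skein0}, \eqref{eq.loop0}, \eqref{eq.arcs0}, \eqref{eq.order0}, and the top-degree part of a product of two basis elements is again a basis element (up to a unit), since resolving crossings and reordering heights produces a unique ``maximal'' diagram. Third, I would conclude that $\Gr\,\cSs(\fS)$ is spanned by the images of the basis with no relations beyond the twisted-commutation rules, hence is a domain; and finally transfer this back: if $xy=0$ with $x,y\neq 0$, comparing leading terms in $\Gr$ gives a product of nonzero homogeneous elements equal to zero, a contradiction.

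The main obstacle will be the second step: proving that the chosen degree is genuinely multiplicative with a well-behaved leading term, i.e.\ that no unexpected cancellation among top-degree terms occurs when one simplifies a product of two basis diagrams. This requires a careful analysis of how the defining relations interact with the stacking product near the boundary edges — in particular checking that the height-reordering relation \eqref{eq.order0} and the arc relations \eqref{eq.arcs0} only ever contribute strictly lower-degree correction terms, and that the ``highest'' resolution of all crossings in the product is itself reduced (no further application of \eqref{eq.arcs0} lowers it). One convenient way to organize this is to first treat the case of an ideal triangle, where $\cSs(\D)$ has the explicit presentation mentioned in the abstract and one can verify the domain property (or even identify $\cSs(\D)$ with a concrete quantum torus / Ore domain) by hand, and then use the injectivity and compatibility of the decomposition maps $\rho$ from Theorem~\ref{thm.I}(b),(c) to embed $\cSs(\fS)$ into a tensor-product-like algebra built from triangle pieces; a subalgebra of a domain is a domain, so this would finish the proof. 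I expect the filtration argument and the triangle-building argument to be essentially two routes to the same conclusion, and I would present whichever makes the leading-term bookkeeping cleanest.
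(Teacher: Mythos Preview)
Your proposal is correct and your second route---verify the domain property for the ideal triangle via a leading-term/compatibly-ordered-basis argument, then embed $\cSs(\fS)$ into $\bigotimes_{\fT\in\tF}\cSs(\fT)$ via the injective map $\rho_\D$ of Theorem~\ref{thm.I}(b)---is exactly what the paper does; the direct filtration argument you sketch as Route~1 is precisely what the paper carries out for the triangle (and bigon) but not for general $\fS$. Two small points to watch: you must treat the few non-triangulable surfaces (sphere minus one or two points, monogon, bigon) separately by hand, and you must check that $\bigotimes_{\fT}\cSs(\fT)$ is itself a domain, since a tensor product of domains over a domain need not be one in general---the paper handles this by extending the compatibly-ordered basis of a single $\cSs(\fT)$ to a compatibly-ordered basis of the tensor product.
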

When $\pfS=\emptyset$, one has $\cSs(\fS)=\ooS(\fS)$, the original skein algebra, and the above result had been known in this case, see \cite{PS2,CM,BW1,Muller}.

%If $q=1$, then $\cSs(\fS)$ is commutative. When $q=-1$,
\def\htrD{\widehat{\tr_\D}}
\subsection{Triangular decomposition} Suppose $\fS$  has an ideal triangulation $\D$, ie  $\fS$ can be obtained from a finite collection $\tF=\tF(\D)$ of disjoint  ideal triangles by gluing together  some pairs of edges of these triangles.
Choose an order of the gluing operations and apply Theorem \ref{thm.I} repeatedly, then we get  an algebra embedding
\be
\lbl{eq.tri0}
\rho_\D: \cSs(\fS) \embed \bigotimes_{\fT \in \tF} \cSs(\fT).
\ee
 Parts (b) and (c) of Theorem \ref{thm.I} show that $\rho_\D$ is injective, and does not depend on the order of gluing. The map $\rho_\D$, called a {\em triangular decomposition of $\cSs(\fS)$}, can be described explicitly by a state sum formula.

It is natural now to study the stated skein algebra of an ideal triangle $\fT$ as every representation of $\cSs(\fT)$ gives us a representation of the stated skein algebra.
  In Theorem \ref{r.present.fT}  we give an explicit presentation of the stated skein algebra of an ideal triangle, which has $12$ generators with a simple set of relations.

\subsection{Application: quantum trace map} To each triangulation $\D$ of $\fS$ there is associated the Chekhov-Fock algebra $\YD$, which is built from the Chekhov-Fock algebra $\cY(\fT)$ of the ideal triangle. Actually $\YD$ is a subalgebra of $\bigotimes_{\fT\in \tF} \cY(\fT)$, and
is a version of the multupiplicative Chekhov-Fock algebra studied in \cite{Liu,BW1,Hiatt}. Bonahon and Wong constructed a remarkable algebra map $\TrD:\hcSs\to \YD$, called the {\em quantum trace map}, which when $q=1$, is the classical trace map expressing the $PSL_2$-trace of a curve on the surface in terms of the Thurston shear coordinates of the Teichm\"uller space. The existence of the quantum trace map had been conjectured in \cite{Fock,CF2}. The construction of Bonahon and Wong is based on difficult calculations.

\def\vkD{\varkappa_\D}
As an application of our triangular decomposition,  we will show that the quantum trace map of Bonahon and Wong can be easily constructed using the triangular decomposition \eqref{eq.tri0} as follows. First, using the explicit presentation of $\cSs(\fT)$,  we construct an algebra homomorphism $\phi: \cSs(\fT) \to \cY(\fT)$. Then define $\vkD$ as the composition
\be
\vkD: \cSs(\fS) \overset {\rho_\D} \longrightarrow \bigotimes _{\fT \in \tF(\D)} \cSs(\fT) \overset {\otimes \phi} \longrightarrow \bigotimes _{\fT \in \tF(\D)} \cY(\fT) .
\ee

\begin{thm}\lbl{thm.II}
%(a) The image of $\TrD$ is in $\YD$.
The composition $\TrD: \hcSs(\fS) \to  \cSs(\fS) \overset {\vkD} \longrightarrow   \bigotimes_{\fT\in \tF} \cY(\fT)$ coincides with the quantum trace map of Bonahon and Wong.
\end{thm}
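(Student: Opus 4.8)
The plan is to verify that $\TrD = \vkD \circ (\text{inclusion } \hcSs(\fS) \to \cSs(\fS))$ by comparing both maps on a spanning set of $\hcSs(\fS)$, and then invoke the characterization of the Bonahon--Wong quantum trace map by its defining properties. First I would recall the state-sum formula for $\rho_\D$ coming from Theorem~\ref{thm.I}: for a stated $\pfS$-tangle $\al$ in good position with respect to the edges of $\D$, cutting along all interior edges writes $\rho_\D(\al)$ as a sum over states $\bove$ on $\al \cap (\text{edges})$ of the tensor product of the resulting pieces, each lying in some $\cSs(\fT)$. Composing with $\otimes\phi$ then expresses $\vkD(\al)$ as a state sum whose building blocks are the values $\phi$ takes on the elementary pieces (arcs) sitting inside a single triangle. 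So the first substantive step is to identify $\phi$ on those elementary arcs explicitly, using the presentation of $\cSs(\fT)$ from Theorem~\ref{r.present.fT}; this produces exactly the local matrices (the ``elementary'' contributions attached to each triangle and each way a curve traverses it) that appear in Bonahon--Wong's construction.

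The second step is to recall the key structural feature of the Bonahon--Wong quantum trace map: it is \emph{uniquely} determined by the requirements that (i) it is an $\cR$-algebra homomorphism $\hcSs(\fS)\to \YD \subset \bigotimes_{\fT}\cY(\fT)$, (ii) it is compatible with cutting the surface along an interior edge of $\D$ (the ``state-sum / gluing'' property relating $\TrD$ for $\fS$ to the $\Tr$ maps of the pieces), and (iii) it takes a prescribed value on a curve contained in a single triangle. This is the ``State Sum Property'' plus ``Elementary Case'' normalization in \cite{BW1}. Thus it suffices to check that $\vkD$ enjoys all three properties. Property (i) is immediate since $\vkD = (\otimes\phi)\circ\rho_\D$ is a composition of algebra homomorphisms, and its image lands in $\YD$ because the image of $\rho_\D$ consists of the $\otimes$-tensors coming from actual tangles, whose $\phi$-images are balanced monomials (one must check $\phi(\cSs(\fT))$ lands in $\cY(\fT)$ and that the tensor factors assemble into $\YD$ rather than the full $\bigotimes\cY(\fT)$ — this is where the edge-matching in the state sum is used). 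Property (ii) — compatibility with cutting — is exactly the commutativity of the diagram~\eqref{eq.dia20} in Theorem~\ref{thm.I}(c) together with the functoriality of $\otimes\phi$: cutting $\fS$ along one edge factors $\rho_\D$ through $\cSs$ of the partially cut surface, and $\otimes\phi$ respects this factorization. Property (iii) is the computation of $\vkD$ on a single-triangle curve, which by construction reduces to evaluating $\phi$, done in the first step; one checks it matches Bonahon--Wong's elementary value.

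The main obstacle I expect is Property (i)'s refinement, namely pinning down that the image of $\vkD$ is the Chekhov--Fock algebra $\YD$ on the nose. The subtlety is that $\YD$ is only a \emph{subalgebra} of $\bigotimes_{\fT}\cY(\fT)$, cut out by the edge-balancing relations gluing the variables of adjacent triangles, and one must show the $\phi$-images of glued tangle pieces automatically satisfy these relations — equivalently, that the state sum over an interior edge $e$ produces, for each pair of heights on $e$, matching powers of the edge variable attached to $e$ from the two sides. This is essentially a bookkeeping check on how $\phi$ assigns edge-variable exponents at the two endpoints of an arc crossing a triangle, but it is the place where a sign or power-of-$q$ error would be fatal, so it deserves care; concretely it amounts to verifying that $\phi$ is compatible with the two ``half-edge'' inclusions $\cY(e) \rightrightarrows \cY(\fT)$ for the triangles adjacent to $e$. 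Once this is settled, the uniqueness characterization of $\TrD$ closes the argument, and in particular the independence of $\rho_\D$ from the order of gluing (Theorem~\ref{thm.I}(b),(c)) guarantees $\vkD$, hence $\TrD$, is well-defined and triangulation-natural in the expected sense.
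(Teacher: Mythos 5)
Your proposal is correct and follows essentially the same route as the paper: the paper likewise invokes the characterization in \cite[Theorem~11]{BW1} of $\trD$ by its values on ideal triangles together with the state-sum/gluing property, checks the triangle case directly by comparing $\phi$ and \eqref{eq.arcs1}--\eqref{eq.qt} against Bonahon--Wong's elementary values, and obtains the gluing property from $\vkDp = \vkD\circ\rho$. The only friction is that you treat landing in $\YD$ as the main obstacle, but the theorem is stated with target $\bigotimes_{\fT}\cY(\fT)$ and the paper explicitly brackets the $\YD$-image question (``For now, we consider $\trD$ as a map with target $\cY(\tF)$''), handling it separately in Section~\ref{sec.YD}, so that step is not needed for this particular statement.
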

The proof is easy, and is given in Section \ref{sec.QT}.
In essence, we replace the difficult calculations in \cite{BW1} by explicit presentation of the  stated skein algebra of the ideal triangle.

For another approach to the quantum trace map using the Muller skein algebra see \cite{Le:QT}.

\subsection{Relation to Muller's skein algebra}  For a  {\em marked surface}, i.e. a pair $ (\bfS,\cP)$ where $\bfS$ is a compact oriented 2-dimensional manifold with (possibly empty) boundary $\pbfS$ and a finite set $\cP$ in the boundary $\pbfS$,
Muller \cite{Muller} defines the skein algebra $\SMuller(\bfS,\cP)$ using the tangles whose end points are in $\cP \times (0,1)$. See section \ref{sec.Muller} for details. The Muller skein algebra is closed related quantum cluster algebras of marked surfaces.

Let $\fS= \bfS \setminus (\cP\cup  \partial'(\bfS))$, where  $\partial'(\bfS)$ is the union of all connected components of $\partial(\bfS)$ not intersecting $\cP$. Then $\fS$ is a punctured bordered surface.
 In Section \ref{sec.Muller} we show that there is a natural  $\cR$-algebra isomorphism
    $$\Omega: \SMuller(\bfS, \cP) \overset{\cong }\longrightarrow \cSsp(\fS),$$
where $\cSsp(\fS)$ is the subalgebra of $\cSs(\fS)$ generated by stated $\pfS$-tangles whose states are $+$ only.
 \def\bvkD{\bar \varkappa_\D}
  Using the isomorphism $\Omega$, we can define the  quantum trace map on $\SMuller(\bfS,\cP)$
   $$ \bvkD: \SMuller(\bfS,\cP) \overset \Omega \longrightarrow  \cSsp(\fS)  \overset \vkD \longrightarrow  \YD$$
   for any triangulation $\D$ of $\fS$. The following is an extension of \cite[Proposition 29]{BW1}.
  \begin{thm} \lbl{thm.inj}
  The quantum trace map  $\bvkD:\SMuller(\bfS,\cP) \to \YD$ is injective.
  \end{thm}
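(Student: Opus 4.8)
The strategy is to reduce the injectivity of $\bvkD$ on $\SMuller(\bfS,\cP)$ to the injectivity of $\vkD$ on $\cSsp(\fS)$, and then to deduce the latter from the triangular decomposition. Since $\Omega:\SMuller(\bfS,\cP)\to\cSsp(\fS)$ is an isomorphism (proved in Section~\ref{sec.Muller}) and $\bvkD=\vkD\circ\Omega$, it suffices to show that the restriction of $\vkD$ to the subalgebra $\cSsp(\fS)\subseteq\cSs(\fS)$ is injective. Recall $\vkD=(\otimes\phi)\circ\rho_\D$, and by Theorem~\ref{thm.I}(b) (applied repeatedly, as in \eqref{eq.tri0}) the triangular decomposition $\rho_\D:\cSs(\fS)\embed\bigotimes_{\fT\in\tF}\cSs(\fT)$ is already injective. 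So the problem localizes to a single triangle: I would show that $\phi:\cSs(\fT)\to\cY(\fT)$ is injective on the positive-state subalgebra $\cSsp(\fT)$, and that this propagates through the tensor product in a way compatible with $\rho_\D$ carrying $\cSsp(\fS)$ into $\bigotimes_{\fT}\cSsp(\fT)$.

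First I would establish the triangle-level statement: $\phi|_{\cSsp(\fT)}$ is injective. For this I would use the explicit $12$-generator presentation of $\cSs(\fT)$ from Theorem~\ref{r.present.fT} together with an explicit basis of $\cSsp(\fT)$ as an $\cR$-module (the positive-state version of the basis in Theorem~\ref{thm.basis}, indexed by multicurves/arc systems with all states $+$). One computes $\phi$ on these basis elements and checks that their images in $\cY(\fT)$ — a quantum-torus-like algebra with an obvious monomial basis — are $\cR$-linearly independent, e.g. by exhibiting a leading-term/degree filtration on $\cY(\fT)$ under which distinct basis elements of $\cSsp(\fT)$ have distinct leading monomials. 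This is essentially the content behind Bonahon–Wong's \cite[Proposition~29]{BW1}, so the mechanics are available; the point is just to phrase it in the current language.

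Next I would globalize. The map $\rho_\D$ sends a stated $\pfS$-tangle to a state-sum of its lifts to $\bigsqcup_\fT\fT$; when all external states are $+$, every term $\tal(\bove)$ in the sum still lives in $\bigotimes_\fT\cSsp(\fT)$ only after summing over the internal states $\bove$ — but crucially $\rho_\D(\cSsp(\fS))$ lands in the image of $\bigotimes_\fT\cSsp(\fT)$ under the natural map, or more precisely in the subalgebra $\YD\subseteq\bigotimes_\fT\cY(\fT)$ after applying $\otimes\phi$. I would then argue injectivity by a leading-term argument on the whole tensor product: equip each $\cY(\fT)$ with its monomial filtration, take the induced filtration on $\bigotimes_\fT\cY(\fT)$, and show that $\vkD$ maps the positive-state basis of $\cSsp(\fS)$ to elements with pairwise-distinct leading monomials. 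The combinatorics of ``highest term'' here matches the Thurston-coordinate/tropical description of multicurves, which is exactly how the classical ($q=1$) trace map separates curves.

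The main obstacle is the globalization step: controlling the leading terms of $\vkD$ on $\cSsp(\fS)$ rather than merely on a single triangle, since the state-sum $\rho_\D(\al)=\sum_{\bove}\tal(\bove)$ can produce cancellation or term-collision across triangles, and one must verify that the dominant term survives. I expect to handle this by choosing the filtration on $\bigotimes_\fT\cY(\fT)$ compatibly with the gluing (so that the ``internal'' variables associated to edges glued in $\D$ are weighted to detect the geometric intersection numbers), and then showing that for each positive-stated multicurve in $\fS$ there is a unique internal-state assignment $\bove$ maximizing the degree, whose contribution is a nonzero monomial — precisely the tropical/shear-coordinate count. Everything else (well-definedness of $\Omega$, injectivity of $\rho_\D$, the presentation of $\cSs(\fT)$) is quoted from the earlier sections, so the proof is short modulo this one filtration bookkeeping.
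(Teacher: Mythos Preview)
Your final paragraph is correct and is precisely the paper's argument: one shows that for each $\D$-normal, positively ordered, simple $\pfS$-tangle diagram $D$ with all $+$ external states, the unique internal-state assignment maximizing the degree is the all-$+$ one, and its contribution to $\vkD(D)$ has leading monomial $y^{\bk_D}$ with $(\bk_D)_i=|D\cap e_i|$; since $\bk_D$ determines the isotopy class of $D$, distinct basis elements map to elements with distinct leading terms.

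However, the first half of your plan has a genuine gap. You propose to (i) prove $\phi|_{\cSsp(\fT)}$ is injective for each triangle, and (ii) feed this into the factorization $\vkD=(\otimes\phi)\circ\rho_\D$ together with the injectivity of $\rho_\D$. This does not compose: $\rho_\D$ does \emph{not} carry $\cSsp(\fS)$ into $\bigotimes_\fT\cSsp(\fT)$. When you cut along an interior edge, the new endpoints acquire both $+$ and $-$ states in the state sum, so $\rho_\D(\al)$ for $\al\in\cSsp(\fS)$ is a genuine sum of mixed-state elements of $\bigotimes_\fT\cSs(\fT)$. Knowing that $\otimes\phi$ is injective on the positive subalgebra of the tensor product tells you nothing about its behavior on this sum. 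Your sentence ``lands in the image of $\bigotimes_\fT\cSsp(\fT)$ under the natural map, or more precisely in the subalgebra $\YD$\dots after applying $\otimes\phi$'' conflates two different claims and does not rescue the argument.

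So drop the triangle-level step entirely; it is neither needed nor usable. Go directly to the global leading-term computation you outline at the end --- that is the whole proof, and it is short. The only ingredients are the state-sum formula for $\vkD$, formula~\eqref{eq.qt} (which shows each non-returning $+$-stated arc in a triangle contributes a single monomial and that $\phi$ vanishes on $\al(-,+)$-type arcs), and Lemma~\ref{r.tri3} to control the contribution of each term.
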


\subsection{Plan of paper} In Section \ref{sec.def} we give a detailed definition of the stated skein algebra $\cSs(\fS)$ of a punctured bordered surface $\fS$,  its symmetry, filtrations, and grading. We prove Theorem \ref{thm.basis} describing a natural  $\cR$-basis of $\cSs(\fS)$. The proof is a standard application of the diamond lemma.
In Section \ref{sec.decomposing} we prove Theorem \ref{thm.I}. In Section \ref{sec:triangle} we give a presentation of $\cSs(\fS)$ when $\fS$ is an ideal bigon or an ideal triangle, and prove Theorem \ref{thm.zero}. We prove Theorem \ref{thm.II} about the quantum trace map in Section \ref{sec.QT}. In Section \ref{sec.Muller} we discuss the Muller skein algebra and prove Theorem \ref{thm.inj}.

\subsection{Acknowledgement} Much of this work is inspired by the work of Bonahon and Wong \cite{BW1,BW2} on the quantum trace map. The author would like to thank  G.~Masbaum and V.~Turaev for helpful discussions.  The author is partially supported by an NSF grant.

\section{Punctured bordered  surfaces and skein algebras} \lbl{sec.def}
\subsection{Notations}
Throughout the paper let
$\BZ$ be the set of integers, $\BN$ the set of non-negative integers, $\BC$ the set of complex numbers.
The ground ring $\cR$  is a commutative ring with unit 1,  containing a invertible element $q^{1/2}$.
 For a finite set $X$ we denote by $|X|$ the number of elements of $X$.

\def\fT{\mathfrak T}
In this section we fix a
 {\em punctured bordered surface} $\fS$,  i.e. a surface obtained by removing a finite set $\cP$
from a compact oriented surface $\bfS $ with
(possibly empty) boundary $\pbfS$, with the assumption that every connected component of the boundary $\pbfS$ has at least one point in $\cP$. We don't require $\bfS $ be to connected.

Let $\pfS= \pbfS \setminus \cP$. A connected component of $\pfS$ is called a {\em boundary edge} of $\fS$. Every boundary edge is diffeomorphic to the open interval $(0,1)$.

\subsection{Tangles and height order} The boundary of the 3-manifold $\fS \times (0,1)$ is $\pfS \times (0,1)$. For a point $(z,t)\in \fS \times (0,1)$, $t$ is called its {\em height}. A vector at
 $(z,t)$ is called {\em vertical} if it is parallel to the $(0, 1)$
factor and points in the direction of 1. A 1-dimensional submanifold $\al$ of $\fS \times (0,1)$ is {\em framed} if it is equipped with a {\em framing},  i.e.
 a continuous choice of a vector transverse to $\al$ at each
point of $\al$.

In this paper, a {\em $\pfS$-tangle} is
an unoriented,
 framed, compact,  properly embedded 1-dimensional submanifold $\al \subset \fS \times (0,1)$ such that:
 \begin{itemize}
 \item  at every point of $\partial \al=\al \cap (\pfS \times (0,1))$ the framing is {\em vertical}, and
\item  for any boundary edge $b$, the points of $\partial_b(\al):=\partial \al \cap (b \times (0,1))$  have distinct~heights.
 \end{itemize}

Two $\pfS$-tangles are {\em isotopic} if they are isotopic in the class of $\pfS$-tangles. The emptyset, by convention, is a $\pfS$-tangle which is isotopic only to itself.

For a $\pfS$-tangle $\al$ define a partial order on $\partial(\al)$ by: $x>y$ if $x$ and $y$
are in the same boundary edge and $x$ has greater height. If $x>y$ and there is no $z$ such that $x>z>y$, we say
$x$ and $y$ are {\em consecutive}.
%This partial order restrict to a total order on $\partial_b(\al)$ for any boundary edge $b$.

\subsection{Tangle diagrams, boundary order, positive order}
As usual, $\pfS$-tangles are depicted by their diagrams on $\fS$, as follows.
Every $\pfS$-tangle is isotopic to one with vertical framing.
Suppose a vertically framed $\pfS$-tangle $\al$ is in general position with respect to
the standard projection $\pi : \fS \times (0,1) \to \fS$, i.e.  the restriction $\pi |_{\al}:\al \to \fS$ is an immersion with transversal double points as the only possible singularities and there are no double points on the boundary of $\fS$.
Then $D=\pi (\al)$, together with the over/underpassing information at every double point is called
 a {\em  $\pfS$-tangle  diagram}. {\em Isotopies} of (boundary ordered) $\pfS$-tangle diagrams are  ambient isotopies in $\fS$.

A $\pfS$-tangle diagram $D$ with a total order on each set $\partial_b(D):=D \cap b$, for all boundary edge $b$, is called a {\em boundary ordered $\pfS$-tangle diagram}. For example,
the partial order on $\partial(\al)$ induces such a  boundary order on $\partial D$.

Every boundary ordered $\pfS$-tangle diagram determines a unique isotopy
class of the $\pfS$-tangle, where the framing is vertical everywhere. When there is no confusion,
we identify a boundary ordered $\pfS$-tangle diagram with its isotopy class of $\pfS$-tangles.

\def\ori{{\mathfrak o}}
 Let $\ori$ be an {\em orientation} of $\pfS$,   which on a boundary edge may or may not be equal to the orientation inherited from $\fS$. The {\em $\ori$-order} of  a $\pfS$-tangle diagram $D$,  is the order in which points on $\partial_b(D)$ are increasing when going along the direction of $\ori$. It is clear that every $\pfS$-tangle, after an isotopy, can be presented by an {\em $\ori$-ordered $\pfS$-tangle diagram}, i.e. a $\pfS$-tangle diagram with $\ori$-order.

 If $\ori$ is  the orientation coming from $\fS$, the $\ori$-order is called the {\em positive order}. Every isotopy class of $\pfS$-tangles can be presented by a positively ordered  $\pfS$-tangle diagram.

\subsection{Framed Reidemeister moves}

Every isotopy class of $\pfS$-tangle can be presented by infinitely many boundary ordered  $\pfS$-tangle diagrams. Just like in the theory of framed links, two positively ordered $\pfS$-tangle diagrams  represent isotopic $\pfS$-tangles if and only if one can be obtained from the other by a sequence of moves, each is either an isotopy in $\fS$ or one of  the framed Reidemeister moves RI, RII, and RIII, described in Figure \ref{fig:Rmoves}.
\FIGc{Rmoves}{Framed Reidemeister moves RI, RII, and RIII}{1.5cm}

If we don't restrict to positive order, then two boundary ordered $\pfS$-tangle diagrams  represent isotopic $\pfS$-tangles if and only if one can be obtained from the other by a sequence of moves, each is either an isotopy in $\fS$, one of RI, RII, RIII, and the exchange move described in Figure \ref{fig:Emove}.
\FIGc{Emove}{Exchange move. Here the arrowed interval is a part of a boundary edge, and the order on that part is such that the point closer to the tip of the arrow is higher.
Besides, these two points are consecutive in the height order.
}{1.2cm}

%Unless otherwise stated, we equip a $\pfS$-tangle diagram with the positive boundary order.

\subsection{Stated skein module/algebra}% Suppose $\fS$ is a punctured bordered  surface.

A {\em stated $\pfS$-tangle $\al$} is a $\pfS$-tangle $\al$ equipped with a {\em state}, which is a function
 $s : \partial \al \to  \{+,-\}$. Similarly, {\em a stated $\pfS$-tangle diagram $D$} is an $\pfS$-tangle diagram $D$ equipped with  a state $s: \partial D \to \{ \pm \}$.

The {\em (Kauffman bracket) stated skein module}  $\cSs(\fS)$ is  the $\cR$-module freely spanned
 by isotopy classes of stated $\pfS$-tangles modulo the {\em defining relations}, which are the  skein relation~\eqref{eq.skein}, the trivial loop relation~\eqref{eq.loop}, and the boundary relations~\eqref{eq.arcs} and~\eqref{eq.order}:
\begin{align}
\lbl{eq.skein} \cross \ &= \ q\resoP + q^{-1} \resoN\\
\lbl{eq.loop}  \trivloop\  &=\  (-q^2 -q^{-2})\emptyr\\
\lbl{eq.arcs} \leftup\  & =\  q^{-1/2} \emptys\ , \qquad \leftupPP\ =0, \quad \  \leftupNN \ = 0   \\
 \lbl{eq.order}   \reordone\ &=\  q^2 \reordtwo \ +\  q^{-1/2} \reordthree
 \end{align}
Here is the convention about pictures in these identities, as well as in other identities in this paper. Each  shaded part is a  part of $\fS$, with a stated $\pfS$-tangle diagram on it.
%The boundary order is not supposed to be positive.
Each arrowed line is part of a boundary edge, and the order on that part is indicated by the arrow and the points on that part are consecutive in the height order. The order of other end points away from the picture can be arbitrary and are not determined by the arrows of the picture.

Relation \eqref{eq.skein} says that if 3 boundary ordered, stated $\pfS$-tangle diagrams $D_1, D_2, D_3$ are identical everywhere, except for a small disk in which $D_1,D_2, D_3$ are like in respectively the first, the second, and the third shaded areas, then $[D_1] = q[D_2] + q^{-1} [D_3]$ in the skein module $\cSs(\fS)$. Here $[D_i]$ is the isotopy class of the stated $\pfS$-tangle determined by $D_i$.
Other relations are interpreted similarly.

For two $\pfS$-tangles $\al_1$ and $\al_2$ the product $\al_1 \al_2$  is defined as the result of stacking $\al_1$ above $\al_2$. That is, first isotope $\al_1$ and $\al_2$ so that $\al_1 \subset \fS \times (1/2,1)$ and $\al_2 \subset \fS \times (0, 1/2)$. Then $\al_1 \al_2= \al_1 \cup \al_2$.
 It is easy to see that this gives rise to a well defined product and hence an $\cR$-algebra structure on $\cSs(\fS)$.

It is clear that if   $\fS_1$ and $\fS_2$ are two punctured bordered  surfaces, then
there is a natural isomorphism
\be
\cSs (\fS_1 \sqcup \fS_2) \cong \cSs (\fS_1) \ot_\cR
\cSs (\fS_2).
\ee

\begin{remark} \lbl{rem.hS}
If we don't impose the boundary relations \eqref{eq.arcs} and \eqref{eq.order}, then we get a bigger skein module $\hcSs(\fS)$, which was first introduced in  \cite{BW1}.
Of course $\cSs(\fS)$ is a quotient of $\hcSs(\fS)$. If $\partial \fS=\emptyset$, then $\cSs(\fS)=\hcSs(\fS)=\ooS(\fS)$.
\end{remark}

\begin{remark} \lbl{rem.hS2}
If $\fS$ is allowed to have a closed boundary component, then unless $q^2=1$,
the defining relations are not consistent and the  skein module $\cSs(\fS)$ is small.
\end{remark}

\def\starto{\overset \star \longrightarrow}

\subsection{Consequences of defining relations} Define $C^\ve_{\ve'}$ for $\ve, \ve'\in \{\pm \}$ by
\be
\lbl{eq.Cve}
C^+_+ = C^-_- = 0, \quad C^+_-= q^{-1/2}, \quad C^-_+ = -q^{-5/2}.
\ee
\begin{lemma} \lbl{r.arcs}
In $\cSs(\fS)$ one has
\begin{align}
\lbl{eq.kink} -q^{-3} \kinkp \ &= \ \kinkzero\  = \ -q^3\kinkn\\
\lbl{eq.arcs1} \leftve\   &=\  C^\ve _{\ve'}  \emptys\\
\lbl{eq.arcs2} \leftvedl \ & = \ \leftved\   =\  - q^{3} C^{\ve'} _{\ve}  \emptys
%\\ \quad \leftdownn\ &= \  -q^{5/2} \emptys, \quad \leftdownp\ = \  q^{1/2} \emptys,
\end{align}
%Here we don't assume that the diagrams have positive order, and the arrow indicates the order.
\end{lemma}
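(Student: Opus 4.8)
The plan is to derive the three families in the order stated, working in $\cSs(\fS)$ and using only the defining relations \eqref{eq.skein}, \eqref{eq.loop}, \eqref{eq.arcs}, \eqref{eq.order} (together with the earlier parts of the lemma, since \eqref{eq.arcs2} will rely on \eqref{eq.kink} and \eqref{eq.arcs1}). For the kink identities \eqref{eq.kink} I would resolve the unique self-crossing of $\kinkp$ by the Kauffman relation \eqref{eq.skein}: one smoothing produces $\kinkzero$ together with a small disjoint contractible loop, which by \eqref{eq.loop} contributes $(-q^{2}-q^{-2})\kinkzero$, while the other smoothing produces $\kinkzero$ itself. Combining the two with their $q^{\pm 1}$ weights gives $\kinkp=-q^{3}\,\kinkzero$, and the analogous computation gives $\kinkn=-q^{-3}\,\kinkzero$; rearranged, these are exactly \eqref{eq.kink}. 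This step is routine.

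For the returning-arc identity \eqref{eq.arcs1}, observe first that by the definition \eqref{eq.Cve} of $C^{\ve}_{\ve'}$ the cases $(\ve,\ve')\in\{(+,+),(-,-),(+,-)\}$ are literally the three identities in \eqref{eq.arcs} (with the nonzero one having value $q^{-1/2}$), so only the case $(\ve,\ve')=(-,+)$, where $C^{-}_{+}=-q^{-5/2}$, needs an argument. The idea is to place a small returning arc near the boundary edge and apply the height-reordering relation \eqref{eq.order} to its two endpoints, with the states arranged so that the left-hand side of \eqref{eq.order} becomes $\leftve$ in states $(-,+)$. The right-hand side then involves a returning arc whose value is already known from \eqref{eq.arcs}, together with a ``doubled'' returning arc that, after one further use of \eqref{eq.arcs} and \eqref{eq.loop} (and, if needed, \eqref{eq.kink} to straighten the framing), collapses to a scalar multiple of $\emptys$. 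Solving the resulting scalar identity for $\leftve$ in states $(-,+)$ then produces $-q^{-5/2}$.

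For the oppositely turned arcs \eqref{eq.arcs2}, the point is that $\leftved$ and $\leftvedl$ are returning arcs turned the other way, and each is isotopic in $\fS\times(0,1)$ — not by a planar isotopy — to the arc $\leftve$ with its two boundary endpoints interchanged in height (so that the labels $\ve$ and $\ve'$ get swapped), at the price of one full curl in the framing. I would then remove that curl by \eqref{eq.kink}, which contributes the factor $-q^{3}$, and read off the value from \eqref{eq.arcs1} with the swapped states, obtaining $\leftvedl=\leftved=-q^{3}C^{\ve'}_{\ve}\,\emptys$.

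The step I expect to be the main obstacle is the case $(\ve,\ve')=(-,+)$ of \eqref{eq.arcs1}: one has to set up \eqref{eq.order} on exactly the right local picture and then keep precise track of the heights, the states, and the powers of $q^{1/2}$ contributed by \eqref{eq.order}, \eqref{eq.arcs}, and \eqref{eq.loop}, since an error of $q^{\pm1}$ there would give the wrong $C^{-}_{+}$ and, through the last step, the wrong \eqref{eq.arcs2}. Everything else is bookkeeping with the framed Reidemeister moves and the exchange move.
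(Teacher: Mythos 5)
Your proposal follows the paper's own proof essentially step for step: \eqref{eq.kink} by resolving the kink crossing with \eqref{eq.skein} and \eqref{eq.loop}; the $(-,+)$ case of \eqref{eq.arcs1} by applying \eqref{eq.order} at the two endpoints of the returning arc, reading off $q^{2}(q^{-1/2}) + q^{-1/2}(-q^{2}-q^{-2}) = -q^{-5/2}$ (note the second term is simply a closed loop, handled by \eqref{eq.loop} alone — no second use of \eqref{eq.arcs} or \eqref{eq.kink} is needed there); and \eqref{eq.arcs2} by a space isotopy introducing a curl, removed via \eqref{eq.kink}, then \eqref{eq.arcs1}. The approach is the same as the paper's, and the one extra detail the paper records — that the equality $\leftvedl = \leftved$ itself comes from a rotation by $\pi$ — is implicit but not spelled out in your account.
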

\begin{proof} Identity \eqref{eq.kink} follows from the skein relation and the trivial loop relation, see \cite{Kauffman}.

Except for $(\ve,\ve')=(-,+)$, \eqref{eq.arcs1} is a defining relation.
Applying \eqref{eq.order}, then \eqref{eq.loop}, then \eqref{eq.arcs}, %we have
$$ \leftupp\  =  \ q^2 \leftup + q^{-1/2}\loopup \ = \  q^2( q^{-1/2}) + q^{-1/2}(-q^2 - q^{-2})= -q^{-5/2}, $$
which proves the remaining case of \eqref{eq.arcs1}.

The first equality of \eqref{eq.arcs2} follows from a rotation by $\pi$. Using isotopy, we have
$$ \ \leftved\   =\ \leftvetwist = -q^3  \leftve\    =\  -q^3 C^\ve _{\ve'}  \emptys $$
where the 2nd and the 3rd identities follow from \eqref{eq.kink} and \eqref{eq.arcs1}.
\end{proof}

\begin{lemma}[Height exchange move]\lbl{r.refl}
 (a) One has
\begin{align}
\lbl{eq.reor1}
\reordonez \  = \ q^{-1} \left( \reordonea \right), \quad \reordsixz \  &= \ q^{-1} \left( \reordsixa \right), \quad \reordonepn = q \left( \reordtwopn \right)\\
\lbl{eq.reor2}
q^{\frac 32} \reordpn - q^{-\frac 32} \reordnp &= (q^2 -q^{-2}) \reordthree.
\end{align}

(b) Consequently, if $q=1$ or $q=-1$, then for all $\ve, \ve'\in \{\pm \}$,
\be
\lbl{eq.sign}
 \reordoneall =
 q \reordoneallp.
 \ee

\end{lemma}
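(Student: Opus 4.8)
The plan is to deduce part~(a) formally from the defining relations \eqref{eq.skein}--\eqref{eq.order} together with the corollaries collected in Lemma~\ref{r.arcs}, and then to specialize $q=\pm1$ for part~(b). In every identity the two boundary points whose heights are exchanged lie on a common boundary edge and are joined to two strands which, after a small isotopy, meet the edge as in the left--hand side of the new reordering relation \eqref{eq.order}, possibly after creating or removing a curl; this is where \eqref{eq.order} does the real work.

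For \eqref{eq.reor1}, in each of the three configurations I would first isotope the local picture so that the two strands cross in the standard position of \eqref{eq.order}, recording the factor $-q^{\pm3}$ produced by the curl through the kink relation \eqref{eq.kink}. Applying \eqref{eq.order} then replaces the crossed picture by the reordered diagram (coefficient $q^{\pm2}$) plus a turnback term (coefficient $q^{\mp1/2}$). In the configurations occurring in \eqref{eq.reor1} the turnback is a short boundary arc carrying states for which one of \eqref{eq.arcs}, \eqref{eq.arcs1}, \eqref{eq.arcs2} forces it to vanish, so only the reordered diagram remains, with total coefficient equal to the product of the kink factor and $q^{\pm2}$; this simplifies to $q^{-1}$ in the first two identities and to $q$ in the third. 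For \eqref{eq.reor2} the relevant states do \emph{not} kill the turnback; instead I would apply \eqref{eq.order} to $\reordpn$ and apply the ``mirror'' form of \eqref{eq.order} to $\reordnp$ --- the mirror form being obtained, as in the proof of Lemma~\ref{r.arcs}, by rotating the picture by $\pi$ and using \eqref{eq.kink} --- and subtract the two expansions with coefficients $q^{3/2}$ and $q^{-3/2}$. The reordered diagrams then cancel and the two turnback contributions combine into $(q^2-q^{-2})\reordthree$.

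For part~(b), set $q=\pm1$, so $q^2=q^{-2}=1$. The right--hand side of \eqref{eq.reor2} vanishes and \eqref{eq.reor2} becomes $q^{3/2}\reordpn=q^{-3/2}\reordnp$; since $q^{-3/2}/q^{3/2}=q^{-3}=q$ (the square--root ambiguity cancels in the ratio, and $q^{-3}=q$ because $q^4=1$), this reads $\reordpn=q\,\reordnp$. Together with the identities of \eqref{eq.reor1}, whose coefficients $q^{+1}$ and $q^{-1}$ both equal $q$ when $q^2=1$, this gives $\reordoneall=q\,\reordoneallp$ for every $\ve,\ve'\in\{\pm\}$, which is \eqref{eq.sign}.

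The only real difficulty I anticipate is the bookkeeping of scalar factors: one must pin down exactly the signs and powers of $q$ contributed by each curl (through \eqref{eq.kink}), by the state--dependent coefficients $C^\ve_{\ve'}$ of \eqref{eq.Cve}, and by the $q^{\pm1/2}$'s in \eqref{eq.arcs}--\eqref{eq.arcs2}, and then check that all half--integer powers of $q$ cancel in the final identities. A secondary point is to verify, in each of the unlabeled configurations of \eqref{eq.reor1}, that the turnback term produced by \eqref{eq.order} really is annihilated by one of the arc relations; this is precisely what makes the exchange move a clean scalar multiplication there, unlike the three--term identity \eqref{eq.reor2}.
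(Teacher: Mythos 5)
Your plan for part (a) does not quite work, and the gap is conceptual rather than just bookkeeping. The defining relation \eqref{eq.order} has a very restricted scope: its left-hand side is a diagram whose two consecutive boundary points carry the specific states $(-,+)$ in a specific height order. It cannot be applied to the diagrams appearing in the first two identities of \eqref{eq.reor1}, which (as the paper's proof makes explicit) are precisely the cases $\ve=\ve'$, i.e.\ states $(+,+)$ or $(-,-)$. No isotopy or curl changes the states of the endpoints, so matching these diagrams to the left-hand side of \eqref{eq.order} is impossible. The move that is available for arbitrary states is an isotopy that slides one endpoint past the other, producing a \emph{crossing} in the projection; the relation to apply to that crossing is the skein relation \eqref{eq.skein}, not \eqref{eq.order}. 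That is exactly what the paper does, obtaining the master identity \eqref{eq.9s}: one resolution returns the reordered diagram with coefficient $q^{-1}$, the other produces a short \emph{returning arc} which, by \eqref{eq.arcs1}, evaluates to $C^{\ve'}_{\ve}$ times the cap picture $\reordthree$. When $\ve=\ve'$ this coefficient is $0$, giving the first two identities of \eqref{eq.reor1}.

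Two further points in your description are off for the same reason. First, the ``turnback'' term of \eqref{eq.order} is $\reordthree$, in which the two strands are already joined by an arc in the interior of $\fS$; it carries no boundary states for those endpoints and cannot be killed by \eqref{eq.arcs}--\eqref{eq.arcs2}. The term that \emph{does} vanish via $C^{\ve}_{\ve}=0$ is a stated returning arc, and that arises only from the skein-relation resolution, not from \eqref{eq.order}. Second, there is no curl in the isotopy that exchanges heights, so there is no $-q^{\pm3}$ factor to record; and in any case multiplying such a factor by $q^{\pm2}$ gives $-q^{\pm5}$ or $-q^{\pm1}$, neither of which equals the target coefficients $q^{-1}$ and $q$, so the proposed arithmetic does not close. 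The remaining cases of \eqref{eq.reor1} and \eqref{eq.reor2} are then obtained in the paper by combining \eqref{eq.9s} with \eqref{eq.order} (in the form \eqref{eq.neg}); your idea of subtracting two expansions for \eqref{eq.reor2} is in the right spirit, but the two inputs should be \eqref{eq.9s} and \eqref{eq.order}, not \eqref{eq.order} and its ``$\pi$-rotated'' form. Your deduction of part (b) from part (a) is correct.
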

\begin{proof} (a) Using isotopy, then skein relation~\eqref{eq.skein}, and then~\eqref{eq.arcs1}, we have
 \be
 \lbl{eq.9s}
 \reordoneall \ =\  \reordoneb = q^{-1} \left( \reordoneaa \right) + q \left( \reordonec \right) \ = \ q^{-1} \left( \reordoneaa \right)   +  q C^{\ve'}_\ve\left(  \reordthree \right) .
 \ee
When $\ve=\ve'$, $C^{\ve'}_\ve=0$, and \eqref{eq.9s} proves the first two  identities of \eqref{eq.reor1}.

Suppose $\ve=+, \ve'=-$. Using  \eqref{eq.9s}, then \eqref{eq.order} and \eqref{eq.loop}, we have
$$ \reordonepn = q^{-1} \left( \reordone \right)  - q^{-3/2} \left( \reordthree  \right)= q \left( \reordtwo \right),$$
proving the last identity of \eqref{eq.reor1}.
Now suppose $\ve= +, \ve'=-$. Rewrite \eqref{eq.order} in the form
\be
\lbl{eq.neg}
  \reordtwo \ = \  q^{-2}   \reordone\   \  - \  q^{-5/2} \reordthree
  \ee
Using \eqref{eq.neg} in \eqref{eq.9s}, we get \eqref{eq.reor2}.

(b) follows from (a).
\end{proof}
\begin{corollary}
If $q=1$, then $\cSs(\fS)$ is commutative.
\end{corollary}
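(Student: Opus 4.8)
The plan is to reduce commutativity of the whole algebra to commutativity of products of $\pfS$-tangle diagrams, and then to establish the latter by a height-swap argument using the results of Lemma~\ref{r.refl}(b). First I would note that $\cSs(\fS)$ is spanned as an $\cR$-module by stated $\pfS$-tangle diagrams (for instance, positively ordered ones), so it suffices to check that $[D_1][D_2] = [D_2][D_1]$ for any two such diagrams $D_1, D_2$. By definition $[D_1][D_2]$ is represented by the diagram obtained by stacking $D_1$ above $D_2$, while $[D_2][D_1]$ is $D_1$ below $D_2$. Since $D_1$ and $D_2$ live in disjoint height ranges, their interior double points (the RI/RII/RIII crossings) are not affected by the reordering; what changes is only the relative height order of the boundary points of $D_1$ and $D_2$ lying on a common boundary edge. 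Thus the whole task is to move the $D_1$-endpoints on each boundary edge from above all $D_2$-endpoints to below them, one consecutive transposition at a time.

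The key step is then to show that a single consecutive height exchange on a boundary edge, when $q=1$, does not change the class in $\cSs(\fS)$. This is exactly what \eqref{eq.sign} of Lemma~\ref{r.refl}(b) provides: for $q=1$ the local picture $\reordoneall$ equals $q\reordoneallp = \reordoneallp$, i.e. exchanging the heights of two consecutive boundary points on a boundary edge is the identity operation in $\cSs(\fS)$. I would apply this move repeatedly. To pass from the stacked diagram for $[D_1][D_2]$ to the one for $[D_2][D_1]$ one performs, on each boundary edge $b$, a finite sequence of adjacent transpositions interchanging the $\partial_b(D_1)$-points with the $\partial_b(D_2)$-points; the intermediate states are carried along unchanged by the move. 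Each such elementary transposition leaves the skein class fixed by \eqref{eq.sign}, so after finitely many of them we obtain $[D_2][D_1]$, giving $[D_1][D_2] = [D_2][D_1]$.

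One subtlety I would be careful about is that \eqref{eq.sign} is stated only for two consecutive points, so the reordering must genuinely be carried out one adjacent swap at a time, and one must check that after stacking there is an isotopy of $\pfS$-tangles presenting the product so that on each boundary edge the $D_1$- and $D_2$-endpoints are simply concatenated in height order (so that the required permutation really is a product of adjacent transpositions). Since all of $D_1$ sits above all of $D_2$ before reordering and below after, the permutation on each boundary edge is the block transposition of the $D_1$-block past the $D_2$-block, which is indeed a product of consecutive transpositions; and at each stage the two points being swapped are consecutive in the height order, so \eqref{eq.sign} applies verbatim. The main obstacle, such as it is, is purely bookkeeping: keeping track of the states attached to the boundary points through the swaps and confirming that \eqref{eq.sign} indeed holds for arbitrary states $\ve,\ve'$ (which it does, being quantified over all $\ve,\ve' \in \{\pm\}$). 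With that in hand the corollary follows immediately.
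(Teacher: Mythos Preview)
Your argument has a genuine gap at the step where you claim that ``what changes is only the relative height order of the boundary points of $D_1$ and $D_2$ lying on a common boundary edge.'' This is false. When $D_1$ is stacked above $D_2$, at every point where the projections of $D_1$ and $D_2$ meet in the interior of $\fS$ the strand of $D_1$ passes over the strand of $D_2$; when $D_2$ is stacked above $D_1$, those same crossings have the opposite over/under data. Thus $[D_1][D_2]$ and $[D_2][D_1]$ differ not just at the boundary but at every interior crossing between the two diagrams, and your boundary height-swap moves do nothing to address this.

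The paper's proof fixes exactly this point: at $q=1$ the skein relation \eqref{eq.skein} gives the same right-hand side regardless of which strand is over, so over-crossing equals under-crossing in $\cSs(\fS)$. Combining this crossing-change invariance (to handle the interior intersections of $D_1$ with $D_2$) with your height-exchange argument via \eqref{eq.sign} (to handle the boundary order) yields the commutativity. Your boundary part is fine; you are simply missing the interior part.
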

\begin{proof}
When $q=1$, Identity \eqref{eq.sign} shows that the height order does not matter in $\cSs(\fS)$. Besides, the skein relation show that over-crossing is the same as under-crossing. Hence for any two $\pfS$-tangles $\al, \beta$, we have $\al \beta = \beta \al$.
\end{proof}
\begin{remark} In general, because of relation  \eqref{eq.sign}, $\cSs(\fS)$ is not commutative when $q=-1$. For example, when $\fS$ is an ideal triangle, $\cSs(\fS)$ is not commutative when $q=-1$. This should be contrasted with the case of the usual skein algebra   $\ooS(\fS)$, which is
  commutative and is canonically equal to the $SL_2(\BC)$ character variety of $\pi_1(\fS)$ if $\cR=\BC$ and $q=-1$ (assuming $\fS$ is connected), see \cite{Bullock,PS1}.
\end{remark}

\def\id{\mathrm{id}}
\def\ofS{\mathring{\fS}}
\def\tsigma{\chi}
\def\bdelta{\boldsymbol{\delta}}
 \def\cL{\mathcal L}
 \def\tchi{\tilde \chi}

\subsection{Reflection anti-involution}  \lbl{sec.refl}
\begin{proposition}   \lbl{r.reflection} Suppose $\cR=\BZ[q^{\pm 1/2}]$. There exists a unique $\BZ$-linear
anti-automorphism $\chi: \cSs(\fS) \to \cSs(\fS)$, such that $\chi(q^{1/2})= q^{-1/2}$   and $\tsigma(\al)=\bar \al$, where $\al$ is a stated $\pfS$-tangle, and $\bar \al$ the image of $\al$ under the reflection of $\fS\times (0,1)$, defined by $(z,t) \to (z, 1-t)$. Here $\chi$ is an anti-automorphism means
 for any $x,y \in \cSs(\fS)$ and $r\in \cR$,
$$%\tsigma(rx) = \chi(r) \tsigma(x), \quad
\tsigma(x+y)= \tsigma(x) + \tsigma(y), \quad \tsigma(xy) = \tsigma(y) \tsigma(x).$$
\end{proposition}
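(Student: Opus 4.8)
The plan is to construct $\chi$ on the level of stated $\pfS$-tangle diagrams and then check that it descends to the quotient $\cSs(\fS)$. First I would define $\chi$ on the free $\cR$-module spanned by isotopy classes of stated $\pfS$-tangles: it is the unique $\BZ$-linear map sending $q^{1/2} \mapsto q^{-1/2}$ (and hence $q^{\pm 1} \mapsto q^{\mp 1}$) and sending a stated $\pfS$-tangle $\al$ to $\bar\al$, the image under $(z,t)\mapsto (z,1-t)$, with the state carried along. Since the vertical framing at boundary points is reversed by this reflection but ``vertical'' is characterized up to the two directions, one must be slightly careful: the reflection turns a vertical (upward) framing into a downward one, so strictly one should compose with a framing correction; I would note that at each boundary point this is absorbed into the diagram conventions, and on closed components the framing issue is the standard one handled by \eqref{eq.kink}. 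The map is clearly an involution on tangles (up to isotopy) and reverses the height order on each boundary edge, which is why it should be an \emph{anti}-automorphism: stacking $\al_1$ above $\al_2$ becomes $\bar\al_1$ below $\bar\al_2$, i.e. $\overline{\al_1\al_2}=\bar\al_2\,\bar\al_1$.

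The main work is to verify that $\chi$ respects the four defining relations \eqref{eq.skein}, \eqref{eq.loop}, \eqref{eq.arcs}, \eqref{eq.order}, so that it is well-defined on $\cSs(\fS)$. For the skein relation \eqref{eq.skein}: reflecting a crossing $\cross$ across a horizontal plane swaps over- and under-strands, turning it into the opposite crossing, whose resolution reads $q^{-1}\resoP + q\resoN$ in the original skein algebra; since $\chi$ also sends $q\mapsto q^{-1}$, applying $\chi$ to both sides of \eqref{eq.skein} gives a valid instance of \eqref{eq.skein} back. The trivial loop relation \eqref{eq.loop} is preserved since $-q^2-q^{-2}$ is fixed by $q\mapsto q^{-1}$ and a trivial loop reflects to a trivial loop. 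For the boundary relations I would use Lemma \ref{r.arcs} and Lemma \ref{r.refl}: reflecting the configurations in \eqref{eq.arcs} produces, up to isotopy and the kink relation \eqref{eq.kink}, the configurations appearing in \eqref{eq.arcs2}, and the scalars $C^\ve_{\ve'}$ from \eqref{eq.Cve} satisfy $\chi(C^\ve_{\ve'}) = -q^3 C^{\ve'}_\ve$ (direct check: $\chi(q^{-1/2}) = q^{1/2} = -q^3\cdot(-q^{-5/2})$ and $\chi(-q^{-5/2}) = -q^{5/2} = -q^3\cdot q^{-1/2}$), which is exactly the content of \eqref{eq.arcs2}. Likewise, reflecting \eqref{eq.order} turns the height order around and, after applying \eqref{eq.skein} and \eqref{eq.arcs1} as in \eqref{eq.9s}, reproduces one of the height-exchange identities of Lemma \ref{r.refl}; so \eqref{eq.order} is respected as well. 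I expect this relation-by-relation check, particularly matching the boundary relations \eqref{eq.arcs}/\eqref{eq.order} with their reflected forms via Lemmas \ref{r.arcs} and \ref{r.refl}, to be the main obstacle — everything else is formal.

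Finally, for uniqueness: any $\BZ$-linear anti-automorphism with the stated properties is determined on the generating set $\{q^{\pm 1/2}\}\cup\{\text{stated }\pfS\text{-tangles}\}$, and since these generate $\cSs(\fS)$ as a ring over $\BZ$, two such maps agree. The anti-automorphism property $\chi(xy)=\chi(y)\chi(x)$ on all of $\cSs(\fS)$ then follows from its validity on products of tangles (established above via the stacking description) together with $\BZ$-bilinearity. It remains only to observe $\chi$ is a bijection, which is immediate because $\chi^2=\id$ on tangles and on $\cR$ (reflection is an involution and $q^{1/2}\mapsto q^{-1/2}\mapsto q^{1/2}$), so $\chi$ is its own inverse.
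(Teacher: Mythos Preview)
Your proposal is correct and follows essentially the same route as the paper: lift $\chi$ to the free $\cR$-module on isotopy classes of stated $\pfS$-tangles via $r\al\mapsto \bar r\,\bar\al$, and then check that the defining relations \eqref{eq.skein}--\eqref{eq.order} are preserved so that the map descends. The paper's own proof is a compressed version of yours---it simply says that the height exchange move (Lemma~\ref{r.refl}) handles the defining relations---whereas you spell out the verification relation by relation, invoking Lemma~\ref{r.arcs} (specifically \eqref{eq.arcs2} and the scalar identity $\chi(C^\ve_{\ve'})=-q^3 C^{\ve'}_\ve$) for \eqref{eq.arcs} and Lemma~\ref{r.refl} for \eqref{eq.order}.
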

\begin{proof}
Since $\cSs(\fS)$ is spanned by stated $\pfS$-tangles, the uniqueness is clear.

Let $\cL$ be the free $\cR$-module with basis the set of isotopy classes of stated $\pfS$-tangles and $\tchi: \cL \to \cL$
be the $\BZ$-linear map defined by $\tchi(r\al)= \bar r \bar \al$, where for  $r\in \cR$, $\bar r$ is the image of
$r$ under the involution $q^{1/2} \to q^{-1/2}$.
Using the height exchange move (Lemma \ref{r.refl}),
one sees that $\tsigma$  respects all the defining relations, and hence descends to a map
$\chi: \cSs(\fS) \to \cSs(\fS)$. It is clear that $\chi$ is an anti-automorphism.  \end{proof}

Clearly $\tsigma^2=\id$. We call $\tsigma$ the reflection anti-involution.

\subsection{Basis of stated skein module} \lbl{sec.basis0}
%In this subsection \ref{sec.basis0}, any  $\pfS$-tangle diagram is assumed to have positive boundary order unless otherwise clearly stated.

{ A $\pfS$-tangle diagram $D$} is {\em simple} if it has neither double point nor
trivial component. Here a closed component of $D$ is {\em trivial} if it bounds a disk in $\fS$,
 and an arc component of $\al$ is trivial if it can be homotoped relative its boundary,
 in the complement of other components of $D$,  to a subset of a boundary edge.
 By convention, the empty set is considered as a simple stated $\pfS$-tangle diagram with 0 component.

We order the set $\{ \pm \}$ so that $+$ is greater than $-$.
 A state $s: \partial D \to \{\pm\}$ of a boundary ordered $\pfS$-tangle diagram $D$ is {\em increasing}
if for any  $x,y \in \partial D$ with $ x \ge y$, one has $s(x) \ge s(y)$.
Thus, in an increasing state, on any
boundary edge, the points with $+$ state are above all the points with $-$ state.

 %It is clear that using the defining relations \eqref{eq.skein}-\eqref{eq.order}, one can express any $\pfS$-tangle diagram, considered as an element of $\cSs(\fS)$,  as an $\cR$-linear combination of simple $\pfS$-tangle diagrams with increasing states.
Let $B(\fS)$ be the set of  of all
isotopy classes of  increasingly stated, positively ordered, simple $\pfS$-tangle diagrams.

\def\inv{\mathrm{nd}}
\begin{theorem}\lbl{thm.basis}
As an $\cR$-module, $\cSs(\fS)$ is free with basis $B(\fS)$.

%Here isotopy means the isotopy of subsets of $\fS$.
\end{theorem}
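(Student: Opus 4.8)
The plan is to apply the diamond lemma (Bergman's version, as adapted to the skein-theoretic setting) to a rewriting system on the free $\cR$-module $\cL$ spanned by all isotopy classes of positively ordered, stated $\pfS$-tangle diagrams. First I would set up the reduction rules. The spanning property is the easy direction: using the skein relation \eqref{eq.skein} I can resolve all crossings, using the trivial loop relation \eqref{eq.loop} and \eqref{eq.kink} I can remove trivial closed components, using \eqref{eq.arcs} (and its consequence \eqref{eq.arcs1}) I can remove trivial arc components (those homotopic into a boundary edge), and using the height-exchange relations \eqref{eq.order}, \eqref{eq.reor1}, \eqref{eq.reor2} I can sort the states along each boundary edge into increasing order; in each such step the diagram is replaced by a linear combination of diagrams that are ``smaller'' in a suitable well-founded complexity measure (number of crossings, number of trivial components, number of out-of-order adjacent pairs on boundary edges, plus a term handling the trivial-arc removal). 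This shows $B(\fS)$ spans $\cSs(\fS)$.

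For linear independence I would verify that the rewriting system is confluent: all ambiguities (overlaps of the left-hand sides of two reduction rules) resolve to the same normal form. This is where the bulk of the work lies. The relevant ambiguities are: (i) two crossings sharing a strand, which is exactly the classical verification that the Kauffman bracket skein relations are consistent under Reidemeister moves RII and RIII (handled as in \cite{Prz,Turaev}); (ii) overlaps between the boundary relations \eqref{eq.arcs}, \eqref{eq.order} and their consequences at a single boundary edge — three or more endpoints on one edge with various states, where the two ways of applying \eqref{eq.order} pairwise must agree, and the interaction of \eqref{eq.order} with \eqref{eq.arcs} when a strand has both endpoints on the same edge; (iii) overlaps between a crossing near a boundary edge and the height-exchange move (the exchange move of Figure \ref{fig:Emove} must be compatible with sliding a crossing past it); and (iv) the interaction of \eqref{eq.arcs}, which kills an endpoint, with \eqref{eq.order}, which permutes endpoints. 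The choice of coefficients $q^{-1/2}$, $q^2$, $q^{-1/2}$ in \eqref{eq.arcs}, \eqref{eq.order} is precisely what makes these resolve — this is the ``consistency'' alluded to before Theorem \ref{thm.zero}, and it is exactly the content the author promises to check carefully (Section \ref{sec.uniq} on uniqueness confirms the coefficients are forced).

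The key technical point making confluence tractable is \emph{locality}: reductions at distinct boundary edges and at crossings in the interior commute trivially, so one only needs to check finitely many local configurations, each on a disk meeting at most one boundary edge. After checking all overlap ambiguities resolve, the diamond lemma yields that the normal forms — which by the spanning argument are exactly the elements of $B(\fS)$ — form an $\cR$-basis. One subtlety to address: the reflection anti-involution $\chi$ of Proposition \ref{r.reflection} and the height-exchange relations of Lemma \ref{r.refl} should be invoked to reduce the number of cases, since a state configuration and its reflection give equivalent ambiguities.

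The main obstacle I expect is ambiguity type (ii)–(iv): the boundary relations are genuinely new (\eqref{eq.order} does not appear in \cite{BW1}), and one must carefully track the $q$-powers through the multi-endpoint overlaps — in particular the overlap where three endpoints $s_1,s_2,s_3$ on one edge are reordered, since applying \eqref{eq.order} to $(s_1,s_2)$ then to the resulting pairs, versus to $(s_2,s_3)$ first, must give the same increasing-state combination; and the overlap where an arc returning to the same edge can be capped off by \eqref{eq.arcs1} in two ways after a reordering. These are finite but delicate $q$-arithmetic checks, and getting the framing/height conventions consistent with the pictures is where errors would creep in.
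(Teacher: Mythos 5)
Your proposal follows essentially the same route as the paper: a rewriting system on isotopy classes of positively ordered stated $\pfS$-tangle diagrams driven by the defining relations, termination via a complexity measure, and a local-confluence check, applying the diamond lemma in the Sikora--Westbury form \cite[Theorem 2.3]{SW}. Your ambiguity types (i) and (iii) do not in fact arise: the paper deals with RII/RIII invariance in a separate preliminary lemma identifying the rewriting quotient with $\cSs(\fS)$ (since RII/RIII follow from \eqref{eq.skein} and \eqref{eq.loop}, after Kauffman), and a double point in the interior can always be isotoped off the boundary interval on which \eqref{eq.arcs} or \eqref{eq.order} act, so the only genuine overlaps are the returning-arc configurations on a single boundary edge that you flag in (ii) and (iv).
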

\def\tB{\tilde B}
\begin{proof} The proof uses the diamond lemma, in the form explained in \cite{SW}.
 For a set $X$ denote by $\cR X$ the free $\cR$-module with basis $X$,
with the convention $\cR X = \{ 0\}$ when $X=\emptyset$. In this proof, all the $\pfS$-tangle diagram is assumed
to have positive order.

Let $\tB$ be the set of all isotopy classes of stated  $\pfS$-tangle diagrams.
Then $B=B(\fS)$ is a subset of $\tB$.
Define a binary relation $\to$ on $\cR \tB$ as follows.  First assume
$D\in V$ and $E \in \cR \tB$.  We write $D \to E$ if $D$ is any element in $\tB$
presented by the left hand side of an identity in the defining relations \eqref{eq.skein}--\eqref{eq.order},
 and $E$ is the corresponding right hand side.

Now assume $E', E''\in \cR \tB$, with  $E' =\sum_{i=1} ^kc_i D_i$, where $D_i\in \tB$.
We write $E' \to E''$ if there is  an index $j\le k$ and $E\in \cR \tB$ with  $D_j \to E$,
such that $E''$ is obtained from $E'$ by replacing $D_j$ with $E$ in the sum $\sum_{i=1} ^kc_i D_i$.

Let $\starto$ be the reflexive and transitive relation on $\cR \tB$ generated by $\to$,
 i.e.  $E \starto E'$ if either  $E=E'$ or there are $E_1, E_2, \dots, E_k\in \cR \tB$
with $E_i \to E_{i+1}$ for all $i=1, \dots, k-1$ such that $E_1=E, E_k= E'$.
If $E\starto E'$, we say $E'$ is a {\em descendant} of $E$.

Let $\sim$ be the equivalence relation on $\cR V$ generated by $\to$. We will prove the following two lemmas in Subsection \ref{sec.phu}.

\begin{lemma}  \lbl{r.p1}
One has $\cR \tB /\sim\  = \ \cSs(\fS)$.
\end{lemma}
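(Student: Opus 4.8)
The plan is to establish the two equalities $\cR\tB/\!\sim\ =\ \cSs(\fS)$ as quotients of $\cR\tB$ by showing that the submodule generated by the relation $\sim$ coincides with the submodule of defining relations used to construct $\cSs(\fS)$. First I would recall that by definition $\cSs(\fS)$ is the quotient of the free $\cR$-module $\cL$ on isotopy classes of stated $\pfS$-tangles by the submodule $N$ generated by the defining relations \eqref{eq.skein}--\eqref{eq.order}. Since every stated $\pfS$-tangle is isotopic to one presented by a positively ordered stated tangle diagram, and two such diagrams give isotopic tangles, there is a natural surjection $\cR\tB \twoheadrightarrow \cL$, hence a surjection $\pi:\cR\tB \twoheadrightarrow \cSs(\fS)$; the point is to identify its kernel.

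The forward inclusion is immediate: whenever $D\to E$, the pair $(D,E)$ is exactly one instance of a defining relation \eqref{eq.skein}--\eqref{eq.order} (read in a small disk, with everything else held fixed), so $D-E\in\ker\pi$; taking the $\cR$-linear span and then the equivalence relation generated, we get that $E\sim E'$ implies $\pi(E)=\pi(E')$, so $\sim$-classes map into well-defined elements of $\cSs(\fS)$ and we obtain a surjection $\cR\tB/\!\sim\ \twoheadrightarrow \cSs(\fS)$. For the reverse direction I would check that $\cR\tB/\!\sim$ satisfies the universal property (or simply the defining relations) of $\cSs(\fS)$: the submodule generated by $\sim$ contains every element of the form $D_1-qD_2-q^{-1}D_3$ coming from \eqref{eq.skein}, and similarly the loop relation \eqref{eq.loop} and boundary relations \eqref{eq.arcs}, \eqref{eq.order}, because each is literally one of the moves $D\to E$; moreover the span of these is closed under the "apply in a subdiagram" operation precisely because $\to$ is defined on all of $\tB$, not just on the standard small-disk configurations — here one uses that any two diagrams agreeing outside a disk differ by a single application. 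Thus the two presentations have the same relation submodule and $\cR\tB/\!\sim\ =\ \cSs(\fS)$ as $\cR$-modules.

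The one subtlety I expect to be the main obstacle is matching the equivalence relation generated by $\to$ with the $\cR$-linear submodule $N$: a priori $\sim$ is only the equivalence relation generated by the single-term replacements $D_j\to E$, and one must confirm that the resulting quotient is still an $\cR$-\emph{module} quotient (i.e. that $\sim$ is compatible with addition and scalar multiplication) and that it captures \emph{all} $\cR$-linear combinations of defining relations, not just those obtainable by a chain of one-at-a-time rewrites. This is handled by the standard observation that the congruence generated by a set of relations of the form "replace a basis element by an $\cR$-combination of basis elements" is exactly the $\cR$-linear span of the differences $D-E$; I would cite the setup of the diamond lemma as in \cite{SW}, where the ambient module structure is built into the formalism, so that $\cR\tB/\!\sim$ is by construction the module $\cR\tB/\langle D-E : D\to E\rangle$. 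Once this is in place, the identification with $\cSs(\fS)$ is a tautology, and the genuinely substantive work — confluence of $\to$ and the resulting normal forms — is exactly what is deferred to the second lemma and the subsequent resolution of all overlapping ambiguities.
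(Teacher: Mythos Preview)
Your proposal has a genuine gap: you never address the passage from \emph{diagrams} to \emph{tangles}. Recall that $\tB$ is the set of isotopy classes of stated (positively ordered) $\pfS$-tangle \emph{diagrams}, whereas $\cSs(\fS)$ is defined as a quotient of the free $\cR$-module on isotopy classes of stated $\pfS$-\emph{tangles}. Two diagrams represent the same tangle if and only if they differ by a sequence of the framed Reidemeister moves RI, RII, RIII. Thus the kernel of your surjection $\pi:\cR\tB\twoheadrightarrow\cSs(\fS)$ is generated by two kinds of elements: the defining relations \eqref{eq.skein}--\eqref{eq.order}, which you correctly identify with the moves $D\to E$, \emph{and} the differences $D-D'$ where $D,D'$ are diagrams related by a Reidemeister move. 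Your ``reverse direction'' only verifies that the first kind lies in the $\sim$-submodule; it says nothing about the second. Put differently, in order to check that $\cR\tB/\!\sim$ ``satisfies the universal property of $\cSs(\fS)$'' you would first need a well-defined map from isotopy classes of tangles into $\cR\tB/\!\sim$, and that is exactly the Reidemeister issue.

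The missing ingredient, and the actual content of the paper's proof, is Kauffman's classical observation that invariance under the framed moves RI, RII, RIII is a \emph{consequence} of the skein relation \eqref{eq.skein} together with the trivial loop relation \eqref{eq.loop}: resolving all crossings on each side of a Reidemeister move and removing trivial circles yields identical elements. This is what allows one to conclude
\[
\cSs(\fS)=\cR\tB/(\text{RI, RII, RIII, and \eqref{eq.skein}--\eqref{eq.order}})=\cR\tB/(\text{\eqref{eq.skein}--\eqref{eq.order}})=\cR\tB/\!\sim.
\]
By contrast, the issue you flag as the ``main obstacle'' --- whether the congruence generated by $\to$ is an $\cR$-module congruence --- is not where the difficulty lies: $\to$ is already extended $\cR$-linearly to all of $\cR\tB$ in its definition, so the equivalence relation it generates is automatically compatible with the module structure.
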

\begin{lemma} \lbl{r.p2}
The relation  $\to$ is

(i) {\em terminal},  i.e. there does not exist an infinite sequence $E_1 \to E_2 \to E_3 \to \dots$, and

(ii) {\em locally confluent on $\tB$}, i.e. if $D \to E_1$ and $D \to E_2$ for some $D\in \tB$,
then $E_1, E_2$ have a common descendent.
\end{lemma}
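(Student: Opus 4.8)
The statement to prove is Lemma~\ref{r.p2}: the rewriting relation $\to$ on $\cR\tilde B$ is terminal and locally confluent on $\tilde B$.

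My plan is to treat the two parts separately, since they have very different flavours. For \textbf{terminality} (part (i)) I would introduce a complexity function on stated $\pfS$-tangle diagrams that strictly decreases under every elementary rewrite $D\to E$ (interpreted termwise on the right-hand side). The natural candidate is a lexicographically ordered tuple whose first entry is the number of crossings of $D$ (this drops for \eqref{eq.skein} and is unchanged by the others, so crossing-number alone reduces us to crossingless diagrams), whose second entry measures how far the diagram is from being ``reduced at the boundary'' --- e.g. the number of boundary arcs that are trivial (retractable into a boundary edge), which is what \eqref{eq.arcs} and Lemma~\ref{r.arcs} kill, together with the number of trivial closed loops, which \eqref{eq.loop} kills --- and whose third entry counts the number of ``height inversions'' on the boundary edges, i.e. pairs of consecutive endpoints on a common edge at which the lower endpoint carries the greater state, which is exactly the quantity decreased by relation \eqref{eq.order}. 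One checks that each defining relation, read left-to-right, strictly decreases this tuple in the lexicographic order, and that these tuples take values in a well-ordered set; hence no infinite descending chain exists.

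For \textbf{local confluence} (part (ii)) the standard approach is to enumerate the overlapping pairs of left-hand sides: if $D\to E_1$ and $D\to E_2$ arise from two applications of defining relations whose supports (the small disks/boundary arcs where the relations live) are disjoint, the two rewrites commute and a common descendant is reached in one further step each; so the work is entirely in the finitely many cases where the two supports overlap. These are the classical ``critical pairs'': two crossings sharing a strand (handled by the Kauffman-bracket computations that prove invariance under RII and RIII, i.e.\ the standard skein-theory diamond checks, see \cite{SW,Prz}); a crossing adjacent to a boundary arc being resolved by \eqref{eq.arcs} or \eqref{eq.arcs1}--\eqref{eq.arcs2}; two boundary endpoints involved simultaneously in \eqref{eq.order} and in a neighbouring \eqref{eq.arcs}-type relation or in another \eqref{eq.order} on an overlapping triple of consecutive points; and the interaction of \eqref{eq.order} with a crossing pushed against the boundary edge. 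For each such configuration I would resolve both branches down to a common $\cR$-linear combination of basis diagrams, using the already-established consequences in Lemma~\ref{r.arcs} and the height-exchange identities of Lemma~\ref{r.refl} (which are precisely the ``commutation'' relations needed to reconcile competing reorderings). Since by Lemma~\ref{r.p1} the quotient $\cR\tilde B/\sim$ is $\cSs(\fS)$, and since $\to$ is terminal, the diamond lemma then yields Theorem~\ref{thm.basis}: every $\to$-irreducible lies in $B(\fS)$, and each equivalence class contains a unique irreducible.

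The \textbf{main obstacle} is the local confluence check, and within it the most delicate critical pairs are those mixing the new relation \eqref{eq.order} with itself on three consecutive boundary points (a ``braid-like'' overlap of reorderings, where one must verify that reordering $a,b,c$ as $(a,b),(a,c),(b,c)$ versus $(b,c),(a,c),(a,b)$ gives the same element --- effectively a Yang--Baxter/consistency check for the boundary $R$-matrix encoded by $C^\ve_{\ve'}$) and those mixing \eqref{eq.order} with \eqref{eq.arcs} when a reordered strand becomes a returning arc. These are exactly the computations that pin down the coefficients $q^2$ and $q^{-1/2}$ in \eqref{eq.order}; I expect them to reduce, after applying \eqref{eq.skein}, \eqref{eq.loop}, and Lemma~\ref{r.arcs}, to short but not entirely mechanical identities in $\cR=\cR$, and the bookkeeping of states (summing over the intermediate $\pm$ on newly created arc endpoints) is where care is required. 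The purely internal skein overlaps (RII, RIII type) are routine and can be cited from \cite{Prz,SW}.
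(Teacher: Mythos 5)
Your overall strategy (a strictly decreasing complexity for termination, a critical-pair check for confluence) is the same as the paper's, but both halves of your proposal contain concrete errors.

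For termination, the lexicographic tuple $(\text{crossings},\ \text{trivial arcs}+\text{trivial loops},\ \text{inversions})$ does \emph{not} strictly decrease. The $q^{-1/2}$ term on the right of \eqref{eq.order} closes up two consecutive boundary endpoints into the surface; this can \emph{create} a trivial component. For example, if two parallel arcs each run from a boundary edge $e$ to a boundary edge $e'$ and you cap off their consecutive $e$-ends, you obtain a trivial returning arc on $e'$; capping the two ends of a single returning arc can similarly produce a trivial loop. Crossings are unchanged, so your first entry is fixed while your second entry increases. The quantity that must be tracked to make this term decrease is the number of boundary endpoints, which drops by $2$. The paper instead uses a single integer, $\deg(D)=2\cdot(\text{double points})+(\text{components})+(\text{boundary points})+\inv(s)$, and checks once that every defining relation decreases it; you should adopt a measure of this kind.

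For local confluence, your list of overlaps is both too large and not quite aimed at the ones that matter. Because the paper takes the support of a skein rewrite \eqref{eq.skein} to be the double point itself, two skein rewrites never have overlapping supports, so there is no RII/RIII-type diamond to check here (those Kauffman-bracket identities are needed only in Lemma~\ref{r.p1}). The ``Yang--Baxter'' overlap of two applications of \eqref{eq.order} on consecutive points $p_1<p_2<p_3$ cannot occur at all: one application requires $s(p_2)=-$ and the other requires $s(p_2)=+$, so they are never simultaneously applicable. Likewise the trivial-loop relation only applies when the bounded disk contains no other part of $D$, so its support is disjoint from every crossing and from the boundary. Once these are eliminated the only genuine critical pairs are the three configurations in which the interval supporting an \eqref{eq.order} application abuts the disk supporting an \eqref{eq.arcs} trivial-arc application; each is resolved by a one- or two-step computation using \eqref{eq.order} together with the derived rewrite for the $(-,+)$-stated trivial arc (the relation \eqref{eq.arcs1a} in the paper). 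Your proposal neither rules out the spurious overlaps nor isolates these three, which is where the actual work of the lemma lies.
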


Since $\to$ is terminal and locally confluent on $\tB$,
\cite[Theorem 2.3]{SW} shows that $\tB_{irr}$, the subset of elements $D\in \tB$ for which
there is no $E\in \cR \tB$
such that $D \to E$, is a basis of $\cR \tB/\sim$, which is $\cSs(\fS)$ (by Lemma \ref{r.p1}).
 It remains to notice
that $\tB_{irr}=B$.
The theorem is proved. \end{proof}

\def\ts{\tilde s}
\def\pr{\mathrm{pr}}

\subsection{Proofs of Lemmas \ref{r.p1} and \ref{r.p2}} \lbl{sec.phu}
\begin{proof}[Proof of Lemma \ref{r.p1}]
Two stated $\pfS$-tangle diagrams define the same stated $\pfS$-tangle if and only if
one can be obtained from the other by a sequence of framed Reidemeister moves RI, RII and RIII
 moves of Figure \ref{fig:Rmoves}. Thus, $\cSs(\fS)= \cR \tB/ (rel)$,
 where $(rel)$ consists of the defining relations \eqref{eq.skein}--\eqref{eq.order}
and the the moves RI, RII, RIII.
 But the three moves RI, RII, and RIII are consequences of the skein relation \eqref{eq.skein}
 and the trivial knot relation \eqref{eq.loop} (see \cite{Kauffman}). Hence,
$\cSs(\fS)= \cR \tB/ (rel)= \cR \tB /\sim$.
\end{proof}

Suppose $s:\partial(\al) \to \{\pm\}$ is a state of a $\pfS$-tangle $\al$.
A pair $(x,y) \in \partial(\al)^2$ is {\em $s$-decreasing} if $x>y$ and $s(x)=-, s(y)=+$. Let $\inv(s)$ be the number of $s$-decreasing pairs. Then $\inv(s)=0$ if and only if $s$ is increasing.
%, i.e. $s= \sincr$.

%An $s$-decreasing pair $(x,y)$ is consecutive if there is no point $z\in \partial \al$ such that $ x >z >y$.

\begin{proof}[Proof of Lemma \ref{r.p2}] (a)
 For $D\in \tB $, with state $s$, let $\deg(D)$ be the sum of 4 terms: two times the number of double points,
the number of components, the number of boundary points, and $\inv(D)$.
By checking each of the relations  \eqref{eq.skein}--\eqref{eq.order},
one sees that that if $D\in \tB $ and $D \to E\in \cR \tB $, then $E$
is a linear combination of elements $D_j \in \tB $ with $\deg(D_j) <\deg(D)$. Hence, By \cite[Theorem 2.2]{SW} the relation $\to$ is terminal.

(b) Suppose $D$ is a stated $\pfS$-tangle diagram. For now we don't consider $D$ up to isotopy.
A disk $d \subset \fS$ is called {\em $D$-applicable} if $D \cap d$ is the left hand side of one of the defining relations \eqref{eq.skein}--\eqref{eq.order}. In that case let $F_d(D)= \sum_j c_j D_j$ be the corresponding right hand side, so that $D \to F_d(D)$. Here $D_j=D$ outside $d$.
%, and $D_j$ is defined up to isotopy inside $d$.

 Suppose $E= \sum c_i D_i$, where $0\neq c_i \in \cR$ and $D_i$ is a stated $\pfS$-tangle diagram for each $i$. A disk $d\subset \fS$ is said to be {\em $E$-applicable} if $d$ is $D_i$-applicable for each $i$. In that case, define $F_d(E) = \sum c_i F_d(D_i)$. Clearly $E \starto F_d(E)$.

If $d_1, d_2$ are two disjoint $D$-applicable disks, then $d_1$ is $F_{d_2}(D)$-applicable and $d_2$ is $F_{d_1}(D)$-applicable, and $F_{d_1}(F_{d_2}(D)) = F_{d_2}(F_{d_1}(D))$ is a common descendant of $F_{d_1}(D)$ and $D_{d_2}(D)$.

Now suppose $D\to E_1$ and $D\to E_2$. We have to show that $E_1$ and $E_2$ have a common descendant.
There are applicable $D$-disks $d_1$ and $d_2$ such that $E_1= F_{d_1}(D)$ and $E_2= F_{d_2}(D)$.
If $d_1$ and $d_2$ are disjoint, then $E_1$ and $E_2$ have a common descendant.
 It remains the case when $d_1 \cap d_2 \neq \emptyset$.

The {\em support} of a $D$-applicable disk $d$ is defined to be
\begin{itemize}
\item the double point for the case of \eqref{eq.skein},
\item the closed disk bounded by the loop for the case of \eqref{eq.loop}
\item the closed disk bounded the arc of $D$ and part of the boundary edge between the two end points of the arc in the case of \eqref{eq.arcs}, and
    \item the closed interval between the two boundary points on the boundary edge in the case of \eqref{eq.order}.
\end{itemize}
In doing the move $D\to F_d(D)$, we can assume that $d$ is a small neighborhood of its support.
Hence if the supports of $d_1$ and $d_2$ are disjoint, then we can assume that $d_1$ and $d_2$ are disjoint.
By inspecting the left hand sides of \eqref{eq.skein}--\eqref{eq.order} we see that there are only three cases when supports of $d_1$ and $d_2$ are not disjoint. These cases are described in Figures \ref{fig:case1}--\ref{fig:case3}.

\FIGc{case1}{Case 1: The shaded area on the right (resp. left) is $d_1$ (resp. $d_2$). The shaded area in the middle is $d_1 \cup d_2$}{1.4cm}
\FIGc{case2}{Case 2: The shaded area on the right (resp. left) is $d_1$ (resp. $d_2$).
%The shaded area in the middle is $d_1 \cup d_2$
}{1.4cm}
\FIGc{case3}{Case 3: The shaded area on the right (resp. left) is $d_1$ (resp. $d_2$).
%The shaded area in the middle is $d_1 \cup d_2$
}{1.4cm}

Note that the proof of \eqref{eq.arcs1} actually shows that
\be
\lbl{eq.arcs1a}
\leftupp\  \starto -q^{-5/2} \emptys
\ee

Case 1. From \eqref{eq.arcs} we have $E_1= F_{d_1}(D)= 0$. Using  \eqref{eq.order} then \eqref{eq.arcs1a},
$$ E_2= F_{d_2}(D)=  q^2 \, \caseonea \ + q^{-1/2} \, \caseoneb \quad \starto \quad - q^{-1/2} \, \caseoneb \ + \ q^{-1/2} \, \caseoneb =0. $$
%This shows that $E_1$ is a common descendant of $E_1$ and $E_2$.

Case 2.  From \eqref{eq.arcs} we have $E_1= F_{d_1}(D)= 0$. Using  \eqref{eq.order} then \eqref{eq.arcs1a},
$$ E_2= F_{d_2}(D)=  q^2 \, \casetwoa \ + q^{-1/2} \, \casetwob \quad \starto \quad - q^{-1/2} \, \casetwob \ + \ q^{-1/2} \, \casetwob =0. $$
%This shows that $E_1$ is a common descendant of $E_1$ and $E_2$.

Case 3.  From \eqref{eq.arcs} we have
$$E_1= F_{d_1}(D)=  q^{-1/2} \, \casethreea $$
Using  \eqref{eq.arcs}, we have
$$ E_2 = F_{d_2}(D)= q^2\,  \casethreeb \ + \ q^{-1/2} \, \casethreea \quad \starto \quad q^{-1/2} \, \casethreea \ = E_1.$$
In all three cases, $E_1$ is a common descendant of $E_1$ and $E_2$, completing the proof.
\end{proof}

 \subsection{More general boundary order}\lbl{sec.order} Let $\ori$ be an {\em orientation} of
$\pfS$. For a boundary edge $b$, we say $\ori$ is {\em positive on $b$} if it is equal to the orientation inherited from $\fS$,
otherwise it is called {\em negative} on $b$.
 Equation \eqref{eq.order} can be rewritten as \eqref{eq.neg},
which expresses a positive order term  as a sum of  a negative order term  and a term of lesser complexity.
Let  $B(\ori;\fS)$ be the set of
 of all isotopy classes of increasingly stated, $\ori$-ordered, simple $\pfS$-tangle diagrams.
 The proof of Theorem \ref{thm.basis} can be easily modified to give the following more general statement.

\begin{theorem}    \lbl{thm.basis1a}
 Suppose $\fS$ is a punctured bordered surface and $\ori$ is an orientation of $\pfS$.
Then $B(\ori;\fS)$ is an $\cR$-basis of $\cSs(\fS)$.
  \end{theorem}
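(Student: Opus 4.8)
The plan is to reduce the statement to Theorem~\ref{thm.basis} (the case where $\ori$ is the positive orientation on every boundary edge) by a change-of-basis argument, rather than rerunning the diamond lemma from scratch. First I would fix the orientation $\ori$ and split the set of boundary edges of $\fS$ into those on which $\ori$ is positive and those on which it is negative. For an $\ori$-ordered $\pfS$-tangle diagram, the height order on each boundary edge is read off from $\ori$; on a negative edge this is the reverse of the positive order. The key observation is the height-exchange relation: Lemma~\ref{r.refl}, and in particular the rewriting of \eqref{eq.order} in the form \eqref{eq.neg}, shows that swapping two consecutive boundary points on an edge replaces a diagram by a scalar multiple of itself plus a diagram with strictly fewer boundary points (the term $\reordthree$ has two fewer endpoints). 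Hence reordering the boundary points of a diagram from $\ori$-order to positive order expresses any $\ori$-ordered stated diagram as an $\cR$-linear combination of positively ordered stated diagrams, with a leading term equal to the "same" diagram re-read in positive order times an invertible scalar (a power of $q^{1/2}$), plus lower-complexity corrections.

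The main step is to make this triangularity precise with a suitable complexity filtration. I would put on the set of stated $\pfS$-tangle diagrams the filtration by the total number of boundary points, and within a fixed number of boundary points, by the number $\inv(s)$ of decreasing pairs together with the crossing count, exactly as in the proof of Lemma~\ref{r.p2}(a) where $\deg(D)$ is used to establish terminality. Relative to this filtration, the linear map sending a positively ordered simple stated diagram to the corresponding $\ori$-ordered simple stated diagram is "upper triangular with invertible diagonal": the diagonal entries are the monomials $q^{\pm 1/2}$ accumulated by moving $+$'s past $-$'s on the negative edges and by switching over/under nothing (boundary reorderings do not touch crossings), and all off-diagonal contributions drop the filtration degree. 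Such a map is an isomorphism of $\cR$-modules. Since $B(\fS)$ (positive order) is a basis by Theorem~\ref{thm.basis}, its image under this map, which is precisely $B(\ori;\fS)$, is also a basis. A clean way to package this: define an $\cR$-linear map $\Psi$ on the free module $\cR B(\ori;\fS)$ and show it is inverse, up to lower-order terms, to the analogous map on $\cR B(\fS)$; an induction on the filtration degree then shows both composites are triangular unipotent (after rescaling the diagonal), hence invertible.

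Alternatively, and perhaps more economically, I would simply re-examine the proof of Theorem~\ref{thm.basis} and observe that the diamond-lemma setup goes through verbatim with \eqref{eq.order} replaced by its $\ori$-version: the only place the positive order is used is in Lemma~\ref{r.p1} (Reidemeister moves are the same) and in the degree function of Lemma~\ref{r.p2}(a), where $\inv(s)$ is defined with respect to the height order — which now comes from $\ori$. One rewrites \eqref{eq.order} so that the $\ori$-increasing side is the reducible (left) side; this is exactly \eqref{eq.neg} on negative edges and \eqref{eq.order} on positive edges. Then $\tB_{irr}$ consists of increasingly $\ori$-stated, simple, $\ori$-ordered diagrams, i.e.\ $B(\ori;\fS)$, and the diamond lemma applies as before. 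The expected main obstacle is bookkeeping: one must check local confluence still holds when the applicable disks for the boundary relations sit on edges of mixed orientation — but the three cases of Figures~\ref{fig:case1}--\ref{fig:case3} are local to a single boundary edge, so no new overlap configurations arise, and the computations are identical after the substitution \eqref{eq.order}$\leftrightarrow$\eqref{eq.neg}. I would therefore present the argument as: "the proof of Theorem~\ref{thm.basis} applies mutatis mutandis, using \eqref{eq.neg} in place of \eqref{eq.order} on the edges where $\ori$ is negative, and defining $\inv(s)$ with respect to the $\ori$-order," and then note the triangular change of basis as the conceptual reason it works.
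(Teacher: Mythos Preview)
Your second approach is exactly what the paper does: the paper's entire proof is the one sentence ``The proof of Theorem~\ref{thm.basis} can be easily modified to give the following more general statement,'' preceded by the remark that \eqref{eq.order} can be rewritten as \eqref{eq.neg}, which expresses a positive-order term via a negative-order term plus a term of lesser complexity. Your formulation---run the diamond lemma verbatim, using \eqref{eq.neg} in place of \eqref{eq.order} on the negative edges and defining $\inv(s)$ with respect to the $\ori$-order, noting that the confluence checks are local to a single edge---is precisely the intended argument, spelled out in more detail than the paper bothers to.

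Your first approach, the triangular change-of-basis reduction to Theorem~\ref{thm.basis}, is a genuinely different route. It is correct in spirit and has the virtue of explaining \emph{why} the modified diamond lemma must succeed, but it requires a bit more care than you indicate: the natural bijection $B(\fS)\leftrightarrow B(\ori;\fS)$ is not ``same diagram, same state, different order'' (an increasing state for the positive order is typically \emph{decreasing} on a negative edge), but rather ``same underlying simple diagram, same number of $+$'s on each edge.'' Once you set it up that way, the height-exchange identities of Lemma~\ref{r.refl} do give the unitriangularity you want with respect to the filtration by $|\partial\al|$. The paper's approach is shorter; yours gives a cleaner conceptual picture.
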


\subsection{Filtration} \lbl{sec.filtration} Note that $|\partial(\al)|$ is even, for any $\pfS$-tangle $\al$.
For each non-negative integer $m$ let $F_m=F_m(\cSs(\fS))$ be the $\cR$-submodule of $\cSs(\fS)$
 spanned by all $\pfS$-tangles $\al$ such that $|\partial(\al)|\le  2m$. Clearly $F_m \subset F_{m+1}$ and $F_m F_k \subset F_{m+k}$. In other words, $\cSs(\fS)$ is a filtered algebra with the filtration $\{ F_m\}$. The associated graded algebra is denoted by $\Gr(\cSs(\fS))$, with $\Gr_m(\cSs(\fS))= F_m/F_{m-1}$ for $m\ge 1$ and $\Gr_0= F_0$. The following is a consequence of Theorem \ref{thm.basis1a}.
\begin{proposition}\lbl{r.basis2}  Le $\fS$ be a punctured bordered surface and $\ori$ be an orientation of $\pfS$.\\
 (a) The set $\{ \al \in B(\ori;\fS) \mid |\partial(\al)|\le 2m\} $
is an $\cR$-basis of $F_m(\cSs(\fS))$.\\
(b) The set $\{ \al \in B(\ori;\fS) \mid |\partial(\al)|= 2m\} $ is an $\cR$-basis of $\Gr_m(\cSs(\fS))$.
\end{proposition}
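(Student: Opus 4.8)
The plan is to deduce both statements directly from Theorem \ref{thm.basis1a}, which tells us that $B(\ori;\fS)$ is an $\cR$-basis of $\cSs(\fS)$. The key observation is that the filtration degree of a basis element is controlled entirely by its number of boundary points: for a simple $\pfS$-tangle diagram $\al$, the number $|\partial(\al)|$ is a bona fide isotopy invariant and does not change under any of the moves used to bring a tangle into the form of a basis element. So the first step is to split $B(\ori;\fS)$ as the disjoint union $\bigsqcup_{m\ge 0} B_m$, where $B_m = \{\al \in B(\ori;\fS) \mid |\partial(\al)| = 2m\}$, and to note that each $B_m$ spans an $\cR$-submodule of $F_m(\cSs(\fS))$.

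For part (a), I would argue that $\{\al \in B(\ori;\fS) \mid |\partial(\al)| \le 2m\} = \bigsqcup_{k\le m} B_k$ spans $F_m(\cSs(\fS))$: given any $\pfS$-tangle $\al$ with $|\partial(\al)| \le 2m$, expressing $[\al]$ in the basis $B(\ori;\fS)$ via the reduction procedure of Theorem \ref{thm.basis} only ever produces terms with no more boundary points than $\al$ — one inspects the right-hand sides of \eqref{eq.skein}--\eqref{eq.order} (and \eqref{eq.neg}) and sees that each either preserves or decreases the number of boundary points. Hence $[\al]$ lies in $\sum_{k\le m}\cR B_k$, so this set spans $F_m$. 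Since it is a subset of the basis $B(\ori;\fS)$, it is automatically linearly independent, hence a basis of $F_m(\cSs(\fS))$.

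For part (b), I would combine part (a) for $m$ and for $m-1$: we have $F_m = \bigoplus_{k\le m}\cR B_k$ and $F_{m-1} = \bigoplus_{k \le m-1}\cR B_k$ as free $\cR$-modules, so the quotient $\Gr_m(\cSs(\fS)) = F_m/F_{m-1}$ is free with basis the image of $B_m$, and this image map is injective because $B_m$ is part of the basis $B(\ori;\fS)$ disjoint from $\bigsqcup_{k\le m-1}B_k$. The main (and really only) obstacle is the verification that the reduction relations do not increase the boundary-point count; this is a short case check, with the only mildly subtle point being relation \eqref{eq.order}/\eqref{eq.neg}, where one must confirm that the lower-complexity term $\reordthree$ also has strictly fewer boundary points than the left-hand side, so that it lands in $F_{m-1}$ rather than merely in $F_m$. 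Everything else is bookkeeping with free modules.
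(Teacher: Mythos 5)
Your proof is correct and is exactly what the paper has in mind: the paper states Proposition \ref{r.basis2} as a direct consequence of Theorem \ref{thm.basis1a} with no written argument, and you have supplied the missing details in the natural way, namely that the defining relations \eqref{eq.skein}--\eqref{eq.order} (and \eqref{eq.neg}) never increase the number of boundary points, so the reduction to the basis $B(\ori;\fS)$ respects the filtration, after which parts (a) and (b) follow from elementary facts about free modules. One small remark: for part (a) you do not actually need the sharper observation that $\reordthree$ has \emph{strictly} fewer boundary points than the left-hand side of \eqref{eq.order}; it suffices that no term on the right-hand side of any relation has \emph{more} boundary points than the left-hand side, since then the reduction of any $\al$ with $|\partial\al|\le 2m$ stays inside $\bigcup_{k\le m}\cR B_k$, and part (b) is then the formal statement that a quotient of a free module by the span of a subset of its basis is free on the complementary subset.
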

\def\Ed{{\cE_\partial}}
\def\ld{\mathrm{lt}}
  \subsection{Grading}  \lbl{sec.grading} For a boundary edge $b$ and a stated  $\pfS$-tangle $\al$ define
  $$ \bdelta_\al(b) =   \sum_{u \in \partial_{b}(\al)} s(u) \in \BZ,$$
where, as usual, we identify $+$ with $+1$ and $-$ with $-1$. Let $\cE_\partial$ be the set of all boundary edges.
Then $\bdelta_\al \in \BZ^{\Ed}$, the set of all maps $\Ed\to \BZ$.

For $\bk\in \BZ^\Ed$ let $G_\bk= G_\bk(\cSs(\fS))$ be the $\cR$-submodule of $\cSs(\fS)$ spanned by all
stated $\pfS$-tangles $\al$ such that $\bdelta(\al)=\bk$.
From the defining relations it is clear that $\cSs(\fS)= \bigoplus_{\bk \in \BZ^l} G_\bk$ and $G_\bk G_{\bk'} \subset G_{\bk + \bk'}$. In other words,  $\cSs(\fS)$ is a graded algebra with the grading $\{ G_\bk, k\}$.

Fix a boundary edge $b$. For $k\in \BZ$ let  $G_{b,k}= G_{b,k}(\cSs(\fS))$ be the $\cR$-submodule of $\cSs(\fS)$ spanned by all
stated $\pfS$-tangles $\al$ such that $\bdelta_\al(b)=k$.  Again $\cSs(\fS)= \bigoplus_{\ \in \BZ} G_{b,k}$ and $G_{b,k} G_{b,k'} \subset G_{b,k+k'}$. In other words,  $\cSs(\fS)$ is a $\BZ$-graded algebra with the grading $\{ G_{b,k}\}$.
  The following is a consequence of Theorem \ref{thm.basis1a}.
\begin{proposition}\lbl{r.basis3}
Le $\fS$ be a punctured bordered surface and $\ori$ be an orientation of $\pfS$.\\
 (a) The set $\{ \al \in B(\ori;\fS) \mid \bdelta_\al=\bk\} $ is an $\cR$-basis of $G_\bk(\cSs(\fS))$.\\
 (b) The set $\{ \al \in B(\ori;\fS) \mid \bdelta_\al(b)=k\} $ is an $\cR$-basis of $G_{b,k}(\cSs(\fS))$.

\end{proposition}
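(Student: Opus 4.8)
The plan is to deduce Proposition~\ref{r.basis3} from Theorem~\ref{thm.basis1a} via the elementary principle that a basis of a graded module consisting of homogeneous elements restricts to a basis of each graded summand. The first step is to record the one compatibility that makes this work: every diagram $\al \in B(\ori;\fS)$ is homogeneous for both gradings of Section~\ref{sec.grading}, that is $\al \in G_{\bdelta_\al}(\cSs(\fS))$ and, for a fixed boundary edge $b$, $\al \in G_{b,\bdelta_\al(b)}(\cSs(\fS))$. This is immediate, since $\bdelta_\al$ depends only on the states $\al$ carries on $\partial\al$. Hence the basis $B(\ori;\fS)$ is the disjoint union over $\bk \in \BZ^{\Ed}$ of the subsets $B_\bk := \{\al \in B(\ori;\fS) \mid \bdelta_\al = \bk\}$.

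For part (a) I would prove the two inclusions. The span of $B_\bk$ is contained in $G_\bk(\cSs(\fS))$ directly from the definition of $G_\bk$. Conversely, given $x \in G_\bk(\cSs(\fS))$, Theorem~\ref{thm.basis1a} lets me write $x = \sum_{\al \in B(\ori;\fS)} c_\al\, \al$ uniquely; then I apply the projection $\cSs(\fS) \onto G_\bk$ coming from the direct sum decomposition $\cSs(\fS) = \bigoplus_{\bk'} G_{\bk'}$ established in Section~\ref{sec.grading}. Since each $\al$ lies in $G_{\bdelta_\al}$, this projection annihilates every term with $\bdelta_\al \ne \bk$ and fixes $x$, so $x = \sum_{\al \in B_\bk} c_\al\, \al$; thus $B_\bk$ spans $G_\bk$. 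Linear independence of $B_\bk$ is inherited from that of $B(\ori;\fS)$, and (a) follows.

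For part (b) the quickest route is to note that, by the definitions and the same direct sum decomposition, $G_{b,k}(\cSs(\fS)) = \bigoplus_{\bk\,:\,\bk(b) = k} G_\bk(\cSs(\fS))$, so an $\cR$-basis of $G_{b,k}$ is obtained by taking the union of the bases furnished by part (a), which is exactly $\{\al \in B(\ori;\fS) \mid \bdelta_\al(b) = k\}$; alternatively one repeats the projection argument of part (a) verbatim with the coarser $\BZ$-grading $\{G_{b,k}\}$ in place of the $\BZ^{\Ed}$-grading. There is essentially no obstacle here: the only input beyond Theorem~\ref{thm.basis1a} is the well-definedness of the gradings (that the defining relations \eqref{eq.skein}--\eqref{eq.order} are $\bdelta$-homogeneous) and the homogeneity of the basis elements, both already observed in Section~\ref{sec.grading}; the remainder is the routine bookkeeping of projecting onto a graded summand.
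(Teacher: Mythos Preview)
Your argument is correct and is essentially the same approach the paper takes: the paper simply asserts that the proposition is a consequence of Theorem~\ref{thm.basis1a}, and your projection argument is exactly the standard unpacking of that implication.
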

The $\BZ$-grading    $\{ G_{b, k}\}$ allows to define the {\em  $b$-leading term}
$\ld_b(x)$ of non-zero $ x \in \cSs(\fS)$. Suppose $x = \sum_j c_j D_j$, where $0\neq c_j \in \cR$ and
$D_j\in B(\ori;\fS)$. Assume $k= \max_j \bdelta_{D_j}(b)$. Define
\be
\ld_b(x)= \sum_{\bdelta_{D_j}(b) =k} c_j D_j.
\ee
\subsection{The ordinary skein algebra $\ooS(\fS)$}  Recall that the ordinary skein algebra $\ooS(\fS)$ is the $\cR$-module freely spanned by isotopy classes of framed links in $\fS \times (0,1)$ modulo the skein relation \eqref{eq.skein} and the trivial loop relation \eqref{eq.loop}.
The map $\id_*: \ooS(\fS) \to \cSs(\fS)$, defined on a framed link $\al$ by $\id_*(\al)= \al$, is an $\cR$-algebra homomorphism.
\begin{corollary} The map  $\id_*: \ooS(\fS) \to \cSs(\fS)$ is an embedding, and $\id_*(\ooS(\fS))= F_0(\cSs(\fS))$.
\end{corollary}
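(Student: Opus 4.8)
The plan is to deduce this directly from the basis results already established, namely Theorem~\ref{thm.basis1a} and Proposition~\ref{r.basis2}(a) with $m=0$. First I would fix the orientation $\ori$ coming from $\fS$, so that $B(\ori;\fS)$ is the set $B(\fS)$ of increasingly stated, positively ordered, simple $\pfS$-tangle diagrams, which by Theorem~\ref{thm.basis} is an $\cR$-basis of $\cSs(\fS)$. A framed link in $\fS\times(0,1)$ is a $\pfS$-tangle with $\partial\al=\emptyset$; so the ordinary skein algebra $\ooS(\fS)$ is, by exactly the same diamond-lemma argument (or simply by the classical fact, cf. Theorem~\ref{thm.basis} restricted to closed diagrams), the free $\cR$-module on the set $B_0$ of simple closed $\pfS$-tangle diagrams — those $D\in B(\fS)$ with $\partial D=\emptyset$. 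The map $\id_*$ sends each such basis element of $\ooS(\fS)$ to the corresponding basis element of $\cSs(\fS)$.

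Next I would observe that $\id_*$ carries the basis $B_0$ of $\ooS(\fS)$ bijectively onto the subset $\{\,\al\in B(\fS)\mid \partial\al=\emptyset\,\}=\{\,\al\in B(\fS)\mid |\partial(\al)|\le 0\,\}$ of the basis $B(\fS)$ of $\cSs(\fS)$. Since a map of free $\cR$-modules sending a basis injectively into a basis is automatically injective, $\id_*$ is an embedding. Moreover, by Proposition~\ref{r.basis2}(a) with $m=0$, this same subset is precisely an $\cR$-basis of $F_0(\cSs(\fS))$; hence the image $\id_*(\ooS(\fS))$, being the $\cR$-span of that subset, equals $F_0(\cSs(\fS))$.

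The only point needing a word of care — and what I would treat as the main (minor) obstacle — is justifying that $\ooS(\fS)$ is free on $B_0$. This is where one must check that no defining relation of $\cSs(\fS)$ beyond \eqref{eq.skein}--\eqref{eq.loop} is needed to reduce a closed diagram: relations \eqref{eq.arcs} and \eqref{eq.order} involve boundary points, so they never apply to a diagram with $\partial D=\emptyset$, and the reduction process of Lemma~\ref{r.p2} applied to a closed diagram stays within closed diagrams. Thus the diamond-lemma computation for $\cSs(\fS)$, restricted to the sub-chain complex of closed diagrams, reproduces exactly the classical presentation of $\ooS(\fS)$, giving $B_0$ as its basis. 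With this in hand the corollary follows as above; I would write this out in two or three sentences rather than redoing the diamond lemma.
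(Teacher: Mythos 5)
Your proposal is correct, and the mathematical content is the same as the paper's: everything reduces to knowing that the set of simple closed diagrams is a basis of $\ooS(\fS)$, which together with Theorem~\ref{thm.basis} (equivalently, Proposition~\ref{r.basis2}(a) at $m=0$) gives both the injectivity of $\id_*$ and the identification of its image with $F_0(\cSs(\fS))$. The one place you differ is in how you establish that $\ooS(\fS)$ is free on the closed part $B_0$ of $B(\fS)$: you argue directly that the boundary relations \eqref{eq.arcs}--\eqref{eq.order} never fire on a diagram with $\partial D=\emptyset$, so the diamond-lemma reduction for $\cSs(\fS)$ restricted to closed diagrams recovers the classical presentation of $\ooS(\fS)$. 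The paper sidesteps this check by a small formal trick: it introduces $\ofS:=\fS\setminus\pfS$, which is again a punctured bordered surface (with empty boundary), observes that $\ooS(\fS)=\cSs(\ofS)$ because closed tangles and their isotopies can be pushed off the boundary, and then simply invokes Theorem~\ref{thm.basis} for $\ofS$. The paper's move buys a one-line proof with no re-inspection of the rewriting system; your version buys self-containment at the cost of a short verification. Both are fine, but if you adopt the paper's device you should still make explicit the easy identification $\ooS(\fS)=\cSs(\ofS)$, which you currently leave implicit.
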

\begin{proof} The manifold $\ofS:=\fS \setminus \pfS$ is also a punctured bordered surface without boundary. It is clear that $\ooS(\fS) = \cSs(\ofS)$, and the $\cR$-basis of the latter described by Theorem \ref{thm.basis} is the $\cR$-basis of $F_0(\cSs(\fS))$. The result follows.
\end{proof}
\section{Decomposing and gluing punctured bordered  surfaces}
\lbl{sec.decomposing}

\def\tpr{\widetilde{\pr}}
\subsection{Gluing punctured bordered surfaces}    \lbl{sec.31}
Suppose $a$ and $b$ are distinct boundary edges of a punctured bordered surface $\fS$. Let $\fS'= \fS/(a=b)$, the result of gluing $a$ and $b$ together in such a way that the  orientation is compatible. The canonical projection $\pr: \fS \to \fS'$  induces a projection $\tpr: \fS \times (0,1) \to \fS'\times (0,1)$.
Let $c= \pr(a)=\pr(b)$. %We also say that $\fS$ is the result of cutting $\fS'$ along $c$.

 A $\pfS'$-tangle  $\al\subset (\fS' \times (0,1))$,  is said to be {\em vertically transverse to $c$} if
 \begin{itemize}
 \item $\al$ is transversal to $c \times (0,1)$,
 \item the points in $\partial_c\, \al:= \al \cap (c \times (0,1))$ have distinct heights, and have vertical framing.
 \end{itemize}
 Suppose $\al$ is a $\pfS'$-tangle vertically transversal to $c$. Then $\tal:=\tpr^{-1}(\al)$ is a $\pfS$-tangle. Suppose in addition $\al$ is stated, with state $s: \partial \al \to \{\pm\}$. For any $\bove: \al \cap (c \times (0,1)) \to \{ \pm \}$ define $\tal(\bove)$ be $\tal$ equipped with state $\tilde s$ defined by $\tilde s (x) = s(\pr(x))$ if $\pr(x) \in \partial \al$ and $\tilde s (x) = \bove(\pr(x))$ if $\pr(x) \in c$. We call $\tal(\bove)$  {\em a lift} of $\al$. If $|\al \cap (c \times (0,1))|=k$, then  $\al$ has $2^k$ lifts.

\subsection{Proof of Theorem \ref{thm.I}} For the reader convenience
we reformulate Theorem \ref{thm.I} here.

\begin{theorem} \lbl{thm.1a}
Suppose $a$ and $b$ are two distinct boundary edges of a punctured bordered surface $\fS$. Let $\fS'=\fS/(a=b)$, and $c$ be the image of $a$ (or $b)$ in $\fS'$.

(a)
There is a unique $\cR$-algebra homomorphism
$ \rho : \cSs(\fS') \to \cSs(\fS)$
such that if $\al$ is a stated $\pfS'$-tangle  vertically transversal to $c$, then
$\rho (\al)=\sum_\beta [\beta]$, where the sum is over all lifts $\beta$ of $\al$, and $[\beta]$ is the element in $\cSs(\fS)$ represented by $\beta$.
\no{
$$ \rho (\al) = \sum_{\beta \in  \{ \text{lifts of } \al \} } \beta.$$
}

(b) In addition, $\rho $ is injective.

(c) For 4 distinct boundary edges $a_1,a_2,b_1,b_2$ of $\fS$, the following diagram is commutative:
\be
\lbl{eq.dia2}
 \begin{CD} \cSs(\fS/(a_1=b_1,a_2=b_2))  @> \rho  >>  \cSs(\fS/(a_1=b_1))\\
 @V
 \rho    VV  @V V \rho   V    \\
  \cSs((\fS/(a_2=b_2)    @> \rho  >>    \cSs(\fS) .
\end{CD}
\ee
\end{theorem}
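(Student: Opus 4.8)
The strategy is to build $\rho$ on the level of the free module of stated tangles, show it respects all the defining relations, and then verify injectivity and commutativity using the bases provided by Theorem~\ref{thm.basis1a}. For part~(a), every stated $\pfS'$-tangle can be isotoped to be vertically transverse to $c$, so the formula $\al \mapsto \sum_{\bove}\tal(\bove)$ defines a candidate $\cR$-linear map $\cL' \to \cSs(\fS)$ on the free module $\cL'$ spanned by such diagrams; the first task is to check this does not depend on the choices made in the isotopy, i.e.\ that it is invariant under isotopy of $\al$ in $\fS'$ (including isotopies that sweep a strand across $c$, push a kink through $c$, or exchange heights of two consecutive points on $c$). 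Invariance under moves away from $c$ is immediate since $\tpr$ is a local diffeomorphism there; the only substantive point is that a strand crossing $c$ transversally, after summing over the two states $\pm$ at the new intersection point, is insensitive to small isotopies near $c$ — this is exactly where relations \eqref{eq.arcs} and \eqref{eq.order} on the $\fS$ side are needed, together with Lemma~\ref{r.arcs} and Lemma~\ref{r.refl}. Once well-definedness is established, one checks that $\rho$ kills each defining relation of $\cSs(\fS')$: the skein and trivial-loop relations are local and lift relation-by-relation (summing over states on $c$ commutes with a local resolution), while the boundary relations \eqref{eq.arcs}, \eqref{eq.order} at a boundary edge $e \ne a,b$ of $\fS'$ lift to the corresponding boundary relations of $\fS$. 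That $\rho$ is an algebra map follows because stacking in the $(0,1)$-direction commutes with $\tpr^{-1}$ and with the state-summation over $c$. Uniqueness is clear since such tangles span $\cSs(\fS')$.

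\textbf{Injectivity (part b).} For part~(b) the plan is to exhibit a section, or at least a filtration-compatible comparison with explicit bases. Give $\cSs(\fS')$ the filtration by $|\partial_c \al|$ (number of intersections with $c$) --- more precisely, use the $\BZ$-grading $G_{c',k}$ where one regards the images under $\rho$ graded by total height data along $a$ (equivalently along $b$). A cleaner route: choose the orientation $\ori$ on $\pfS$ so that on $a$ and $b$ it is \emph{opposite}, so that under $\pr$ the two edges glue into a consistently oriented $c$; then $\ori$ descends to an orientation $\ori'$ of $\pfS'$. By Theorem~\ref{thm.basis1a}, $\cSs(\fS')$ has basis $B(\ori';\fS')$ and $\cSs(\fS)$ has basis $B(\ori;\fS)$. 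For $D \in B(\ori';\fS')$ made transverse to $c$ with $|D \cap c| = k$, the lift $\tpr^{-1}(D)$ is a simple diagram in $\fS$ (no new crossings, no trivial components created), and its $2^k$ states indexed by $\bove$ are \emph{not} increasing in general, but re-ordering each one into increasing form via Lemma~\ref{r.refl} produces, as leading term (top $b$-degree), precisely the increasing state whose values on $a,b$ are determined by $\bove$ sorted. Thus the matrix of $\rho$ in these bases is, after grouping by the underlying unstated diagram and its intersection pattern with $c$, block upper-triangular with invertible (indeed monomial, unit) diagonal blocks --- the diagonal entries being powers of $q^{\pm 1/2}$ from the height-exchange moves. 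Hence $\rho$ is injective. The main obstacle here is bookkeeping: carefully matching a lift $\tpr^{-1}(D)$ with the sorted basis element of $B(\ori;\fS)$ it maps to, and checking that distinct $(D,\bove)$ with $D\in B(\ori';\fS')$ give distinct leading basis elements --- this uses that the states on $a\cup b$ recover $\bove$ and that $D$ is recovered by cutting $\fS$ back along $c$.

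\textbf{Commutativity (part c).} For part~(c), note that gluing $a_1=b_1$ and $a_2=b_2$ are operations along \emph{disjoint} pairs of boundary edges, so the two composite surfaces $\fS/(a_1=b_1,a_2=b_2)$ agree whichever order one glues, and the corresponding projections $\pr_1,\pr_2$ commute: $\pr_2 \circ \widetilde{\pr_1} = \widetilde{\pr_1}' \circ \pr_2'$ at the level of thickened surfaces. For a stated $\pfS/(a_1=b_1,a_2=b_2)$-tangle $\al$ isotoped to be simultaneously vertically transverse to both images $c_1,c_2$ (possible since $c_1,c_2$ are disjoint), both ways around the square \eqref{eq.dia2} send $\al$ to $\sum_{\bove_1,\bove_2} \widetilde{\al}(\bove_1,\bove_2)$, the sum over all states on $\al\cap(c_1\times(0,1))$ and $\al\cap(c_2\times(0,1))$ of the full lift --- the two double-sums being literally the same iterated sum, reordered. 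Since such $\al$ span the top-left algebra and all four maps are $\cR$-algebra homomorphisms, the diagram commutes. The only care needed is that the intermediate lift in each path is again vertically transverse to the remaining curve, which holds because $\tpr^{-1}$ of a transverse-to-$c_2$ diagram is still transverse to $\pr_1^{-1}(c_2)$. I expect part~(b), the injectivity via leading-term analysis against the $B(\ori;\fS)$ basis, to be the principal technical burden; parts~(a) and~(c) are essentially formal once the well-definedness in~(a) is nailed down.
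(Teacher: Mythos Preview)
Your approach to parts~(a) and~(c) matches the paper's. For~(a), the paper organizes the isotopy-invariance check around three local moves near $c$ (called IIa, IIIa, IV there) and shows that invariance under IIa --- precisely the computation using \eqref{eq.arcs}, \eqref{eq.order}, and Lemma~\ref{r.arcs} that you allude to --- already implies the other two via the skein relation, so Lemma~\ref{r.refl} is not needed separately. Part~(c) is immediate from the definition, as you say.

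Your argument for~(b) has a genuine gap. The claim that ``distinct $(D,\bove)$ give distinct leading basis elements'' is false: if $\bove_1,\bove_2:D\cap c\to\{\pm\}$ have the same multiset of values, then after sorting to increasing form on $a$ and on $b$ the resulting stated diagrams coincide, so their leading terms collide and ``the states on $a\cup b$ recover $\bove$'' fails after sorting. The matrix of $\rho$ is therefore not monomial in the way you describe, and it is not clear from your outline that these collisions cannot produce cancellation. The paper sidesteps this entirely by tracking only the single lift of top $a$-degree (in the grading $G_{a,\bullet}$ of Section~\ref{sec.grading}): among all $\tal(\bove)$ the unique one with maximal $a$-degree is $\tD(+)$, the all-$+$ lift, which is automatically increasingly stated and --- provided $D$ has first been isotoped to \emph{$c$-normal position}, i.e.\ $|D\cap c|=\mu(D,c)$ --- lies directly in $B(\ori;\fS)$. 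Since $D\mapsto\tD(+)$ is injective, one gets $\ld_a(\rho(x))=\sum_{\mu(D_j,c)=k}c_j\,\tD_j(+)\neq 0$ for $0\neq x=\sum c_j D_j$ with $k=\max_j\mu(D_j,c)$. You also omitted this $c$-normality step; without it your assertion that $\tpr^{-1}(D)$ has ``no trivial components created'' can fail, since a non-minimal intersection of $D$ with $c$ produces trivial returning arcs in the split diagram.
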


\def\id{\mathrm{id}}
\def\cD{\mathcal D}
\def\cDo{\mathcal D^{\mathrm {ord}}}
\def\iso{\mathrm{iso}}
\begin{proof} (a) %Denote $\fS'=\fS/(a=b)$ and $c=\pr(a)=\pr(b)$.
Let $T(c)$ be the set of all stated $\pfS'$-tangles vertically transverse to $c$
 (no isotopy is considered here), and $V$ be the set of all isotopy classes of stated $\pfS'$-tangles.
 The  map $\iso: T(c) \to V$, sending an element in $T(c)$ to its isotopy class as a stated $\pfS'$-tangle,
 is surjective.
Define
$$ \ti: T(c) \to \cSs(\fS), \quad \ti(\al) = \sum_{\beta: \ \text{lifts of }  \al} [\beta].$$

{\bf Claim 1.}  If $\al, \al'\in T(c) $ and $\iso(\al)=\iso(\al')$, then $\ti (\al)= \ti (\al')$.
% (as  elements in $\cSs(\fS)$).

Suppose the claim holds. Then $\ti$ descends to a map $\rho ': V \to \cSs(\fS)$.
Recall $\cSs(\fS')$ is defined as the $\cR$-span of $V$ modulo the defining relations \eqref{eq.skein}--\eqref{eq.order}.
The locality (of these defining relations) shows that $\rho'$ respects the defining relations. Hence $\rho'$ descends to an $\cR$-homomorphism  $\rho : \cSs(\fS') \to \cSs(\fS)$, which is clearly an $\cR$-algebra homomorphism.

It remains to prove Claim 1. We break the proof into steps.
%It is more convenient to work with tangle diagrams.

\def\tD{\tilde D}

{\bf Step 1.} Let $\cD(c)$ be the set of all      %   ($c$-isotopy classes of)
stated $\pfS'$-tangle diagrams transversal to $c$. Each $D\in \cD(c)$ is equipped with the positive boundary order. For a total order $\cO$ on $D \cap c$ and a map $\bove: D \cap c \to \{\pm \}$ let $\tD(\cO,\bove)$ be the stated  $\pfS$-tangle diagram obtained from $D$ by splitting along $c$. Here the height order and the states on $a$ and $b$ are the lifts of $\cO$ and $\bove$, while the height order and the states on other boundary edges are the lifts of the corresponding ones of $D$.  Define
$$ \ti(D,\cO) = \sum_{\bove} \tD(\cO,\bove), \quad \text{sum is over all maps $\bove: D \cap c \to \{\pm \}.$ }$$

{\bf Step 2.} Recall that $\al\in T(c)$, i.e. $\al$ is a stated $\pfS'$-tangle vertically transversal to $c$. A small smooth isotopy, keeping framing vertical on $c \times (0,1)$, does not move $\al$ out of $T(c)$, and does not change $\ti(\al)$.
Thus, after a small smooth isotopy of this type we can assume that
 $\al$ has  a stated $\pfS'$-tangle diagram $D\in \cD(c)$.  The height order on $\al \cap c \times (0,1)$ induces  a total order $\cO$ on $D \cap c$.  From the definition, $\ti(\al) = \ti(D,\cO)$.

 Similarly, $\al'$ is presented by $(D', \cO')$, where $D'\in \cD(c)$ and $\cO'$ is a total order on $D'\cap c$. To prove the claim, we need to show that $\ti(D,\cO)=\ti(D', \cO')$.

{\bf Step 3.} Recall that $\al$ and $\al'$ are isotopic and their diagrams $D,D'$ are transersal to $c$.
By considering
Reidemeister moves involving $D \cup c$, we see that
 $D'\cup c $ can be obtained from $D\cup c$ by a sequence of moves, each is \\
\noindent (i) a Reidemeister move RI, RII, or RIII not involving $c$; or \\
(ii) move IIa  (which involves $c$) as shown in Figure \ref{fig:R23a}; or \\
(iii)  move IIIa  (which involves $c$) as shown in Figure \ref{fig:R23a}; or \\
(iv) move IV which reorders   the total order on $D \cap c$, see Figure \ref{fig:R23a};\\
or an isotopy of $\fS'$ which fixes $c$ as a set during the isotopy.

\FIGc{R23a}{Move IIa (left), Move IIIa (middle), and Move IV (right). The vertical line is part of $c$, and the arrow indicates the order.}{1.5cm}
 It is clear that $\ti$ is invariant under isotopy of $\fS'$ which
 fixes $c$ and moves RI, RII, and RIII not involving $c$. We will see that if $\ti$ is invariant under move IIa, then it
is invariant under all other moves.

{\bf Step 4.}
Now we show that $\ti$ is invariant under move IIa.
We have
\begin{align*}
 \ti \pfRttb
&= \pfRttthree + \pfRttone + \pfRtttwo + \pfRttfour\\
&= -q^{5/2} \pfRttd + q^{1/2}\pfRttdn \\
&= -q^{5/2} \pfRttd + q^{1/2} \left   ( q^2 \pfRttd + q^{-1/2} \pfRtte \right)\\
&= \pfRtte = \ti \pfRtta.
\end{align*}
where the second identity follows from the values of trivial arcs given by \eqref{eq.arcs} and Lemma~\ref{r.arcs}, while the  third identity follows from \eqref{eq.order}.
Thus, $\ti$ is invariant under Move IIa.

{\bf Step 5.} Now we show that the invariance of Moves IIIa and IV follows from the invariance of Moves RI, RII, RIII (not involving $c$), and IIa.
Consider IIIa. Using the skein relation, we have
\begin{align}
\lbl{eq.11a} \ti \RthreeD &= q \, \ti \RthreeDpositive + q^{-1} \, \ti \RthreeDnegative   \\
\lbl{eq.11b} \ti \RthreeDp &= q \, \ti \RthreeDpositive + q^{-1} \, \ti \RthreeDpnegative.
\end{align}
The right hand sides of \eqref{eq.11a} and \eqref{eq.11b} are equal by move IIa, proving the invariance of IIIa.

Finally consider Move IV. Using the skein relation at the two crossings, we have
\begin{align*}
%\lbl{eq.13a}
 \ti \Rfoura &=  \ti \RthreeDpositive +   \ti \Rfourb
q^{-2} \, \ti \RthreeDnegative +
  q^2 \,  \ti \RthreeDpnegative.
\end{align*}
Using Move IIa in the last 3 terms of the right hand side, then the trivial loop relation which says a trivial knot is $-q^2-q^{-2}$, we get
\begin{align*}
%\lbl{eq.13a}
\ti \Rfoura &=  \ti \RthreeDpositive,
\end{align*}
which proves that $\ti$ is invariant under Move IV. This completes the proof of part (a).

\def\tB{\tilde B}
\def\ld{\mathrm{lt}}
\def\pp{\mathrm{qr}}
(b) Fix an orientation $\ori'$ of $\pfS'$ and an orientation of $c$. Define the orientation  $\ori$ of $\pfS$ such that the map $\pr: \fS\to \fS'$ preserve the orientation on edge boundary edge of $\fS$.
We will equip any  $\pfS'$-tangle diagram (resp. $\pfS'$-tangle diagram)  with the $\ori'$-order (resp. $\ori$-order).

 % Recall that the $b$-leading term $\ld_b(x)$, for $x\in \cSs(\fS)$, is defined in Section \ref{sec.grading}.

Suppose $D$ is an increasingly stated $\pfS'$-tangle diagram transversal to $c$. Let $\tD(+)$ be the lift of $D$ in which all the state of every endpoint in $a$ (and hence in $b$) is $+$. Note that the state of $\tD(+)$ is also  increasing.
In general, $\tD$ may not be simple. After an isotopy we can assume that  $D$ is  {\em $c$-normal}, i.e.  $|D \cap c|=\mu(D,c)$, which is the smallest integer among all $|D'\cap c|$ with $D'$ isotopic to $D$. Then  $\tD=\pr^{-1}(D)$ is  a simple $\pfS$-tangle diagram, and  from the definition of $\rho$ and the $b$-leading term (see Section \ref{sec.grading}), we have
\be
\lbl{eq.lt1}
\ld_a(\rho(D)) = \tD(+) \quad \text{in \ }\  \cSs(\fS).
\ee
In particular, the isotopy class of  $\tD(+)$ does not depend on how we isotope $D$ to a $c$-normal  position. Note that the isotopy class of $\tD(+)$ totally determine the isotopy class of $D$, i.e. the following  map   is injective:
\be
\lbl{eq.liftp}
B(\ori';\fS')\to B(\ori;\fS), \quad  D\to \tD(+). \ee

Suppose $0\neq x\in \cSs(\fS')$. Then $x = \sum_j c_j D_j$, where $0\neq c_j \in \cR$ and  $D_j\in B(\ori';\fS')$. Assume $\max_j \mu(D_j, c)=k$.   From \eqref{eq.lt1} and the injectivity of the map \eqref{eq.liftp}, we have
\be
\lbl{eq.lt2}
\ld_a(\rho(x)) = \sum_{\mu(D_j, c)=k} c_j \tD_j(+) \neq 0.
\ee
 This proves $\rho(x)\neq 0$, and $\rho$ is injective.

(c) The commutativity of Diagram \eqref{eq.dia2} follows immediately from the definition.
\end{proof}
\subsection{Triangular decomposition} \lbl{sec.tri}  Suppose a punctured bordered surface $\fS$ is obtained by removing a finite set $\cP$ from a compact oriented surface $\bfS$.

Suppose $\D$ is an {\em ideal triangulation} of  $\fS$, i.e.   a triangulation of $\bfS$ whose vertex set is exactly $\cP$. % Let $\cE=\cE(\D)$ be the set of all edges of $\D$.
By cutting along all the edges of $\D$, we see that
 there is a finite collection $\tF=\tF(\D)$ of disjoint ideal triangles and a finite collection of disjoint pairs of elements in $\tE=\tE(\D)$, the set of all edges of ideal triangles in $\tF$, such that $\fS$ is obtained from $\tS:=\bigsqcup_{\fT \in \tF} \fT$ by gluing the two edges in each pair.  It may happen that two edges of one triangle are glued together.

 %The natural projection $\pr: \tS \to \fS$ maps $\tE$ onto $\cE=\cE(\D)$, and maps $\tF$ onto the set $\cF=\cF(\D)$ of faces of $\D$.
From  Theorem \ref{thm.1a} we have  an injective algebra homomorphism
\be
\lbl{eq.tri}
\rho_\D: \cSs(\fS) \embed \bigotimes_{\tau \in \cF(\D)} \cSs(\fT).
\ee
 The map $\rho_\D$ is  described explicitly by Theorem \ref{thm.1a}. It is natural now to study the stated skein algebra of an ideal triangle.
%Thus, if one has a representation of $\cSs(\fT)$, then one can construct a representation of $\cSs(\fS)$ by composing with $\rho_\D$.

     It is known that $\fS$ is {\em triangulable}, i.e. it has a triangulation, if and only if
$|\cP|\ge 1$  and $(\bfS, \cP)$ is not one of the followings: (i) $\bfS$ is a sphere with $|\cP|\le 2$, (ii)  $\bfS$ is a disk with $\cP\subset\pS$ and $|\cP| \le 2$.

\subsection{On the uniqueness of the defining relations}
\lbl{sec.uniq}
Suppose we modify the defining relations by replacing \eqref{eq.arcs} and \eqref{eq.order} with respectively the more general
\begin{align}
\lbl{a1}
\leftup\  & =\  z_1 \emptys\ , \qquad \leftupPP\ =z_2 \emptys\ , \quad \  \leftupNN \ = z_3 \emptys\   \\
\lbl{a2}  \reordone\ &=\  z_4 \reordtwo \ +\  z_5 \reordthree,
\end{align}
where $z_i\in \cR$. Then it is easy the set $B(\fS)$, still spans the new $\cSs(\fS)$. If we want (i) consistency: $B(\fS)$ is a basis of $\cSs(\fS)$ and (ii) decomposition: Theorem \ref{thm.1a} holds, then repeating the proofs we can find  exactly 4 solutions $(z_1, z_2, z_3, z_4, z_5)$. In all of them $z_2=z_3=0$. The four solutions are
\begin{align*}
z_1&=z_5= \ve q^{-1/2}, z_4= q^2, \quad \ve \in \{ \pm 1 \}\\
z_1&=z_5= \ve q^{-5/2}, z_4= q^{-2}, \quad \ve \in \{ \pm 1 \}.
\end{align*}

 The group $\BZ/2 \times \BZ/2$, generated by two commuting involutions, acts on the set of solutions as follows. The first involution replaces each diagram $\al$ with $k$ $+$ states in \eqref{a1} and \eqref{a2} by $(-1)^k \al$. The second involution  switches all the states from $-$ to $+$ and $+$ to $-$ in \eqref{a1} and \eqref{a2}. It is easy to see that each involution transforms a solution to another solution. Then all the 4 solutions are obtained from one of them, say the solution we used in \eqref{eq.arcs} and \eqref{eq.order}, by the action of this group $\BZ/2 \times \BZ/2$. In this sense our solution is unique.

\def\rep{{\mathrm{rep}}}

\begin{remark}
Using Theorem \ref{thm.1a} one can interpret the assignment $\fS\to \cSs(\fS)$ as a co-presentation of a certain modular operad of surfaces. For details of modular operads, see~\cite{Voronov}.
\end{remark}
\no{
\subsection{Operadic interpretation}
\lbl{sec.operad}

A punctured bordered surface is {\em labeled} if
 the boundary edges are numbered from 1 to $n$, the number of boundary edges. Two labeled bordered surface with punctures are {\em equivalent} if there is an orientation and labels preserving diffeomorphism from the first to the second.

 In each equivalence class of labeled, connected, punctured bordered surfaces fix a representative and denote by $\cO$ the set of all representative. For a collection of representatives $\fS_1,\dots,\fS_k$
 let $\fS_1 \otimes  \dots \otimes \fS_k$ be the labeled punctured surface which is the disjoint union of all the $\fS_i$, with the boundary edges renumbered so that they  begin with $\fS_1$, then move to $fS_2$, $\dots$, and eventually to $\fS_k$, and for each $i$, the numbering of boundary edges of $\fS_i$ is a shift of the original numbering.

 Suppose $\fS$ is a labeled punctured bordered surface. A marking of , and $\fS_{\rep}$ is the representative of its equivalence class. {\em A marking of $\fS$} is a labels preserving diffeomorphism $f: \fS \to \fS_{\rep}$, considered up to isotopy of $\fS_\rep$.

 Let $\cO(n)$ be the set of pairs $(\fS,f)$ consisting of a  labeled punctured bordered surface $n$ boundary edges and a marking $f$. Suppose $\fS_1\in \cO(n), \fS_2\in \cO_m$. Let $\fS_1 \otimes \fS_2$

 Define the following operations
\begin{align}
\otimes:& \cO(n) \times \cO(m) \to \cO(n+m) \\
G_{i,j}:& \cO(n) \times \cO(m) \to \cO(n+m-2), \quad 1\le i\le n, \ 1 \le j \le m \\
G_{i,j}: &\cO(n)  \to \cO(n-2) \quad 1\le i < j \le n
\end{align} by
\begin{align}
S_1 \otimes S_2 & = S_1 \sqcup S_2, %\quad \text{with labels of $S_2$ follow those of $S_1$.}
\\
G_{i,j}(S_1, S_2) &= S_1 \sqcup S_2 /(b_{i,1} = b_{j,2})  \\
G_{i,j}(S)&= S/(b_{i} = b_{j}).
\end{align}
(with natural labelings).
Modular operad, \cite{Voronov}.

Then the assignment $\fS\to \cSs(\fS)$ is a co-representation of the above operad.
}

\def\fB{\mathfrak B}
\def\pfB{\partial \mathfrak B}
\def\bequal{\overset \bullet =}
\section{Ideal bigon and ideal triangle} \lbl{sec:triangle}

\subsection{Definitions and notations}

An {\em arc} $\al$  in a punctured bordered surface $\fS$ is
a properly embedded submanifold diffeomorphic to $[0,1]$.
If the two end points of $\al$ are in the same boundary edge, we call $\al$ a {\em returning arcs}.

Suppose $s:\partial(\al) \to \{\pm\}$ is a state of a $\pfS$-tangle $\al$, where $\fS$ is a punctured bordered surface. A {\em permutation} of $s$ is any state of the form $s \circ \sigma$, where $\sigma: \partial(\al) \to \partial (\al)$ is a bijection such that if $x\in b$, where $b$ is a boundary edge, then $\sigma(x) \in b$. The only permutation of $s$ which is increasing is denoted by $\sincr$.

Suppose $s:\partial(\al) \to \{\pm\}$ is not increasing. There there is a pair $u,v\in \partial (\al)$ such that $u>v$, $u$ and $v$  are consecutive in the height order,  and $s(u)=-, s(v)=+$. The new state $s':\partial (\al)\to \{\pm \}$, which is equal to $s$ everywhere except $s'(u)=+, s'(v)=-$, is called a {\em simple positive permutation } of $s$.

For elements $x,y$ of an $\cR$-module, $x \bequal y$ will mean there is an integer $j$ such that $ x = q^{j/2} y$.

\subsection{Ideal bigon}
\FIGc{bigon}{From left to right: bigon, arc $\al$, $\al(2)$, $\al^2$, and $\al(-, +)$}{3cm}

Suppose $\fB$ is an {\em ideal bigon}, i.e. $\fB$ is obtained from a disk by removing 2 points on its boundary. Let $a$ and $b$ be the boundary edges of $\fB$. Let $\al$ be an arc whose two end points are not in the same boundary edge, and let $\al(k)$ be $k$ parallels of $\al$. See Figure \ref{fig:bigon}.
Unless otherwise stated, the order of each $\pfB$-tangle diagram is positive. For example, the diagram of $\al^2$ is different from  $\al(2)$ and is depicted in Figure \ref{fig:bigon}. For $\ve, \ve'\in \{\pm\}$ let $\al(\ve,\ve')$ be $\al$ equipped with the state $s$ such that $s(\al\cap a)=\ve, s (\al\cap b)= \ve')$. Recall that $C^\ve_{\ve'}$ is defined by \eqref{eq.Cve}.

\begin{theorem} \lbl{r.present.fB}
Let $\fB$ be an ideal bigon with the above notations. Then
$\cSs(\fB)$ is the $\cR$-algebra generated by $X =\{ \al(\ve, \ve') \mid \ve, \ve' \in \{ \pm \}\}$, subject to the relations
\begin{align}
\lbl{re1}
\al(\ve,-) \al(\ve',+) &= q^2 \al(\ve,+) \al(\ve', -) - q^{5/2} C^\ve_{\ve'}  \quad \forall \ve, \ve'
\in \{ \pm \} \\
\lbl{re2}
\al(-, \ve) \al(+,\ve') &= q^2 \al(+, \ve) \al(-,\ve') - q^{5/2} C^\ve_{\ve'} \quad \forall \ve, \ve'
\in \{ \pm \}.
\end{align}
\end{theorem}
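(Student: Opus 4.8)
The plan is to identify the abstractly presented algebra $\cA$ (generators $X$, relations \eqref{re1}, \eqref{re2}) with $\cSs(\fB)$ by comparing it against the explicit $\cR$-basis of $\cSs(\fB)$ furnished by Theorem \ref{thm.basis}, together with a normal-form (PBW-type) reduction inside $\cA$.

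\emph{Basis of $\cSs(\fB)$.} Since $\fB$ is a disk it carries no nontrivial loop, and a properly embedded arc with both endpoints on one boundary edge is trivial; hence the only simple $\pfB$-tangle diagrams are the parallel families $\al(k)$, $k\ge 0$. An increasing state on $\al(k)$ is determined by the number $i$ of $+$'s among the $k$ endpoints on $a$ and the number $j$ among those on $b$, with $0\le i,j\le k$; write $\al(k)_{i,j}$ for the resulting isotopy class. By Theorem \ref{thm.basis} these form an $\cR$-basis of $\cSs(\fB)$, and exactly $(m+1)^2$ of them have $2m$ boundary points. Resolving, via the skein relation \eqref{eq.skein}, the crossings produced by stacking copies of $\al$, and using that the returning arcs so created are trivial, one checks that a product of $k$ suitably stated copies of $\al$ equals $\al(k)_{i,j}$ up to a power of $q$ modulo diagrams with fewer strands; in particular $X$ generates $\cSs(\fB)$.

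\emph{The relations hold, and reduction in $\cA$.} The product $\al(\ve,-)\al(\ve',+)$ is a two-strand diagram whose state on $b$ is not increasing; applying the reordering relation \eqref{eq.order} at the two consecutive endpoints on $b$ yields $q^2\al(\ve,+)\al(\ve',-)$ together with a term in which the strands are joined near $b$, i.e.\ a trivial returning arc on $a$ with states $\ve,\ve'$, which by Lemma \ref{r.arcs} (eqs.\ \eqref{eq.arcs1}--\eqref{eq.arcs2}) evaluates to $-q^{5/2}C^\ve_{\ve'}$ times the empty tangle; this is \eqref{re1}, and \eqref{re2} follows by the same computation with $a$ and $b$ interchanged. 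Hence there is a surjective $\cR$-algebra homomorphism $\pi:\cA\onto\cSs(\fB)$. In $\cA$, relation \eqref{re2} moves each generator $\al(+,\cdot)$ to the left of each $\al(-,\cdot)$ at the cost of scalar (hence strictly lower-degree) terms; the instances of \eqref{re1} with $C^\ve_{\ve'}=0$ sort the $b$-states inside each block; and the instance $(\ve,\ve')=(+,-)$ rewrites $\al(+,-)\al(-,+)$ as $q^2\al(+,+)\al(-,-)$ modulo lower degree, so after re-sorting we may assume $\al(+,-)$ and $\al(-,+)$ do not both occur. A double induction (on word length, then on the disorder of the states) then shows $\cA$ is spanned by the standard monomials $\al(+,+)^a\al(+,-)^b\al(-,-)^c$ and $\al(+,+)^a\al(-,+)^b\al(-,-)^c$, of which there are $(m+1)^2$ in degree $m$.

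\emph{Conclusion.} Filtering $\cA$ by word length and $\cSs(\fB)$ by number of strands, $\pi$ is a filtered surjection, and by the leading-term computation above it carries the degree-$m$ standard monomials onto unit multiples of the $(m+1)^2$ distinct basis elements $\al(m)_{i,j}$. Thus $\Gr(\pi)$ is a surjection between free $\cR$-modules of equal finite rank in each degree, hence an isomorphism, so $\pi$ is an isomorphism. The main obstacle is this last step: one must verify that \eqref{re1}--\eqref{re2} alone suffice to reduce every word to a standard monomial (no hidden relations) and that the reduction terminates — a diamond-lemma-type verification whose bookkeeping of $q$-powers and lower-order corrections must be organized by a carefully chosen complexity function — and one must pin down the leading term of each standard monomial inside $\cSs(\fB)$, which is where resolving the stacking crossings and the triviality of returning arcs in the bigon enter.
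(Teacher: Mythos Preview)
Your proposal is correct and is essentially the paper's own argument: verify the relations in $\cSs(\fB)$, obtain a filtered surjection $\cA\onto\cSs(\fB)$, reduce words in $\cA$ to standard monomials modulo lower filtration, and match these against the basis of Theorem~\ref{thm.basis} via the leading-term computation (the paper's Lemma~\ref{r.orderb}). Your standard monomials $\al(+,+)^a\al(+,-)^b\al(-,-)^c$ and $\al(+,+)^a\al(-,+)^b\al(-,-)^c$ are exactly the paper's $(\al^m,s)$ with $s\in\Sti(m)$, and the diamond-lemma worry in your final paragraph is unnecessary---only \emph{spanning} of $\Gr_m(\cA)$ is required (your reduction gives this, terminating first by degree and then by $\inv(s)$), since linear independence is inherited from the known basis of the target.
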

\def\Sti{\St^\uparrow}

\begin{remark}
If $\tau_2$ is the rotation by $\pi$ about the center of $\fB$, so that $\tau_2(a)=b, \tau_2(b)=a$, then \eqref{re2} is the image of \eqref{re1} under $\tau_2$.
\end{remark}
\def\vt{\vartheta}
\def\bvt{\boldsymbol\vartheta}

\begin{proof} The proof is simple, but we want to give all details here, since we will use a similar proof for the case of an ideal triangle later. The first, and easy, step is to show that $\cSs(\fB)$ is generated by $X$ and the relations \eqref{re1}-\eqref{re2} are satisfied. Then, since we know an explicit $\cR$-basis of $\cSs(\fS)$, an upper estimate argument will finish the proof.

{\bf Step 1.}
For each $k \in \BN$, the set $\partial(k) := \partial (\al(k))$ consists of $k$ points in $a$ and $k$ points in $b$. Let $\St(k)$ be the set of all states $s: \partial(k) \to \{\pm \}$, and $\Sti(k)\subset \St(k)$ be the subset of all increasing states. For $s\in \St(k)$ let $\al(k,s)$ be the stated $\pfB$-tangle diagram, which is $\al(k)$ equipped with state $s$. Similarly, $(\al^k,s)$ is $\al^k$ equipped with $s$. Recall that we have an increasing filtration $\{ F_m(\cSs(\fB))\}$ of $\cSs(\fB)$ and its associated graded algebra $\Gr(\cSs(\fB))$.
By Proposition  \ref{r.basis2}, the set $\{\al(m,s) \mid  s \in \Sti(m)\}$ is an $\cR$-basis of $\Gr_m(\cSs(\fB))$.

By Lemma \ref{r.orderb}, $(\al^m,s) \bequal \al(m,s) \pmod {F_{m-1}(\cSs(\fB))}$, which shows that
$$B_m:=\{(\al^m,s) \mid   s \in \Sti(m)\}$$ is also an $\cR$-basis of $\Gr_{m}(\cSs(\fB))$. It follows that $\{(\al^k,s) \mid k \in \BN , s \in \Sti(k)\}$ is an $\cR$-basis of $\cSs(\fB)$. Since $(\al^k,s)$ is a monomial in the letters in $X$, we conclude that $X$ generates $\cSs(\fB)$ as an $\cR$-algebra.

\FIGc{bigon2a}{An application of Relation \eqref{eq.order}}{2cm}
Let us now prove \eqref{re1}. Apply \eqref{eq.order} as in Figure \ref{fig:bigon2a}, then use \eqref{eq.kink} to remove the kink; we~get
$$ \al(\ve,-) \al(\ve',+) = q^2 \al(\ve,+) \al(\ve', -) +  q^{-1/2} (-q^3) C^\ve_{\ve'},$$
which is \eqref{re1}. The proof of \eqref{re2} is similar.

{\bf Step 2.} Let $A$ be the $\cR$-algebra generated by $X$ subject to the relations \eqref{re1} and \eqref{re2}. Then $A$ is a filtered $\cR$-algebra, where the $m$-th filtration $F_m(A)$ is spanned by the set of monomials in $X$ of degree $\le m$.
The $\cR$-algebra map $\omega: A \to \cSs(\fB)$ defined by $\omega(x)=x$ for all $x \in X$, is a surjective homomorphism of filtered algebras, and induces an algebra homomorphism
$ \Gr(\omega): \Gr(A) \to \Gr(\cSs(\fB)).$
The set $M_m:=\{ \vt_1 \dots \vt_m \mid \vt_j \in X\}$ spans $\Gr_m(A)$.
Presenting each $\vt_j$ as a stated arc on $\fB$, we see that there is state $s\in \St(m)$ such that $\vt_1\dots \vt_m= (\al^m,s)$, and we use this to identify $M_m$ with the set $\{ (\al^m,s) \mid s\in \St(m)\}$.

{\bf Step 3.} Since the second term on the right hand side of \eqref{re1} has degree less than other terms, in  $\Gr(A)$ we have relation \eqref{re1}, with the second term of the right hand side removed:
$$ \al(\ve,-) \al(\ve',+) = q^2 \al(\ve,+) \al(\ve', -). $$
There are states $r,r'\in \St(2)$ such that the left hand side  and the right hand side of the above  are respectively  $(\al^2,r)$ and $(\al^2,r')$, and the above relation can be rewritten as
\be
\lbl{eq.cons1}
(\al^2, r) = q^2 (\al^2, r') \quad \text{in } \ \Gr(A).
\ee
The upshot is that $r'$ is a simple positive permutation of $r$, see Figure \ref{fig:bigon2a}.

{\bf Step 4.}
Let us show that the subset $M^\uparrow(m):=\{(\al^m, s) \mid s \in \Sti(m)\}$ spans $\Gr_m(A)$.
Suppose $\vt_1\dots\vt_m = (\al^m, s) \in M(m)$ with   $\inv(s) >0$.
Then there is a consecutive $s$-decreasing pair $(u,v)\in \partial(m)^2$. Both $u,v$ belong to the same boundary edge, say $b$.
Assume $u$ is an end point of $\vt_j$, then $v$ must be an end point of $\vt_{j+1}$. Then $\vt_j\vt_{j+1}$ look like in the left hand side of Figure \ref{fig:bigon2a}, i.e. $\vt_j\vt_{j+1}$ is exactly the left hand side of \eqref{re1}, or the left hand side of \eqref{eq.cons1}. Replacing  $\vt_j\vt_{j+1}$ by the right hand side of \eqref{eq.cons1},  we get
\be
(\al^m,s) \bequal (\al^m,s') \pmod{F_{m-1}(A)},
\ee
where $s'$ is a simple positive permutation of $s$. An induction on $\inv(s)$ shows that for any $s\in \St(m)$, we have
\be
(\al^m,s) \bequal (\al,\sincr) \pmod{F_{m-1}(A)},
\ee
which, in turns, shows that $M^\uparrow_m$ also spans $\Gr_m(A)$.

Since $\Gr(\omega)(M^\uparrow_m)= B_m$, which is  an $\cR$-basis of $\Gr_m(\cSs(\fB))$, $M^\uparrow_m$ is $\cR$-linearly independent. Thus,  $M^\uparrow_m$ is an $\cR$-basis of $\Gr_m(A)$, and  $\Gr(\omega): \Gr_m(A) \to \Gr_m(\cSs(\fB))$ is an isomorphism. It follows that $\omega: A \to \cSs(\fB)$ is an isomorphism.
\end{proof}

The following lemma is used in the proof of Theorem \ref{r.present.fB}, and we use notations there.
\begin{lemma} \lbl{r.orderb}
Suppose $D$ is a stated $\pfB$-tangle  diagram with $\partial D = \partial(k)$ and each component of $D$ is an arc. Let $s\in \St(k)$ be the state of $D$.

(a) If $D$ contains a returning arc, then, as an element in $\cSs(\fB)$,  $D\in F_{k-1}(\cSs(\fB))$.

(b) If  $D$ has no returning arcs, then, as elements in $\cSs(\fB)$,
$$ D \bequal \al(k,s) \pmod { F_{k-1}(\cSs(\fB))}.$$
\end{lemma}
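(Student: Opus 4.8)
The plan is to analyze a stated $\pfB$-tangle diagram $D$ all of whose components are arcs, by induction on the number of returning arcs and on the geometric complexity, reducing everything to the standard family $\al(k)$. First I would observe that in the ideal bigon $\fB$ there are, up to isotopy, only two kinds of arc components: a ``crossing arc'' joining $a$ to $b$, and a ``returning arc'' with both endpoints on the same boundary edge (say $b$, the other case being symmetric under the rotation $\tau_2$). For part (a), if $D$ contains a returning arc, I would take an innermost such arc $c$, so that $c$ together with a subinterval of the boundary edge bounds a disk in $\fB$ meeting no other component of $D$. At the two endpoints of $c$ the states take some values $\ve,\ve'\in\{\pm\}$; after using the height-exchange/reordering relations of Lemma \ref{r.refl} to make those two endpoints consecutive (which changes $D$ only by a power of $q^{1/2}$ and terms already in $F_{k-1}$ — note $|\partial(D)|=2k$ while any resulting lower term has at most $2k-2$ boundary points), I can apply Lemma \ref{r.arcs}, specifically \eqref{eq.arcs1} (or \eqref{eq.arcs2} if the returning arc comes in from below), to replace the innermost returning arc by the coefficient $C^\ve_{\ve'}\in\cR$ (possibly times $-q^3$) and the empty diagram on that disk. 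The resulting diagram has $2k-2$ boundary points, hence lies in $F_{k-1}(\cSs(\fB))$, and therefore so does $D$. This proves (a).

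For part (b), assume $D$ has no returning arcs, so every component is a crossing arc joining $a$ to $b$; there are $|D\cap a|=|D\cap b|=k$ of them. Since $\fB$ is a disk with two punctures, any such family of $k$ disjoint crossing arcs is isotopic (as an unstated diagram, rel boundary up to permuting the boundary points) to $\al(k)$ after removing crossings between the arcs: using the skein relation \eqref{eq.skein} at any crossing of two crossing arcs produces one term with fewer crossings that is again a family of crossing arcs, and one term in which a returning arc appears — by part (a) that second term lies in $F_{k-1}$. Inducting on the number of crossings, I get $D\bequal (\text{some}\ (\al(k),s'))\ \pmod{F_{k-1}}$ for an appropriate state $s'$ differing from $s$ only by reshuffling along the boundary; and the reordering of boundary points along $a$ and along $b$, carried out via Lemma \ref{r.refl}, costs only a power of $q^{1/2}$ modulo $F_{k-1}$. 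Finally, the state $s'$ can be replaced by $\sincr$ — but $\al(k,s)$ in the statement is the diagram with the given state $s$, so in fact I only need to track that the underlying unstated diagram becomes $\al(k)$ and the state is carried along unchanged up to permutation; since any two permutations of $s$ differ by $\bequal$ modulo $F_{k-1}$ (again Lemma \ref{r.refl}, together with \eqref{re1}/\eqref{eq.cons1}-type moves already established when two endpoints of different arcs get interchanged), we conclude $D\bequal \al(k,s)\pmod{F_{k-1}(\cSs(\fB))}$.

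The main obstacle I anticipate is bookkeeping in part (b): making precise that resolving a crossing of two crossing arcs really does yield exactly one lower-crossing family of crossing arcs plus one diagram containing a returning arc, and that all the intermediate reorderings genuinely live modulo $F_{k-1}$ rather than introducing stray terms of full boundary-degree with the wrong leading behavior. This is handled by the grading/filtration machinery of Sections \ref{sec.filtration}–\ref{sec.grading}: $|\partial(\cdot)|$ is the relevant complexity, each application of \eqref{eq.arcs0}-type or \eqref{eq.order0}-type relations strictly drops $|\partial|$ on the ``error'' term, and the reordering relations of Lemma \ref{r.refl} preserve $|\partial|$ on the leading term while dropping it on the error. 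A clean induction — outer induction on the number of returning arcs for (a), then for (b) an induction on the number of double points with (a) supplying the base reduction — organizes all of this without any delicate case analysis beyond the two arc types already noted.
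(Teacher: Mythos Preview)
Your overall strategy matches the paper's, but your argument for (a) has a genuine gap. You claim an innermost returning arc $c$ bounds a disk ``meeting no other component of $D$'' and then apply \eqref{eq.arcs1}. However, $D$ is allowed to have double points (indeed the main application is to $(\al^m,s)$, which has many crossings), so other strands may cross $c$ and pass through that disk; the relation \eqref{eq.arcs1} only applies to a clean trivial cap. Reordering heights via Lemma~\ref{r.refl} does not help, since it does nothing to remove planar crossings on $c$. The paper's fix is short: if some returning arc carries a double point, resolve that double point; \emph{both} resolutions still contain a returning arc (Figure~\ref{fig:arcreturn}), so induction on the number of double points reduces to the crossing-free case, where your innermost-disk argument is valid.

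For (b) your reduction via the skein relation is correct and is exactly what the paper does, but you overcomplicate the endgame. Since $\partial D=\partial(k)$ is fixed throughout (resolving a crossing neither moves boundary points nor changes their states), a crossing-free diagram in $\fB$ with no returning arcs and that prescribed boundary is literally $\al(k)$ with the \emph{same} state $s$. There is no reshuffling to do and no appeal to Lemma~\ref{r.refl} or to \eqref{re1} is needed at this stage.
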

\begin{proof}

\FIGc{arcreturn}{A returning arc with a double point on it (middle), and two of its resolutions (left and right)}{1.4 cm}
(a) If there is no double point on a returning arc, then $D\in F_{k-1}(\cSs(\fB))$ by relation \eqref{eq.arcs1}.
Suppose there is a double point on a returning arc. Each of the two smooth resolutions of this double point contains a returning arc, see Figure \ref{fig:arcreturn}. The skein relation and induction show that $D\in F_{k-1}(\cSs(\fB))$.

\FIGc{arcreso0}{A double point of 2 non-returning arcs (middle) and its two resolutions (left and right)}{2cm}
(b) If $D$ has no double point, then $ D = \al(k,s)$. Suppose $D$ has a double point. Of the two resolutions of the double point, exactly one does not have a returning arc; see Figure ~\ref{fig:arcreso2}.
By the skein relation, part (a), and induction, we have $ D \bequal \al(k,s) \pmod { F_{k-1}}.$
\end{proof}

\def\fT{\mathfrak T}

\def\ld{\mathrm{lt}}
\begin{proposition}
\lbl{r.domain2}
Suppose $\cR$ is a domain. Then $\cSs(\fB)$ is a domain.
\end{proposition}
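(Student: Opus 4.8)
The plan is to use the standard principle that a filtered algebra whose associated graded algebra is a domain is itself a domain, applied to the filtration $\{F_m(\cSs(\fB))\}$ of Section~\ref{sec.filtration}. Recall the one-line reason: if $0\ne x,y\in\cSs(\fB)$, let $\sigma(x)$ and $\sigma(y)$ be their leading terms, i.e. the images of $x$ and $y$ in the lowest filtration quotients $\Gr_m(\cSs(\fB))$ containing them; if $\Gr(\cSs(\fB))$ is a domain then $\sigma(x)\sigma(y)\ne 0$, hence it equals the leading term of $xy$, so $xy\ne 0$. Thus it suffices to prove that $\Gr(\cSs(\fB))$ is a domain, and the filtration here is exhaustive with $F_{-1}=0$ (Proposition~\ref{r.basis2}), so leading terms are well defined.

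Next I would identify $\Gr(\cSs(\fB))$ explicitly from the proof of Theorem~\ref{r.present.fB}. That proof shows $\Gr(\cSs(\fB))$ is free with $\cR$-basis $\{(\al^m,s)\mid m\in\BN,\ s\in\Sti(m)\}$, where each $(\al^m,s)$ is a monomial in the four generators $\al(\ve,\ve')$; moreover Step~4 of that proof shows that the product in $\Gr(\cSs(\fB))$ of two such monomials satisfies $(\al^m,s)\,(\al^{m'},s')\ \bequal\ (\al^{m+m'},s'')$ for a suitable increasing state $s''\in\Sti(m+m')$, since the correction term lying in $F_{m+m'-1}$ is killed in the associated graded. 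Because $s''$ is the increasing state whose number of $+$-endpoints on the edge $a$ (resp.\ $b$) is the sum of the corresponding numbers for $s$ and $s'$, the element $(\al^{m+m'},s'')$ depends only on the additive invariant $v(\al^m,s):=(m,\,p,\,r)$, where $p$ (resp.\ $r$) is the number of $+$-states of $s$ on $a$ (resp.\ $b$). Since $v$ is a bijection from the above basis onto the submonoid $S:=\{(m,p,r)\in\BZ^3\mid 0\le p\le m,\ 0\le r\le m\}$ of $(\BZ^3,+)$, this identifies $\Gr(\cSs(\fB))$ with the twisted monoid algebra $\cR^\tau[S]$ for a $2$-cocycle $\tau\colon S\times S\to\cR^\times$ taking values in the powers of $q^{1/2}$.

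Finally, $S$ is a cancellative abelian monoid whose group completion is $\BZ^3$, so $\tau$ extends to $\BZ^3$ and we get an $\cR$-algebra embedding $\cR^\tau[S]\embed\cR^\tau[\BZ^3]$. The target is a quantum torus over $\cR$: it is $\BZ^3$-graded with each homogeneous component free of rank one, and the product of two canonical basis elements is an invertible scalar times a basis element; choosing a total group order on $\BZ^3$ and comparing top terms then shows $\cR^\tau[\BZ^3]$ is a domain whenever $\cR$ is. Hence $\Gr(\cSs(\fB))\cong\cR^\tau[S]$, being a subalgebra of a domain, is a domain, and therefore so is $\cSs(\fB)$.

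The only point that requires care is the claim used above that in $\Gr(\cSs(\fB))$ the product of two basis monomials is exactly an invertible scalar times a \emph{single} basis monomial, with no mixing among distinct basis elements of the same degree. This is guaranteed by combining the $\bequal$-computation from Step~4 of the proof of Theorem~\ref{r.present.fB} with the observation that the defining quadratic relations of $\Gr(\cSs(\fB))$ are homogeneous for the $\BZ^3$-grading by (number of components, $\bdelta_\al(a)$, $\bdelta_\al(b)$), and that every homogeneous component of this grading is at most one-dimensional by Propositions~\ref{r.basis2} and~\ref{r.basis3}. (Alternatively, one may simply recognize $\Gr(\cSs(\fB))$ as a quantization of the coordinate ring of the cone of $2\times 2$ matrices of rank $\le 1$, but the twisted monoid algebra description is the most self-contained route.)
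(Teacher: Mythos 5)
Your proof is correct and follows essentially the same route as the paper's: both reduce to showing that $\Gr(\cSs(\fB))$ has an $\cR$-basis indexed by a cancellative submonoid of $\BZ^3$ with $b_\bk\, b_{\bk'} \bequal b_{\bk+\bk'}$ (the paper via Lemma \ref{r.orderb}, you via Step~4 of Theorem~\ref{r.present.fB} plus the grading observation), and then conclude by a leading-term argument with respect to a compatible total order. The paper encapsulates this as Lemma~\ref{r.domain} on ``compatibly ordered bases,'' whereas you phrase it as an embedding of the twisted monoid algebra into a quantum torus, but the underlying argument is the same.
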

\begin{proof}  We say an $\cR$-basis $\{ b_i \mid i\in I\}$ of an $\cR$-algebra $A$ is {\em compatibly ordered}, if   $I$ is a monoid equipped with a total order such that if $i\le i'$ and $j\le j'$ then $i+j \le i'+j'$,   and
$b_i b_j \bequal b_{i+j}$.
 We first proved the following lemma.
\begin{lemma} \lbl{r.domain}
 Suppose $\cR$ is a commutative domain and an $\cR$-algebra $A$  has a compatibly ordered basis. Then $A$ is a domain.
\end{lemma}

\begin{proof} Suppose $x\in A$ is non-zero. Then $x= \sum_i x_i b_i\in A$, with $c_i\in \cR$. The leading term $\ld(x)$ is defined to be $c_jb_j$, where $j$ is the largest index such that $c_j\neq 0$.
Suppose $y\neq 0$ and $\ld(y)= c'_l b_l$.
From the assumptions  $\ld(xy)\bequal c_j c'_l b_{j+l} \neq 0$.
 \end{proof}

Return to the proof of the proposition.
Let $I\subset\BN^3$ be the set of all $\bk=(k,k_a,k_b)\in \BN^3$ such that $k_a,k_b \le k$. For $\bk\in Q$ define $b_\bk=(\al^k, s)$, where $s\in \Sti(k)$ is the only increasing state which has  $k_a$ pluses on edge $a$ and $k_b$ pluses on edge $b$. Then $\{ b_\bk \mid \bk \in I\}$ is an $\cR$-basis of $\cSs(\fB)$. Order $I$ lexicographically.
Lemma \ref{r.orderb} shows that
\be
\lbl{eq.lt}
z(\bk) z(\bk') \bequal z(\bk + \bk') \quad \text{in} \ \Gr(\cSs(\fB)).
\ee
  In other words, $\{ b_\bk \mid \bk \in Q\}$ is a compatibly ordered basis of  $ \Gr(\cSs(\fB))$. By the lemma,  $ \Gr(\cSs(\fB))$ is a domain. Hence, $\cSs(\fB)$ is a domain. \end{proof}

\subsection{Ideal triangle}
Let $\fT$ be an ideal triangle, with boundary edges $a,b,c$ and arcs $\al,\beta, \gamma$ in counterclockwise order, as in Figure \ref{fig:triangle0}.
\FIGc{triangle0}{Ideal triangle $\fT$ (left), with arcs $\al, \beta, \gamma$ (middle), and  $\al({+-})$ (right).}{2cm}

Let $\tau$ be the counterclockwise rotation by $2\pi/3$, so that $\tau(\fT)=\fT$ and $\tau$ gives the cyclic permutation $a \to b \to c \to a$ and $\al \to \beta \to \gamma \to \al$.  For $\ve,\ve'\in \{\pm\}$, let $\al({\ve,\ve'})$ be  $\al$ with  and the state $s$  given by $s(\al\cap c)=\ve, s(\al\cap b)= \ve'$. Let $\beta({\ve,\ve'}) = \tau (\al({\ve,\ve'}))$ and $\gamma({\ve, \ve'}) = \tau^2 (\al({\ve,\ve'}))$. Note that $\tau$ defines an automorphism of the algebra $\cSs(\fT)$.
\def\C{\mathrm{C}}
\def\pfT{\partial \fT}

\begin{theorem}\lbl{r.present.fT} Suppose $\fT$ is an ideal triangle, with the above notations. Then
   $\cSs(\fT)$ is  the $\cR$-algebra generated by the set of twelve generators $$X=\{ \al({\ve,\ve')},\beta({\ve,\ve'}), \gamma({\ve,\ve'})  \mid \ve, \ve'\in \{\pm \}\}$$ subject to the the following relations and their images under $\tau$ and $\tau^2$:
\begin{align}
\lbl{rel1} \beta(\mu,\ve)\, \al(\mu',\ve')& = q \al(\ve, \ve') \,  \beta(\mu,\mu') -q^2 C^{\ve}_{\mu'} \, \gamma(\ve', \mu)
\\
\lbl{rel2} \al(-,\ve)\, \al(+,\ve')& = q^2 \al(+,\ve) \,\al(-,\ve') - q^{5/2} C^\ve_{\ve'}
\\
\lbl{rel2b} \al(\ve,-)\, \al(\ve',+)& = q^2 \al(\ve,+) \,\al(\ve',-) - q^{5/2} C^\ve_{\ve'}
\\
\lbl{rel3} \al(-,\ve)\, \beta(\ve',+)& = q^2 \al(+,\ve)\, \beta(\ve',-) - q^{5/2} \gamma(\ve,\ve')
\\
\lbl{rel4} \al(\ve,-)\, \gamma(+, \ve')& = q^2 \al(\ve,+)\, \gamma(-,\ve') + q^{-1/2} \beta(\ve',\ve).
\end{align}
\end{theorem}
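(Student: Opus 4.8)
The plan is to follow the proof of Theorem~\ref{r.present.fB} closely, with the single arc family of the bigon replaced by the three families $\al,\beta,\gamma$. Let $A$ be the $\cR$-algebra with the stated presentation, filtered so that $F_m(A)$ is the span of the monomials in $X$ of degree $\le m$, and let $\omega\colon A\to\cSs(\fT)$ be the filtered $\cR$-algebra homomorphism sending each generator to itself. The first, geometric, input is that a simple $\pfT$-tangle diagram has no trivial loop (a loop in a disk bounds a disk) and no returning arc (an innermost returning arc on a disk is trivial), so up to isotopy it is a disjoint union of $k_\al$ parallel copies of $\al$, $k_\beta$ of $\beta$, and $k_\gamma$ of $\gamma$; an increasing state on it is determined by the number of $+$'s on each boundary edge, and since edges $a,b,c$ carry $k_\beta+k_\gamma$, $k_\al+k_\gamma$, $k_\al+k_\beta$ endpoints respectively, Proposition~\ref{r.basis2} gives that $\Gr_m(\cSs(\fT))$ is $\cR$-free of rank $N_m:=\sum_{k_\al+k_\beta+k_\gamma=m}(k_\beta+k_\gamma+1)(k_\al+k_\gamma+1)(k_\al+k_\beta+1)$. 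Exactly as in Lemma~\ref{r.orderb} (resolve double points by~\eqref{eq.skein}; a returning arc drops the filtration via~\eqref{eq.arcs1}), any product of stated arcs equals, modulo $F_{m-1}$ and up to a power of $q^{1/2}$, a simple stated diagram, and conversely every simple stated diagram is, modulo $F_{m-1}$ and up to a power of $q^{1/2}$, a monomial in $X$; hence $X$ generates $\cSs(\fT)$, and the set $B_m$ of ``normal'' monomials --- all $\al$'s, then all $\beta$'s, then all $\gamma$'s, with state increasing on each edge --- is an $\cR$-basis of $\Gr_m(\cSs(\fT))$, with $|B_m|=N_m$.

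Next one checks that the listed relations hold in $\cSs(\fT)$, so that $\omega$ is a well-defined, surjective homomorphism of filtered algebras. Relations~\eqref{rel2} and~\eqref{rel2b} have the form of~\eqref{re1}--\eqref{re2} and are proved by applying~\eqref{eq.order} and then removing the kink with~\eqref{eq.kink}. Relations~\eqref{rel1}, \eqref{rel3}, \eqref{rel4} follow by stacking the two arcs involved, resolving the single crossing by the skein relation~\eqref{eq.skein}, and evaluating the resulting returning arc with Lemma~\ref{r.arcs} (formulas~\eqref{eq.arcs1}, \eqref{eq.arcs2}) and~\eqref{eq.kink}; this produces the coefficients $q,q^2,q^{5/2},q^{-1/2}$ and the symbols $C^\ve_{\ve'}$. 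Their $\tau$- and $\tau^2$-images hold because $\tau$ is an algebra automorphism of $\cSs(\fT)$ taking the defining pictures to their rotations.

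It remains to bound $\Gr_m(A)$ from above. In $\Gr(A)$ each relation becomes homogeneous: the $C$-term of~\eqref{rel2},\eqref{rel2b} and the one-arc term of~\eqref{rel1},\eqref{rel3},\eqref{rel4} have strictly smaller degree and vanish. Thus in $\Gr(A)$ we have $\beta(\cdot)\al(\cdot)\bequal\al(\cdot)\beta(\cdot)$ from~\eqref{rel1}, hence (applying $\tau,\tau^2$) also $\gamma(\cdot)\beta(\cdot)\bequal\beta(\cdot)\gamma(\cdot)$ and $\al(\cdot)\gamma(\cdot)\bequal\gamma(\cdot)\al(\cdot)$; the reduced forms of~\eqref{rel2},\eqref{rel2b} (and their $\tau$-images) sort the states inside the $\al$-, $\beta$-, $\gamma$-blocks into increasing order; and the reduced forms of~\eqref{rel3},\eqref{rel4} (and their $\tau$-images) transfer the state $+$ between the two families of endpoints lying on a common edge. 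Applying these in turn --- first move all $\al$'s to the left, then all $\beta$'s, then sort inside blocks, then transfer states across edges --- shows that every degree-$m$ monomial is congruent in $\Gr_m(A)$, up to a power of $q^{1/2}$, to an element of $B_m$, so $B_m$ spans $\Gr_m(A)$. Since $\Gr(\omega)$ carries each element of $B_m\subset\Gr_m(A)$, up to a power of $q^{1/2}$, to a distinct element of the basis $B_m$ of $\Gr_m(\cSs(\fT))$, and $|B_m|=N_m$, it maps a spanning set bijectively onto a basis; hence $\Gr(\omega)$ is an isomorphism in every degree, so $\omega$ is an isomorphism, proving the theorem.

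The main obstacle is the last step: one must verify that~\eqref{rel1}--\eqref{rel4} together with their $\tau$-images really do reduce every monomial to normal form, and, crucially, that the reduction is \emph{exact} --- it neither forces a further identification among elements of $B_m$ (which would make the presentation inconsistent) nor leaves a non-normal monomial irreducible (which would make it too large). Concretely one must see that the transfer relations~\eqref{rel3},\eqref{rel4} eliminate exactly the pairs of states on a shared edge that violate the increasing condition, and no others; the equality $|B_m|=N_m$ established in the first paragraph is precisely the numerical check that makes this work. A secondary, purely computational, nuisance is the bookkeeping of the powers of $q$ and of $C^\ve_{\ve'}$ in deriving~\eqref{rel1}, \eqref{rel3}, \eqref{rel4} from~\eqref{eq.skein} and Lemma~\ref{r.arcs}.
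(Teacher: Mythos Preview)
Your proposal is correct and follows the same approach as the paper's proof: identify the basis of $\Gr_m(\cSs(\fT))$ via Proposition~\ref{r.basis2} and an analog of Lemma~\ref{r.orderb} (the paper's Lemma~\ref{r.tri3}), verify the relations geometrically, then reduce monomials in $\Gr(A)$ to normal form and conclude that $\Gr(\omega)$ is an isomorphism. The obstacle you flag --- that the cross-edge state transfer on edge $b$ involves $\al$ and $\gamma$, which are not adjacent in $\al^{k_1}\beta^{k_2}\gamma^{k_3}$ --- is precisely the paper's Case~2(ii), handled by commuting the last $\al$ past the $\beta$'s via the reduced~\eqref{rel1}, applying~\eqref{rel4}, and commuting back.
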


\begin{proof} First we show that $X$ generates $\cSs(\fT)$ and all the relations \eqref{rel1}--\eqref{rel4} are satisfied. Then an upper bound estimate argument will finish the proof.
\FIGc{product}{Diagrams of $\al^2$, $\beta \gamma$, and $\gamma\beta$}{2cm}
Throughout the proof, the order of each $\pfT$-diagram is positive. For example, the diagrams of $\al^2$, $\beta \gamma$, and $\gamma\beta$ are depicted in Figure \ref{fig:product}.

 {\bf Step 1.} Let us show that $X$ generates $\cSs(\fT)$.
 For $\bk=(k_1,k_2,k_3)\in \BN^3$ let $|\bk|:=k_1 + k_2 + k_3$. Let $\theta(\bk)$ be the simple $\pfT$-tangle  diagram which consists of $k_1$ parallels of $\al$, $k_2$ parallels of $\beta$, and $k_3$ parallels of $\gamma$, and $\theta^\bk=\al^{k_1} \beta^{k_2} \gamma^{k_3}$, see Figure \ref{fig:basis1}.
 \FIGc{basis1}{Diagram $\theta(2,3,1)$ (left) and diagram $\al^2 \beta^3 \gamma$ (right)}{2.5cm}

 The set $\partial(\bk):=\partial(\theta(\bk))=\partial(\theta^\bk)$, considered up to isotopy of $\fT$, consists of $k_2+k_3$ points on $a$, $k_1+k_3$ points on $b$, and $k_1+k_2$ points on $c$. Let $\St(\bk)$ be the set of all states  $s: \partial(\bk) \to \{\pm\}$, and
$\Stink\subset \St(\bk)$ be the subset of all increasing states. For $s\in \St(\bk)$ let $\theta(\bk,s)$ be $\theta(\bk)$ with state $s$. Similarly, $(\theta^\bk, s)$ is $\theta^\bk$ with state $s$.
By Proposition~\ref{r.basis2},
$$B_m:=\{ \theta(\bk,s) \mid \bk\in\BN^3, |\bk|=m,  s \in \Stink \}$$
is an $\cR$-basis of $\Gr_m(\cSs(\fT))$.
By Lemma \ref{r.tri3},
$$ \theta(\bk,s) \bequal (\theta^\bk,s) \pmod {F_{|\bk|-1}(\cSs(\fT))},$$
 which implies that $\{ (\theta^\bk,s) \mid \bk \in \BN^3, s \in \Stink \}$ is also an $\cR$-basis of $\cSs(\fT)$. Since each $(\theta^\bk,s)$ is a monomial in $X$,  $X$ generates $\cSs(\fT)$.

 \FIGc{proofrel1}{Proof of Identity \eqref{eq.rel1a}}{2.5cm}
Let now prove all the relations \eqref{rel1}--\eqref{rel4} are satisfied.
Consider \eqref{rel1}. Using the skein relation as in Figure \ref{fig:proofrel1}, we have
\be \lbl{eq.rel1a}
\al(\ve, \ve') \beta(\mu, \mu')= q C^\ve_{\mu'} \gamma(\ve', \mu) + q^{-1} \beta(\mu,\ve) \al (\mu', \ve'),
\ee
which proves \eqref{rel1}.

\FIGc{proofrel2}{Proof of Identity \eqref{eq.rel2a}}{2.5cm}
Now prove \eqref{rel2}. Using Relation \eqref{eq.order} as in Figure \ref{fig:proofrel2} and then \eqref{eq.kink}, we have
\be \lbl{eq.rel2a}
 \al(-,\ve)\, \al(+,\ve') = q^2 \al(+,\ve) \,\al(-,\ve') +  q^{1/2} (-q^3)  C^\ve_{\ve'},
\ee
which proves~\eqref{rel2}. The proof of \eqref{rel2b}--\eqref{rel4} are similar.

\def\barvt{\bar{\vt}}
{\bf Step 2.} Let $A$ be the $\cR$-algebra generated by $X$ subject to the relations \eqref{rel1}--\eqref{rel4}. Then $A$ is a filtered $\cR$-algebra where the $m$-th filtration $F_m(A)$ is spanned by the set of monomials in $X$ of degree $\le m$.
The $\cR$-algebra map $\omega: A \to \cSs(\fB)$, defined by $\omega(x)=x$ for all $x \in X$, is a surjective homomorphism of filtered algebras, and induces an algebra homomorphism
$$ \Gr(\omega): \Gr(A) \to \Gr(\cSs(\fB)).$$
The set $M_m:=\{ \vt_1 \dots \vt_m \mid \vt_j \in X\}$ spans $\Gr_m(A)$. If $\al_i\in \{\al,\beta,\gamma\}$ is  $\vt_i$ without state, then  by presenting each $\vt_j$ as a stated arc on $\fB$, there is a state $s:\partial(\al_1\dots\al_m) \to \{\pm\}$ such that $\vt_1\dots \vt_m= (\al_1 \dots \al_m,s)$. Thus, we can identity
$$M_m=\{ (\al_1\dots \al_m,s) \mid \al_i\in \{\al,\beta,\gamma\},  s:\partial(\al_1\dots\al_m) \to \{\pm\}\}.$$

\def\vM{\vec M}

{\bf Step 3.} Let us now show that the subset $\vM_m\subset M_m$, defined by
$$\vM_m:=\{(\theta^\bk , s) \mid |\bk|=m, s \in \St(\bk)\},$$
 spans $\Gr_m(A)$.
Ignoring the second term of the right hand side of Relation \eqref{rel1} which is of less degree, we get that for any state $r$ of $\al\beta$,
\be
\lbl{eq.ex}
(\beta \al,r)=q (\al \beta,r) \quad \text{in $\Gr(A)$}.
\ee
Together with its images under $\tau$ and $\tau^2$, \eqref{eq.ex} shows that for any permutation $\sigma$ of $\{1,\dots,m\}$,
\be
\lbl{eq.ex3}
(\al_1\dots \al_m,s) \bequal (\al_{\sigma(1)} \dots \al_{\sigma(m)},s) \quad \text{in $\Gr(A)$}.
\ee
In particular, if the numbers of $\al, \beta, \gamma$ among $\al_1,\dots,\al_m$ are components of  $\bk=(k_1, k_2, k_3)$, then $(\al_1\dots \al_m,s) \bequal (\theta^\bk,s)$. This shows the subset $\vM_m$ also spans $\Gr_m(A)$.

\def\vMu{\vec M^\uparrow}
{\bf Step 4.}
Let us show that the subset $\vMu_m\subset \vM_m$, defined by
$$\vMu_m:=\{(\theta^\bk , s) \mid |\bk|=m, s \in \Stink\},$$
 spans $\Gr_m(A)$. First we make the following observation.  Suppose $\vt_1\vt_2$ is the left hand side of one of \eqref{rel2}--\eqref{rel4}, and $\barvt_i\in \{\al,\beta,\gamma\}$ is $\vt_i$ without states. There is a state $r\in \St(\barvt_1 \barvt_2)$ such that
$\vt_1 \vt_2=(\barvt_1 \barvt_2,r)$, and  \eqref{rel2}--\eqref{rel4}, ignoring the second term of the right hand side, give
\be
\lbl{eq.ex1}
(\barvt_1 \barvt_2,r) = q^2 (\barvt_1 \barvt_2,r') \quad \text{in $\Gr(A)$},
\ee
where $r'$ is a simple positive permutation of $r$.

%the right hand side has two terms, the first has $\inv$ less than that of the left hand side and the second has degree less than that of the left hand side.

Now assume $\vt_1\dots \vt_m = (\theta^\bk,s)\in \vM_m$  with $\inv(s) >0$. Then there is a consecutive $s$-decreasing pair $(u,v)\in \partial(\bk)^2 $. Let $u$ be an end point of $\vt_i$ and $v$ be an end point of $\vt_j$.
There are two cases:  $\barvt_i=\barvt_j$ and   $\barvt_i \neq\barvt_j$.

 {\em Case 1:} $\barvt_i=\barvt_j$. Say,  $\barvt_i=\barvt_j=\al$. Then one has $j=i+1$ because $u$ and $v$ are consecutive. Besides, $u,v\in b$ or $u,v \in c$.
 \FIGc{order1}{Case 1: we have $\al(-,\ve) \al(+, \ve')$ on the left and $\al(\ve,-) \al(\ve',+)$ on the right}{2.2cm}

 If $u,v \in b$, then  $\vartheta_1 \vartheta_2 =
  \al(-,\ve) \al(+, \ve')$, the left hand side of \eqref{rel2} (see Figure \ref{fig:order1});
   and if  $x_1, x_2 \in c$, then $\vartheta_1 \vartheta_2 =
  \al(\ve,-) \al(\ve',+)$, the left hand side of \eqref{rel2b}.  Using \eqref{eq.ex1}, we get
  \be
  \lbl{eq.res}
   (\theta^\bk,s) \bequal (\theta^\bk,s') \quad \text{in $\Gr(A)$ for some $s'$ with $\inv(s') < \inv(s)$}.
   \ee

{\em Case 2:}  $\barvt_i\neq\barvt_j$. There are 3 subcases: (i) $u,v\in c$, (ii) $u,v \in b$, and (iii): $u,v \in a$. See Figure \ref{fig:order2}.
\FIGc{order2}{Case 2: From left to right we have $\al(-, \ve) \beta(\ve', +)$,
 $\al(\ve,-) \gamma(+, \ve')$, and
 $\beta(-\,\ve) \gamma(\ve',+)$}{2.7cm}

{\em Subcase 2(i):} $u,v\in c$. In this case one has $j=i+1$, and $\vt_i \vt_{i+1}= \al(-, \ve) \beta(\ve', +)$, which is the left hand side of \eqref{rel2} (see Figure \ref{fig:order2}). Again, using \eqref{eq.ex1}, we get \eqref{eq.res}.

Subcase 2(iii)  is similar. Actually  applying the rotation $\tau$, one gets subcase 2(iii) from~2(i).

{\em Subcase 2(ii):} $u,v\in b$. Then $\vt_i \vt_j = \al(\ve,-) \gamma(+, \ve')$, the left hand side of \eqref{rel2b}, see Figure~\ref{fig:order2}. But we might not have $j=i+1$. We only have $i=k_1$ and $j=k_1+k_2+1$, so that $\barvt_i$ is the last $\al$ and $\barvt_j$ is the first $\gamma$ in the product $\theta^\bk= \al^{k_1} \beta^{k_2}\gamma^{k_3}$. However, we can bring $\barvt_i$ next to $\barvt_j$ using \eqref{eq.ex3}: In $\Gr(A)$ we have
$$ (\theta^\bk,s)= (\al^{k_1} \beta^{k_2}\gamma^{k_3},s)\bequal  (\al^{k_1-1} \beta^{k_2} (\al \gamma)\gamma^{k_3-1},s)\bequal (\al^{k_1-1} \beta^{k_2} (\al \gamma)\gamma^{k_3-1},s') \bequal (\theta^\bk,s'),$$
for some state $s'$ with $\inv(s') < \inv(s)$.
Here the second and the last identities are \eqref{eq.ex}, and the third identity follows from \eqref{eq.ex1}.

Thus, in all cases we always have \eqref{eq.res}. An induction shows that for all $(\theta^\bk,s) \in \vM_m$,
\be
(\theta^\bk,s) \bequal (\theta^\bk,\sincr)\quad  \text{in $\Gr(A)$}.
\ee
This shows $\vMu_m$ also spans $\Gr_m(A)$.

% Induction on $\inv(\al(\bk,s))$ shows that $B''_m$ spans $G_m/G{m-1}$.

{\bf Step 5.} Since $\Gr(\omega)(\vMu_m)= B_m$, which is  an $\cR$-basis of $\Gr_m(\cSs(\fT))$, $\vMu_m$ is $\cR$-linearly independent. Thus,  $\vMu_m$ is an $\cR$-basis of $\Gr_m(A)$, and  $\Gr(\omega): \Gr_m(A) \to \Gr_m(\cSs(\fT))$ is an isomorphism. It follows that $\omega: A \to \cSs(\fB)$ is an isomorphism.
\end{proof}
The following lemma is used in the proof of Theorem \ref{r.present.fT}, and we use notations there.
\begin{lemma} \lbl{r.tri3}
Suppose $D$ is a stated $\pfT$-tangle  diagram with $\partial D = \partial(\bk)$ and each component of $D$ is an arc. Let $s\in \St(\bk)$ be the state of $D$.

(a) If $D$ contains a returning arc, i.e. an arc whose two ends are in one edge of\  $\ \fT$, then, as an element in $\cSs(\fT)$,  $D\in F_{|\bk|-1}$.

(b) If  $D$ has no returning ars, then, as elements in $\cSs(\fT)$,
$$ D = \theta(\bk,s) \pmod { F_{|\bk|-1}}.$$
\end{lemma}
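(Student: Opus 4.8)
The plan is to carry the proof of Lemma~\ref{r.orderb} (the ideal bigon case) over almost verbatim; the one structural difference is that $\fT$ has three boundary edges, so a non-returning arc is isotopic to one of $\al,\beta,\gamma$ — joining $\{b,c\}$, $\{a,c\}$, $\{a,b\}$ respectively — and the bookkeeping of which Kauffman resolution of a double point produces a returning arc must be run over all three pairs of edges rather than over the single pair available in a bigon. I would prove both parts simultaneously by induction on the number $c(D)$ of double points of $D$, the tools being the trivial returning arc relation~\eqref{eq.arcs1}, the skein relation~\eqref{eq.skein} (with the trivial loop relation~\eqref{eq.loop} to discard a contractible circle whenever one is created), and the induction hypothesis.

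For part~(a), assume $D$ has a returning arc. If $c(D)=0$, I would pick a returning arc $\delta$ cutting off an innermost disk $d\subset\fT$: any component of $D$ meeting the interior of $d$ has both endpoints on the sub-interval $d\cap\pfT$ of a single edge, hence is a returning arc lying strictly inside $d$, contradicting minimality, so $d\cap D=\delta$, and~\eqref{eq.arcs1} rewrites $D$ as a scalar times a diagram with two fewer boundary points, which lies in $F_{|\bk|-1}$. If $c(D)\ge 1$ I would instead locate a double point $x$ on a returning arc $\delta$ — first passing to an innermost returning arc and, inside the disk it cuts off, to an innermost chord, as in Figure~\ref{fig:arcreturn} — chosen so that resolving $x$ leaves a returning arc in both resolvents: when $x$ is a self-crossing of $\delta$ one resolvent is a shorter returning arc plus a contractible loop and the other a shorter returning arc, and when $x$ is a crossing of $\delta$ with an arc having an endpoint on the same edge as $\delta$, both resolvents again carry a returning arc. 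Since each resolvent has strictly fewer double points, the induction hypothesis gives $D\in F_{|\bk|-1}$.

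For part~(b), assume $D$ has no returning arc. If $c(D)=0$ then $D$ is a disjoint union of embedded non-returning arcs; each is isotopic to one of $\al,\beta,\gamma$, and disjointness forces the endpoints on each edge into the order occurring in $\theta(\bk)$, so $D$ is isotopic to $\theta(\bk,s)$, where $\bk$ records the number of arcs of each type and $s$ is the transported state — hence $D=\theta(\bk,s)$ in $\cSs(\fT)$. If $c(D)\ge 1$ I would pick any double point $x$: if it is a self-crossing of a non-returning arc, both resolvents are again free of returning arcs with fewer double points, and the induction hypothesis (with~\eqref{eq.loop} absorbing a contractible circle if one appears) applies; if it is a crossing of two distinct non-returning arcs, a short case check over the pairs $\{b,c\},\{a,c\},\{a,b\}$ shows that exactly one of the two resolvents is free of returning arcs while the other acquires one, so by part~(a) the second term lies in $F_{|\bk|-1}$ and the induction hypothesis applied to the first gives $D\bequal\theta(\bk,s)\pmod{F_{|\bk|-1}}$ (an honest equality when $c(D)=0$, matching the statement).

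The step I expect to need genuine care, and the only real obstacle, is the case analysis just invoked in part~(b): for two non-returning arcs meeting at a double point one must enumerate the two ways of reconnecting the four local strands and verify that precisely one creates a returning arc — a single picture in the bigon, but in the triangle requiring one to run through the three edge-pairs; the companion point in part~(a), that a double point on an innermost chord of an innermost returning-arc disk can always be resolved without destroying all returning arcs, is of the same elementary but fiddly nature. Everything else — the induction on $c(D)$, the appeals to~\eqref{eq.arcs1},~\eqref{eq.skein} and~\eqref{eq.loop}, and the identification of a crossingless non-returning family with $\theta(\bk)$, which follows from the triviality of the arc complex of an ideal triangle — is routine.
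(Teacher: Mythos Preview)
Your proposal follows essentially the same route as the paper: induction on the number of double points, the crossingless base case recognized as $\theta(\bk,s)$, and the inductive step handled by resolving a crossing, discarding via part~(a) any resolvent that acquires a returning arc, and applying induction to the other. The paper is terser --- it simply asserts (with a figure) that of the two resolutions ``exactly one does not have a returning arc'' --- whereas you spell out the edge-pair case check; this is the same argument with more detail.

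One caution on the extra self-crossing subcase you single out: you say both resolvents are returning-arc-free and invoke induction on each, but the skein relation then gives $D \equiv q\cdot q^{a}\,\theta(\bk,s) + q^{-1}\cdot q^{b}\,\theta(\bk,s)$ modulo $F_{|\bk|-1}$, and a sum of two $\bequal$-terms is not in general $\bequal\theta(\bk,s)$ --- indeed a single kinked arc equals $-q^{\pm 3}$ times the unkinked one by~\eqref{eq.kink}, so the sign obstructs a literal $\bequal$. The paper's two-type figure covers only crossings of distinct non-returning arcs (same edge-pair versus different edge-pairs), and in fact the only diagrams $D$ to which the lemma is ever applied (the products $\theta^{\bk}$ in Step~1 of Theorem~\ref{r.present.fT} and in Proposition~\ref{r.domain3}) have no self-crossings. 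If you want the lemma for arbitrary $D$, weaken $\bequal$ to ``equal up to a unit $\pm q^{j/2}$''; otherwise simply note that the relevant $D$ are built from simple copies of $\al,\beta,\gamma$ and have no self-crossings, and drop that subcase.
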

\begin{proof} (a) The proof of Lemma \ref{r.orderb}(a) works also for this case.

(b) \FIGc{arcreso2}{There are two types of double points (up to isotopies and rotations); in each case the only resolution without returning arc is drawn. Other resolutions have returning arcs}{2cm}
If $D$ has no double point, then $ D = \theta(\bk,s)$. Suppose $D$ has a double point. Of the two resolutions of the double point, exactly one does not have a returning arc; see Figure ~\ref{fig:arcreso2}.
By the skein relation, part (a), and induction, we have $ D = \theta(\bk,s) \pmod { F_{|\bk|-1}}.$
\end{proof}
%\subsection{Representation}

\begin{proposition} \lbl{r.domain3}
Suppose $\cR$ is a domain. Then $\cSs(\fT)$ is a domain. More over, if $\tF$ is a collection of ideal triangles, then $\bigotimes _{\fT\in \tF} \cSs(\fT)$ is a domain.
\end{proposition}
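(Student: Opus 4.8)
The plan is to run the argument of Proposition~\ref{r.domain2} with the ideal bigon $\fB$ replaced by the ideal triangle $\fT$ and Lemma~\ref{r.orderb} replaced by Lemma~\ref{r.tri3}: I would exhibit a \emph{compatibly ordered} $\cR$-basis of $\Gr(\cSs(\fT))$, invoke Lemma~\ref{r.domain} to conclude that $\Gr(\cSs(\fT))$ is a domain, and then deduce that $\cSs(\fT)$ itself is a domain, using that its multiplicative filtration $\{F_m(\cSs(\fT))\}$ is exhaustive with $F_{-1}=0$. The statement about $\bigotimes_{\fT\in\tF}\cSs(\fT)$ would then follow by passing to associated graded algebras.

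First I would fix the indexing monoid. By the proof of Theorem~\ref{r.present.fT} (combining Lemma~\ref{r.tri3} with Proposition~\ref{r.basis2}), the set of all $(\theta^\bk,s)$ with $\bk\in\BN^3$ and $s$ an increasing state is an $\cR$-basis of $\cSs(\fT)$, and the subset with $|\bk|=m$ is an $\cR$-basis of $\Gr_m(\cSs(\fT))$. An increasing state $s$ on $\theta^\bk$ is determined by the triple $\bl=(l_a,l_b,l_c)$ of numbers of $+$'s of $s$ on the edges $a,b,c$, and exists precisely when $l_a\le k_2+k_3$, $l_b\le k_3+k_1$, $l_c\le k_1+k_2$; hence this basis is indexed by
\[
I=\bigl\{(\bk,\bl)\in\BN^3\times\BN^3 \;\big|\; l_a\le k_2+k_3,\ l_b\le k_3+k_1,\ l_c\le k_1+k_2\bigr\}\subset\BN^6 ,
\]
which is a submonoid of $\BN^6$ because these three inequalities survive addition. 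I would order $I$ lexicographically and write $b_{(\bk,\bl)}$ for the corresponding basis element.

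The heart of the matter is to verify that $\{b_{(\bk,\bl)}\mid(\bk,\bl)\in I\}$ is compatibly ordered, i.e.\ $b_{(\bk,\bl)}\,b_{(\bk',\bl')}\bequal b_{(\bk+\bk',\bl+\bl')}$ in $\Gr(\cSs(\fT))$. Stacking the two standard stated arc diagrams produces a product of $|\bk|+|\bk'|$ arcs whose state has $\bl+\bl'$ many $+$'s on the edges $a,b,c$; in $\Gr(\cSs(\fT))$ the reordering identity \eqref{eq.ex3} brings this product into standard form $\theta^{\bk+\bk'}$, and the height-exchange moves of Lemma~\ref{r.refl} (equivalently, \eqref{eq.ex1} and its rotations, as in Step~4 of the proof of Theorem~\ref{r.present.fT}) bring the state to its increasing representative. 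Since every such move multiplies the element by a power of $q^{1/2}$ and only permutes boundary points within a single edge, it preserves $\bl+\bl'$; hence the product equals $q^{j/2}\,b_{(\bk+\bk',\bl+\bl')}$ for some $j\in\BZ$. By Lemma~\ref{r.domain} the algebra $\Gr(\cSs(\fT))$ is then a domain, and the standard symbol-map argument (if $x\in F_m\setminus F_{m-1}$ and $y\in F_n\setminus F_{n-1}$, the image of $xy$ in $F_{m+n}/F_{m+n-1}$ is the nonzero product of the images of $x,y$) upgrades this to: $\cSs(\fT)$ is a domain. This last bookkeeping (that only $q^{1/2}$-powers appear in the associated graded and that the edge-wise $+$-counts are not disturbed) is where the content of the proposition sits, and is the step I expect to be the main obstacle.

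For the final assertion I take $\tF$ finite (the only case needed; an infinite $\tF$ reduces to it, since then $\bigotimes_{\fT\in\tF}\cSs(\fT)$ is the directed union of its finite sub-tensor-products along the split-injective maps $x\mapsto x\otimes 1$). Filter $\bigotimes_{\fT\in\tF}\cSs(\fT)$ by total degree. Since each $\cSs(\fT)$ is $\cR$-free with a basis adapted to its filtration, the canonical map gives an isomorphism $\Gr\bigl(\bigotimes_{\fT\in\tF}\cSs(\fT)\bigr)\cong\bigotimes_{\fT\in\tF}\Gr(\cSs(\fT))$. The right-hand side has the $\cR$-basis of tensor products $\bigotimes_{\fT}b_{(\bk_\fT,\bl_\fT)}$, indexed by the submonoid $\prod_{\fT\in\tF}I\subset\BN^{6|\tF|}$; ordering this monoid lexicographically and using that multiplication in a tensor product of $\cR$-algebras is factorwise, the previous paragraph shows the basis is again compatibly ordered. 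Hence $\bigotimes_{\fT\in\tF}\Gr(\cSs(\fT))$ is a domain by Lemma~\ref{r.domain}, so $\Gr\bigl(\bigotimes_{\fT\in\tF}\cSs(\fT)\bigr)$ is a domain, and therefore $\bigotimes_{\fT\in\tF}\cSs(\fT)$ is a domain.
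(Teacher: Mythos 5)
Your proposal is correct and follows essentially the same route as the paper: the same indexing submonoid $I\subset\BN^6$, the same lexicographically ordered basis $\{b_{(\bk,\bl)}\}$ of $\Gr(\cSs(\fT))$, and the same appeal to Lemma~\ref{r.domain} followed by the standard symbol-map argument, with the tensor-product case handled by passing to associated graded. The only difference is expository: you spell out more explicitly than the paper does why normalizing a stacked product via \eqref{eq.ex3} and the height-exchange relations contributes only a power of $q^{1/2}$, whereas the paper compresses this into a citation of Lemma~\ref{r.tri3}.
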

\begin{proof}For $\bk=(k_1,k_2,k_3,k_a,k_b,k_c) \in I$, where
$$ I:=\{ (k_1,k_2,k_3,k_a,k_b,k_c) \in \BN^6 \mid k_a \le k_2 + k_3, k_b \le k_1+ k_3, k_c \le k_1 + k_2 \},$$
 define $b_\bk=(\al^{k_1}\beta^{k_2}\gamma^{k_3}, s)$, where $s\in \Sti(k)$ is the only increasing state such that there are $k_a$ pluses on edge $a$, $k_b$ pluses on edge $b$, and $k_c$ pluses on edge $c$. Then $\{ b_\bk \mid \bk \in I\}$ is an $\cR$-basis of $\Gr(\cSs(\fT))$. We order $I$ using the lexicographic order.
 Lemma \ref{r.tri3} shows that
\be
\lbl{eq.lt3}
z(\bk) z(\bk') \bequal z(\bk + \bk') \quad \text{in} \ \Gr(\cSs(\fT)).
\ee
Lemma \ref{r.domain} shows that    $\Gr(\cSs(\fT)) $ is a domain. Hence $\cSs(\fT)$ is a domain.

By combining the above basis of $\cSs(\fT)$, with the lexicographic order, we get a compatibly ordered basis of  $\bigotimes _{\fT\in \tF} \cSs(\fT)$. Hence $\bigotimes _{\fT\in \tF} \cSs(\fT)$ is a domain.
\end{proof}

\subsection{Zero-divisor. Proof of Theorem \ref{thm.zero}} \lbl{sec.zero}
\begin{proof} If $\pfS$ is a closed manifold, then $\cSs(\fS)=\ooS(\fS)$, and the result is well-known and was proved in \cite{PS2}. Assume  $\pfS\neq \emptyset$. There are only a few simple cases when $\fS$ is not triangulable, listed in (i)--(iv) below. In each case, $\fS=\bfS \setminus \cP$.

(i) $\bfS=S^2$, $|\cP|=1$. Then $\cSs(\fS)= \ooS(\fS)= \cR$, which is domain.

(ii) $\bfS=S^2$, $|\cP|=2$. Then $\cSs(\fS)= \ooS(\fS)= \cR[x]$, which is a domain.
Here $x$ is is only non-trivial simple loop in $\fS$.

(iii) $\bfS $ is a disk and $|\cP|=1 $. Then  $\cSs(\fS)=  \cR$, a domain.

(iv)  $\fS$ is an ideal bigon. Then $\cSs(\fS)$ is a domain by Proposition \ref{r.domain2}.

  Now suppose $\fS$ has an ideal triangulation.  By the triangular decomposition, $\cSs(\fS)$ embeds into
$\bigotimes _{\fT\in \tF} \cSs(\fT)$, which is a domain by  Proposition \ref{r.domain3}. It follows that $\cSs(\fS)$ is a domain.
\end{proof}
\no{
We obtained the following well-known result, which had been proved in \cite{PS2,BW1,CM}
\begin{corollary}
Suppose the ground ring $\cR$ is a domain, and $\fS$ is a punctured bordered surface.
%Assume $\fS$ is a bordered surface having a triangulation.
Then $\ooS(\fS)$ is a domain.
\end{corollary}
}
\begin{remark} Besides the case when $\pfS$ is a closed manifold, when $\pfS=\emptyset$, we also have $\cSs(\fS)= \ooS(\fS)$, and the fact that $\ooS(\fS)$ is a domain was known before, see \cite{PS2,BW1,CM}. Our proof in these special cases is different from those in \cite{PS2,BW1,CM}.
Later we show that the Muller skein algebra embeds into $\cSs(\fS)$, hence it is also a domain, a fact proven by Muller before using another method \cite{Muller}.
\end{remark}

\begin{remark}
 Suppose $\cR=\BC$, $q=-1$, and $\fS$ is a triangulated punctured bordered surfaced with a triangulation $\D$.
Then $\cSs(\fS)$ is canonically isomorphic to the ring of regular functions
on the $SL_2(\BC)$-character variety of $\pi_1(\fS)$. The triangular decomosition shows there is a natural embedding
of $\cSs(\fS)$ into $\bigotimes_{\fT\in \tF} \cSs(\fT)$, which is non-commutative. Even in this case of $q=-1$,   the ring $\cSs(\fT)$ and
the triangular decomposition seem new.
\end{remark}

\no{
\subsection{The case $q^2=1$}
 While it is almost obvious from the definition that the ordinary skein algebra $\ooS(\fS)$ is commutative when $q=\pm 1$, it is not quite obvious, because of the boundary relations, that $\cSs(\fS)$ is commutative at $q=\pm 1$.

In what follows we use the notation for $q$-commutator
$$ [x,y]_u := u xy - u^{-1} yx.$$
\begin{proposition}\lbl{r.present.fT1a}
  In $\cSs(\fT)$ we have the following relations
\begin{align*}
\lbl{rel1}[\al  _{++}, \al _{+-}]_q &= [\al _{++}, \al _{-+}]_q=  [\al _{+-}, \al _{--}]_q= [\al _{-+}, \al _{--}]_q= 0 \\
[\al _{+-}, \al _{-+}]_1&= 0\\
[\al _{++},  \al _{--}]_{q^2} &= q^2- q^{-2}\\
[\al _{+ \, \ve}, \beta _{\ve' +}]_{q^{1/2}}&=[\al _{-\, \ve}, \beta _{\ve' -}]_{q^{1/2}} =[\al _{-\, \ve}, \beta _{\ve'  +}]_{q^{1/2}} =0 \quad \text{for all} \ \ve, \ve' \in \{\pm\}\\
[ \al _{+\, \ve} \, \beta _{\ve'-}]_{q^3/2} &= (q^{2} - q^{-2}) \gamma_{\ve\ve'} \quad \text{for all} \ \ve, \ve' \in \{\pm\}
\end{align*}
and all their images under the cyclic permutation $\tau$.
\end{proposition}

 %Suppose $A$ is a $\BZ$-algebra. For $\ve= \pm 1$ let $A_\ve$ be the $\Zq$-algebra
\begin{corollary}
When $q=\pm1$, the algebra
\end{corollary}
}

\def\tY{\tilde{\cY}}
\def\id{\mathrm{id}}

\section{Chekhov-Fock algebra and quantum trace} \lbl{sec.QT}

%Recall that $ \cR=\BZ[q^{\pm 1/2}].$

\def\btau{{\tilde \tau}}
\def\tfT{{\tilde \fT}}
\def\tF{{\tilde \cF}}
\subsection{Chekhov-Fock triangle algebra, Weyl normalization}

Suppose $\fT $ is an ideal triangle with boundary edges $a,b,c$ and arcs $\al, \beta, \gamma$  as in Figure~\ref{fig:triangle0}.
Define $\cY(\fT  )$ to be the $\cR$-algebra
$$ \cY(\fT  ) = \cR\la y_a^{\pm 1},  y_b^{\pm 1},  y_c^{\pm 1} \ra / (y_a y_b = q \,y_b y_a, y_b y_c = q\, y_c y_b, y_c y_a = q \, y_c y_a).$$
Then $\cY(\fT)$ belongs to a type of algebras called {\em quantum tori}, see eg. \cite[Section 2]{Le:QT}.

Suppose $x,y$ are elements of an $\cR$-algebra such that $x y= q^k yx$, where $k\in \BZ$. Define the Weyl normalization of $xy$ by
$$ [xy]:= q^{-k/2} xy = q^{k/2} yx.$$
The advantage is that $[xy]=[yx]$. For example, for $b,c\in \cY(\fT)$ and $\ve, \ve' \in \{ \pm 1\}$, we have
\be \lbl{eq.weyl}
[(y_c)^{\ve} (y_b)^{\ve'}]= q^{\ve  \ve'/2 } (y_c)^{\ve } (y_b)^{\ve' }.
\ee

%\subsection{Quantum trace map for ideal triangle}
\lbl{sec.tri1}
%Recall that $\fT$ is the ideal triangle with edges $a,b,c$ in counterclockwise order. We assume that the arcs $\al,\beta, \gamma \in \cSs(\fT)$ are so that ...
  The rotation $\tau: \fT \to \fT$, which gives the cyclic permutations $\al \to \beta\to \gamma\to \al$ and $ a\to b \to c \to a$, induces algebra automorphisms of the
 algebras $\cSs(\fT)$ and $\cY(\fT)$.
\begin{proposition}
There exists a unique $\cR$-algebra homomorphism $\phi: \cSs(\fT) \to\cY(\fT)$ which is $\tau$-equivariant and
satisfies
\begin{align}
\lbl{eq.qt}
\phi(\al(\ve \ve')) = \begin{cases}
0 \quad & \text{if } \ \ve=-, \ve'=+ \\
[c^{\ve} b^{\ve'}] &\text{otherwise}.
\end{cases}
\end{align}
\end{proposition}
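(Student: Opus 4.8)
The plan is to use the explicit presentation of $\cSs(\fT)$ given by Theorem~\ref{r.present.fT}. That theorem exhibits $\cSs(\fT)$ as the $\cR$-algebra on the twelve generators in $X$ modulo the relations \eqref{rel1}--\eqref{rel4} together with their images under $\tau$ and $\tau^2$. Since $\beta(\ve,\ve')=\tau(\al(\ve,\ve'))$ and $\gamma(\ve,\ve')=\tau^2(\al(\ve,\ve'))$, any $\tau$-equivariant $\cR$-algebra homomorphism out of $\cSs(\fT)$ is completely determined by the four values $\phi(\al(\ve,\ve'))$, $\ve,\ve'\in\{\pm\}$; hence uniqueness is immediate from \eqref{eq.qt}. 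For existence I would first record that $\tau$ induces the algebra automorphism of $\cY(\fT)$ sending $y_a\mapsto y_b\mapsto y_c\mapsto y_a$ --- well defined because the defining relations of $\cY(\fT)$ are invariant under the cyclic permutation $(a\,b\,c)$ --- and then build $\phi$ from a ring homomorphism $\tilde\phi$ out of the free $\cR$-algebra on $X$ into $\cY(\fT)$.

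Concretely, I would set $\tilde\phi$ on the four elements $\al(\ve,\ve')$ by formula \eqref{eq.qt}, and then define $\tilde\phi(\beta(\ve,\ve')):=\tau(\tilde\phi(\al(\ve,\ve')))$ and $\tilde\phi(\gamma(\ve,\ve')):=\tau^2(\tilde\phi(\al(\ve,\ve')))$; this is forced by $\tau$-equivariance. Because $\tau$ permutes $\al,\beta,\gamma$ cyclically and $\tau^3=\id$ on both $\cSs(\fT)$ and $\cY(\fT)$, the map $\tilde\phi$ genuinely intertwines $\tau$ on the free algebra. Consequently the relation ideal --- which is $\tau$-stable, being generated as an ideal by \eqref{rel1}--\eqref{rel4} together with their $\tau$- and $\tau^2$-images --- is killed by $\tilde\phi$ as soon as $\tilde\phi$ kills \eqref{rel1}, \eqref{rel2}, \eqref{rel2b}, \eqref{rel3} and \eqref{rel4}. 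So the whole existence problem reduces to verifying these five identities in $\cY(\fT)$.

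That verification is a direct computation in the quantum torus $\cY(\fT)$, using the commutation relations among $y_a,y_b,y_c$ and the Weyl-normalization identity \eqref{eq.weyl} (and its cyclic images). Relations \eqref{rel2} and \eqref{rel2b} take place inside the two-variable sub-torus spanned by the variables attached to the two endpoints of $\al$, and reduce essentially to a monomial identity once one checks that the constant $q^{5/2}C^\ve_{\ve'}$ is exactly the defect introduced when re-normalizing a product of Weyl-ordered monomials; relations \eqref{rel1}, \eqref{rel3}, \eqref{rel4} involve all three variables, and here in addition one must match the correction summand --- $\gamma(\ve',\mu)$ in \eqref{rel1}, $\gamma(\ve,\ve')$ in \eqref{rel3}, $\beta(\ve',\ve)$ in \eqref{rel4} --- against the extra $q$-powers produced when the third variable is commuted past the other two. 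The step I expect to be the real obstacle is not any single algebraic manipulation but the bookkeeping: one has to run each identity over all sign patterns $\ve,\ve',\mu,\mu'\in\{\pm\}$, and in particular check that whenever a generator is sent to $0$ (the ``bad'' pattern $(-,+)$ for $\al$, $\beta$ or $\gamma$) the identity still closes up --- the coefficients $C^\ve_{\ve'}$ in Theorem~\ref{r.present.fT} having been arranged precisely so that it does. Once all of \eqref{rel1}--\eqref{rel4} are confirmed, $\tilde\phi$ descends to the desired $\tau$-equivariant $\cR$-algebra homomorphism $\phi:\cSs(\fT)\to\cY(\fT)$, and uniqueness was already noted.
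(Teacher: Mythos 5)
Your proposal is correct and follows the same route as the paper: invoke the presentation of $\cSs(\fT)$ from Theorem~\ref{r.present.fT} and check that the prescribed $\phi$ respects the defining relations (the paper compresses this to a single sentence, ``an easy checking that the definition \eqref{eq.qt} respects all the defining relations''). Your added remark that $\tau$-equivariance of $\tilde\phi$ together with $\tau$-stability of the relation ideal reduces the verification to the five displayed relations \eqref{rel1}--\eqref{rel4} alone is a helpful explication of what the paper leaves implicit.
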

% Here $[c^{\ve} b^{\ve'}]:= q^{\ve \ve'/2} c^{\ve} b^{\ve'}$, which is called the  Weyl normalization of the product $c^{\ve} b^{\ve'}$.
\begin{proof} The proof follows from an easy checking that  the definition \eqref{eq.qt}
respects all the defining relations of $\cSs(\fT)$ described in Proposition \ref{r.present.fT}.
\end{proof}

\def\trD{\mathrm{Tr}_\Delta}

\def\vkD{\varkappa_\D}
\def\vkDp{\varkappa_{\D'}}
\subsection{Quantum trace}
Let $\D $ be a triangulation of a punctured bordered surface $\fS$ and  $\tF=\tF(\D)$
 be the collection of disjoint ideal triangles obtained by splitting $\fS$ along the edges
of $\D$, see Section \ref{sec.tri}. Let $\cE$ be the set of all edges of $\D$,
and $\tE$ be the set of all edges of all triangles in $\tF$.

 Using the triangular decomposition \eqref{eq.tri} and the algebra map $\phi$ of Section \ref{sec.tri1},
 define $\vkD$ as the composition
 $$ \vkD : \cSs \overset {\rho_\D} \longrightarrow \bigotimes _{\fT \in \tF(\D)} \cSs(\fT)
 \overset {\otimes \phi} \longrightarrow  \cY(\tF):=\bigotimes _{\fT \in \tF(\D)} \cY(\fT) .$$

Bonahon and Wong \cite{BW1} constructed an algebra homomorphism (quantum trace map)
$$ \trD: \hcSs(\fS) \to \YD,$$
where $\YD$, a version of the Chekhov-Fock algebra, is an $\cR$-subalgebra of $\cY(\tF)$. For now, we consider $\trD$ as a map with target $\cY(\tF)$. We recall the definition of $\YD$ in Section~\ref{sec.YD}.

 %It turns out that $\vkD$ is equal to the quantum trace map of Bonahon and Wong.

%  It is easy to find a smaller subalgebra of $\bigotimes _{\fT \in \tF(\D)} \cY(\fT)$ which contains the image of $\ttrD$.

\def\hSS{\widehat \cSs(\fS)}
\def\htD{\widehat\vkD}
 \begin{theorem}
 If $\D$ is an ideal triangulation of a punctured bordered surface $\fS$, then
the composition $\widehat\vkD : \widehat \cSs(\fS) \to \cSs(\fS) \overset {\vkD} \longrightarrow \cY(\tF) $ is equal to the quantum trace map of Bonahon and Wong.
 \end{theorem}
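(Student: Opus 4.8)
The plan is to appeal to the uniqueness characterization of the Bonahon--Wong quantum trace map from \cite{BW1}: $\trD$ is the unique $\cR$-algebra homomorphism $\hcSs(\fS)\to\cY(\tF)$ satisfying the \emph{State Sum Property} (compatibility with cutting $\fS$ along the edges of $\D$) and taking the values prescribed by \cite{BW1} on the elementary arcs of a single ideal triangle. Since $\widehat\vkD$ is by definition the composite of the natural quotient $\hcSs(\fS)\onto\cSs(\fS)$ of Remark \ref{rem.hS} with $\vkD=(\otimes\phi)\circ\rho_\D$, and each factor is an $\cR$-algebra homomorphism, $\widehat\vkD$ is itself an $\cR$-algebra homomorphism $\hcSs(\fS)\to\cY(\tF)$. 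So it suffices to check that $\widehat\vkD$ has these two defining properties; I would do this in two steps.

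First, the State Sum Property. By Theorem \ref{thm.1a}(a) the gluing homomorphism $\rho$ attached to identifying two boundary edges is exactly ``split a tangle transverse to the gluing curve and sum over all $\{\pm\}$-states on that curve,'' and by Theorem \ref{thm.1a}(c) the iterated map $\rho_\D:\cSs(\fS)\embed\bigotimes_{\fT\in\tF}\cSs(\fT)$ of \eqref{eq.tri} does not depend on the order in which the edges of $\D$ are cut. Hence $\rho_\D$, and therefore $\widehat\vkD=(\otimes\phi)\circ\rho_\D\circ(\text{quotient})$, is precisely the total state sum over all states on all edges of $\D$. Since Bonahon and Wong's cutting along an edge likewise passes through a biangle neighborhood of that edge and sums over the two states there, our $\rho_\D$ reproduces their iterated biangle contributions, and the State Sum Property holds for $\widehat\vkD$.

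Second, the elementary arc values. Here I would check that $\phi:\cSs(\fT)\to\cY(\fT)$ of \eqref{eq.qt} agrees with Bonahon and Wong's triangle quantum trace on the elementary arcs $\al,\beta,\gamma$. Because $\phi$ is $\tau$-equivariant it is enough to treat $\al(\ve,\ve')$, for which \eqref{eq.qt} gives $\phi(\al(-,+))=0$ and $\phi(\al(\ve,\ve'))=[c^{\ve}b^{\ve'}]$ in all other cases; taking into account the Weyl normalization \eqref{eq.weyl}, these are exactly the monomials in the edge variables that \cite{BW1} assigns to an arc crossing a triangle, and the values on $\beta,\gamma$ follow by applying $\tau$ and $\tau^{2}$. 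This is the step where the difficult calculations of \cite{BW1} are replaced by the presentation of $\cSs(\fT)$: the only substantive point is that formula \eqref{eq.qt} respects \emph{all} of the relations \eqref{rel1}--\eqref{rel4} of Theorem \ref{r.present.fT}, a finite check carried out in the Proposition preceding this theorem; once $\phi$ is known to exist, matching it against Bonahon and Wong's arc values is a matter of inspection.

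The only genuine labor, and the main obstacle, is bookkeeping: pinning down the dictionary between the conventions of \cite{BW1} and of this paper --- the orientation of edges, the direction of increasing height along a boundary edge, the normalization $q$ versus $A$, and the placement of the Chekhov--Fock generators $y_e$, $e\in\tE$, together with the identification $\YD\subset\cY(\tF)$ --- so that \eqref{eq.qt} and the state sum computed by $\rho_\D$ literally coincide with the formulas of \cite{BW1}. I expect no conceptual difficulty beyond this translation, which is why the proof is as short as Theorem \ref{thm.II} promises.
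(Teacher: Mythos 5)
Your proposal is correct and takes essentially the same route as the paper: both appeal to the uniqueness characterization from \cite[Theorem 11]{BW1} (values on the ideal triangle plus the state-sum/cutting compatibility), then observe that the state-sum property of $\widehat\vkD$ is immediate from $\vkDp=\vkD\circ\rho$ and that the triangle values are matched by $\phi$ via \eqref{eq.qt}. One small omission: to verify agreement on the ideal triangle you must check $\widehat\vkD=\trD$ on a generating set of $\hcSs(\fT)$, which includes the \emph{returning} arcs $\delta(\ve,\ve')$ and not only $\al,\beta,\gamma$; the paper handles these via \eqref{eq.arcs} and \eqref{eq.arcs1}, and you should add that check to your second step.
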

 \begin{proof} (i) The case $\fS=\fT$, an ideal triangle with notations of Figure \ref{fig:triangle0}. In this case $\hcSs(\fT)$ is generated by $\al(\ve,\ve')$, the arcs $\delta(\ve,\ve')$, and their images under $\tau, \tau^2$. Here $\delta(\ve,\ve')$ is a returning arc with states $\ve,\ve'$.
 For each of these generators, the image of $\trD$ described in \cite[Theorem 11]{BW1} is exactly the image of $\vkD$ given by \eqref{eq.arcs1} and \eqref{eq.qt}. Hence $\htD=\trD$.

 (ii) Now return to the case of general punctured surfaces.
  Suppose $e_1 , e_2 $ are edges of $\fS$ and $\fS'=\fS/(e_1 =e_2 )$. Then $\fS'$ inherits a triangulation $\D'$ from $\D$,
where the set $\cE'$  of edges of $\D'$ is the same as $\cE$, except that the two edges $e_1 ,e_2 $
of $\cE$ are glued
together, giving an edge $e$ of $\cE'$. Both $\fS$ and $\fS'$ are obtained from the same collection $\tF$ of ideal triangles by  identifications of edges, with $\fS'$ having one more identification.

By \cite[Theorem 11]{BW1}, $\trD$ is uniquely characterized by its values for ideal triangles and the
the following condition: For any such pair $\fS, \fS'$ and any stated $\pfS'$-tangle $\al$ vertically transversal to $e$,
\be
\lbl{eq.con1}
 \tr_{\D'}(\al) =\sum_{\beta}
\trD(\beta),\ee
where $\beta$ runs the set of all lifts of $\al$, see Section \ref{sec.31}. The sequence of maps
$$ \cSs(\fS') \overset \rho \longrightarrow \cSs(\fS) \overset {\rho_\D} \longrightarrow \bigotimes _{\fT \in \tF(\D)} \cSs(\fT)
 \overset {\otimes \phi} \longrightarrow  \cY(\tF)$$
 shows that $\vkDp = \vkD \circ \rho$. Hence, from the definition of $\rho$, we have
 $$
 \widehat{\varkappa_{\D'}} (\al) =\sum_{\beta}
 \htD(\beta),
$$
i.e. $\htD$ also satisfies the above condition \eqref{eq.con1}. This proves $\htD= \trD$.
\end{proof}
  We have seen that the triangular decomposition \eqref{eq.tri} gives a simple proof of the existence of the quantum trace map of Bonahon and Wong \cite{BW1}. %, which was constructed with the help of difficult calculations.
 Because there is no analog of $\rho$ relating $\hcSs(\fS')$ and $\hcSs(\fS)$, in \cite{BW1} the quantum trace map has to be defined directly on $\hcSs(\fS)$, and the proof of well-definedness involves difficult calculations.
 For yet another proof of the existence of the quantum trace map, the reader can consult \cite{Le:QT}.

\subsection{Chekhov-Fock algebra}\lbl{sec.YD}
%For the reader convenience, we recall the definition of the Chekhov-Fock algebra.
 We continue with the notation of the previous subsection.
For each $\fT\in \tF$ we consider $\cY(\fT)$ as a subalgebra of $\cY(\tF)=\bigotimes _{\fT \in \tF(\D)} \cY(\fT)$
 under the natural embedding. Then $\cY(\fT)$ is the
$\cR$-algebra generated by $y_e^{\pm 1}$ with $e \in \tE$,
subject to the relation $y_{e_1} y_{e_2}= y_{e_2} y_{e_1}$ if $e_1$ and $e_2$ are
edges of different triangles, and $ y_{e_1} y_{e_2}= q y_{e_2} y_{e_1}$ if $e_2$
and $e_1$ are edges of a triangle and $e_2$ follows $e_1$ in counterclockwise order.

%Let $\tE$ be the set of all edges of all ideal triangles in $\tF$.
Under the natural projection
 $\pr:\tE \to \cE=\cE(\D)$, an edge $e\in \cE$ has  one or two pre-images in $\tE$; each is called
 a {\em lift} of $e$. For each $e\in \cE$
define $y_a\in \cY(\fT)$ by
 \begin{itemize}
\item If $e$  has a unique lift $e'\in \tE$, then $y_e=y_{e'}$.

\item If $e$  has two lifts $e', e'' \in \tE$, then $y_e = [y_{e'} y_{e''}]$.
\end{itemize}

Let $\YD$ be the $\cR$-subalgebra of $\cY(\fT)$ generated
by $y_a^{\pm 1}, a\in \cE$. Then $\YD$ is a version of the (multiplicative) Chekhov-Fock
algebra of $\fS$ associated with the triangulation $\D$. In \cite{BW1}, it proved that the image of $\trD$ is in $\YD$, which can also be proved easily from the definition of $\vkD$.
%If $\D'$ is another ideal triangulation of $\fS$, then there is a

\def\oD{{\mathring {\D}}}

\section{
Relation with the skein algebra of Muller}
\lbl{sec.Muller}

\def\SP{(\bfS,\cP)}
\subsection{Skein algebra of marked surface} Recall that a  {\em marked surface} $ (\bfS,\cP)$ consists of  a compact oriented 2-dimensional manifold $\bfS$ with (possibly empty) boundary $\pbfS$ and a finite set  $\cP \subset \pbfS$. We recall the definition of the Muller skein algebra \cite{Muller}, following \cite{Le:QT}.

Let $\fS= \bfS \setminus (\cP  \cup \partial'(\bfS))$, where $\partial'(\bfS)$ is the union of all connected components of $\pbfS$ which do not intersect $\cP$. Then $\fS$ is a punctured marked surface.

{\em A $\cP$-tangle} $\al$ is defined just like a $\pfS$-tangle, only with $\partial(\al) \subset \cP\times (0,1)$. More precisely, a {\em $\cP$-tangle} $\al$ is
a compact, framed, properly embedded 1-dimensional non-oriented smooth submanifold $\al$ of $\bfS \times (0,1)$ such that $\partial (\al) \subset \cP\times (0,1)$ and
 at every boundary point of $\al$ the framing is vertical.
 Two  $\cP$-tangles are {\em isotopic} if they are
 isotopic through the class of $\cP$-tangles. Define
 $\SMuller\MN$ to be   the $\cR$-module freely spanned by isotopy classes of
 $\cP$-tangles modulo  the skein relation \eqref{eq.skein},  the trivial loop relation \eqref{eq.loop},
and
 the new {\em trivial arc relation} (see Figure \ref{fig:RelArc}).
 \FIGc{RelArc}{Trivial arc relation}{2cm}

 More precisely, the trivial arc relation says $\al=0$ for any $\cP$-tangle $\al$ of the form $\al= \al' \sqcup a $, where $a\subset \bfS \times (0,1)
    \setminus \al'$ is  an arc with two end points  in $p\times (0,1)$ for some  $p\in \cP$, such that
     $a$ and the part of $p\times (0,1)$ between the two end points of $a$ co-bound a disk in $\bfS \times (0,1)
    \setminus \al'$.

     As usual, the product of two $\cP$-tangles is obtained by stacking the first on top of the second. With this product, $\SMuller\MN$ is an $\cR$-algebra.

\def\tO{\tilde \Omega}
    Let $\cSsp(\fS) \subset \cSs(\fS)$ be the $\cR$-submodule spanned by stated $\pfS$-tangles whose states are all $+$.  For an $\cP$-tangle $\al$ define a stated $\pfS$-tangle $\tO(\al)$ by moving all the boundary points of $\al$  slightly to the left (i.e. along the positive direction of $\partial \bfS$), keeping the same height,  and equipping $\tO(\al)$ with state $+$ at every boundary points. Relations \eqref{eq.skein}--\eqref{eq.arcs} show that $\tO$ descends to a well-define
  $\cR$-linear map
    $$\Omega: \SMuller(\bfS, \cP) \to \cSsp(\fS).$$
  \begin{proposition}
  The map $\Omega$ is an $\cR$-algebra isomorphism.
  \end{proposition}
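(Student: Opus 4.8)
The plan is to prove that $\Omega$ is a bijective homomorphism of $\cR$-algebras: multiplicativity and surjectivity will be geometrically transparent, and injectivity will follow from comparing explicit $\cR$-bases on the two sides.

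First I would upgrade $\Omega$ from the $\cR$-linear map that has already been constructed to an $\cR$-algebra map. The operation $\tO$ pushes all boundary points a short distance along $\pbfS$ in the positive direction, and this commutes with the stacking used to define the products: if $\al$ lies above $\beta$ in $\bfS\times(0,1)$, then after the push the endpoints of $\tO(\al)$ on a given boundary edge still lie above those of $\tO(\beta)$ in height, so $\tO(\al\beta)$ is isotopic to $\tO(\al)\,\tO(\beta)$ as stated $\pfS$-tangles. The push creates no double point, so no power of $q^{1/2}$ intervenes, and $\Omega(\al\beta)=\Omega(\al)\,\Omega(\beta)$; also $\Omega(\emptyset)=\emptyset$. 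For surjectivity, recall that $\cSsp(\fS)$ is spanned by the all-$+$ stated $\pfS$-tangles, and any such tangle can be isotoped so that, on each boundary edge $e$, all of its boundary points are clustered near the marked point at which $e$ starts; the resulting tangle is exactly $\tO(\al)$, where $\al$ is the $\cP$-tangle obtained by prolonging the strands into that marked point. Hence $\im\Omega=\cSsp(\fS)$, the inclusion $\im\Omega\subseteq\cSsp(\fS)$ holding by construction.

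For injectivity I would identify bases. By Muller \cite{Muller}, $\SMuller(\bfS,\cP)$ is free over $\cR$ with basis the isotopy classes of simple $\cP$-tangle diagrams (that these at least span $\SMuller(\bfS,\cP)$ can also be reproved by the usual resolution argument: the skein relation removes crossings, the trivial loop relation removes contractible loops, and the trivial arc relation kills diagrams containing a trivial arc). On the skein-algebra side I claim that $\cSsp(\fS)$ is free with basis $B^{+}(\fS)$, the set of all-$+$ members of $B(\fS)$: these are $\cR$-independent since $B(\fS)$ is a basis of $\cSs(\fS)$ by Theorem \ref{thm.basis}, and they span $\cSsp(\fS)$ because, when one runs the diamond-lemma reduction from the proof of Theorem \ref{thm.basis} on an all-$+$ stated $\pfS$-tangle diagram, the reordering relation \eqref{eq.order} is never triggered — one has $\inv(s)=0$ whenever all states are $+$ — so only \eqref{eq.skein}, \eqref{eq.loop}, \eqref{eq.arcs} occur, and each of these turns an all-$+$ diagram into an $\cR$-combination of all-$+$ simple diagrams (or into $0$). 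It then remains to note that $\tO$ carries a simple $\cP$-tangle diagram to an all-$+$ simple $\pfS$-tangle diagram and induces a bijection between the corresponding sets of isotopy classes, namely onto $B^{+}(\fS)$, with inverse given by sliding the boundary points back onto the marked points. Thus $\Omega$ takes the Muller basis bijectively onto the basis $B^{+}(\fS)$ of $\cSsp(\fS)$, and is therefore an isomorphism; in particular one recovers the linear-independence half of Muller's basis theorem.

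The main obstacle is the last geometric claim: verifying carefully that the dictionary furnished by $\tO$ between simple $\cP$-tangle diagrams and all-$+$ simple $\pfS$-tangle diagrams is a genuine bijection on isotopy classes in both directions. One must check that sliding the endpoints off the marked points onto the adjacent boundary edges neither creates nor destroys crossings, contractible loops or trivial arcs; that the height order of a $\cP$-tangle at a marked point corresponds exactly to the boundary (positive) order on the edge onto which those points are pushed; and that an isotopy between two images $\tO(\al)$ and $\tO(\beta)$ can always be pushed back, near the marked points, to an isotopy of $\cP$-tangles, which uses that those points are deleted from $\fS$ so that strands cannot be isotoped across them. Once this dictionary is in place, the remaining verifications are routine bookkeeping.
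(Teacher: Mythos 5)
Your argument is correct and matches the paper's own (the paper gives the isomorphism-of-presentations argument as its main route and the basis-comparison argument, which you develop in detail, as the alternative): you identify the all-$+$ subset $B^{+}(\fS)$ of the basis $B(\fS)$ from Theorem \ref{thm.basis} with the Muller basis of simple $\cP$-tangle diagrams via $\tO$, and check that the diamond-lemma reduction on an all-$+$ diagram never invokes \eqref{eq.order}. The only point worth flagging is that your proof of ``\,$B^{+}(\fS)$ spans $\cSsp(\fS)$\,'' by tracking the reduction is exactly what justifies the paper's terser claim that $\cSsp(\fS)$ has a presentation using only the all-$+$ relations; the two formulations amount to the same verification.
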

  \begin{proof} It is easy to see that $\cSsp(\fS)$ is the $\cR$-module freely spanned by $\pfS$-tangles with $+$ states, subject to those relations from \eqref{eq.skein}--\eqref{eq.order} which involve only $+$ states; namely relations \eqref{eq.skein}, \eqref{eq.loop}, and the middle relation of \eqref{eq.arcs}. Since $\tO$ maps the set of isotopy classes of $\cP$-tangles isomorphically onto the set of isotopy classes of $+$ stated $\pfS$-tangles, and maps the defining relations of $\SMuller\MN$ onto the defining relations of $\cSsp(\fS)$, it induces an isomorphism $\Omega: \SMuller(\bfS, \cP) \to \cSsp(\fS).$

  Alternatively, the
  $\cR$-basis of $\SMuller(\bfS,\cP)$,  given explicitly in \cite[Lemma 4.1]{Muller}, is mapped by $\Omega$ to  the $\cR$-basis of $\cSsp(\fS)$ given in Theorem \ref{thm.basis}, with all $+$ states. This shows $\Omega$ is an  isomorphism. \end{proof}
\def\bvkD{\bar \varkappa_\D}
 \def\oE{\mathring {\cE}}
\def\ooE{\mathring E}

\subsection{Proof of Theorem \ref{thm.inj}}
 \begin{proof}   Recall that  we define the  quantum trace map on $\SMuller(\bfS,\cP)$
   $$ \bvkD: \SMuller(\bfS,\cP) \overset \Omega \longrightarrow  \cSsp(\fS)  \overset \vkD \longrightarrow  \YD.$$

Number the set $\cE$ of edges of $\D$ so that $\cE=\{e_1,\dots,e_n\}$.  For $\bk=(k_1,\dots,k_n)\in \BZ^n$ let
$$ y^{\bk}:= (y_{e_1})^{k_1} (y_{e_2})^{k_2} \dots (y_{e_n})^{k_n} \in \YD.$$
 The set $\{ y^{\bk} \mid \bk\in \BZ\}$ is an $\cR$-basis of $\YD$. We order all $y^{\bk}$ using the lexicographic order of $\bk\in \BZ^n$, and use this order to define the {\em leading term} $\ld(x)$ of any $0 \neq x\in\YD$.
\def\ptS{\partial \tilde{\fS}}
\def\tS{\tilde {\fS}}

Let $\tS= \bigsqcup_{\fT\in \tF} \fT$, and $\pr: \tS \to \fS$ be the natural projection.
Suppose $D $ is a simple $\pfS$-tangle diagram which is {\em $\D$-normal}, i.e. it is $e$-normal for all edge $e$ of $\D$ (see Proof of Theorem \ref{thm.1a}(b)). Then $\tD:=\pr^{-1}(D )$ is a $\ptS$-tangle diagram consisting of non-returning arcs in ideal triangles in $\tF$. Define $\bk_D =(k_{D ,1},\dots,k_{D ,n})\in \BZ^n$, where $k_{D ,i} = |D  \cap e_i|$. Then $\bk_D $ totaly determines the isotopy class of $D $. Every
 simple $\pfS$-tangle diagram is isotopic to a $\D$-normal one.

An edge $e$ of $\D$ is {\em interior} if it is not a boundary edge of $\fS$. Let $\ooE$ be the union of all the interior edges of $\D$.
Fix  an orientation of $\ooE$ and provide the boundary edges of $\fS$ with the positive orientation.
Lift these orientations to edges of $\tE$, and get an orientation $\ori$ of $\partial\tS$.
Suppose $D $ is a positively ordered, $\D$-normal, simple $\pfS$-tangle diagram, with $+$ states. For  every map $r:D  \cap \ooE \to \{\pm\}$, let $\tD(r)$ be $\tD$ with $\ori$-order, and states defined by  the lift of $r$ on $\ooE$ and $+$ on all other edges. From the definition, we have the following state sum
\be
\lbl{eq.state}
\vkD(D ) = \sum_{r:D  \cap \ooE \to \{\pm\}} \phi(\tD(r)).
\ee
Let $r_+$ be the map $r_+:D  \cap \ooE \to \{+\}$. Since $\tD(r_+)$ consists of non-returning arcs with $+$ states, \eqref{eq.qt} and Lemma \ref{r.tri3} show that $\phi(\tD(r_+))$ is non-zero and
$$\ld(\phi(\tD(r_+)))\bequal y^{\bk_D }.$$
More over  \eqref{eq.qt} and Lemma \ref{r.tri3} show that the leading term of any $\phi(\tD(r))$, with $r \neq r_+$, is smaller than that of $\phi(\tD(r_+))$. Hence
   \be
    \ld(\vkD(D )) \bequal y^{\bk_D }.
    \ee
   From here it is easy to see that the image under $\vkD$
 of a non-trivial linear combination of $+$ stated, positively ordered, $\D$-normal,  simple $\pfS$-tangle diagrams is non-zero, which proves that $\bvkD$ is injective.
  \end{proof}

%%%%%%%%%%%%%%%%%%%%%%%%%%%%%%%
%%%%%%%%%%%%%%%%%%%%%%%%%%%%%%%

\end{document}